\def\e{\varepsilon}
\def\g{\gamma}
\def\G{\Gamma}
\def\l{\lambda}
\def\a{\alpha}
\def\b{\beta}
\def\d{\delta}
\def\L{\Lambda}
\def\vp{\varphi}
\def\H{W_2}
\def\dH{\dot{W}_2}
\def\Hloc{W_{2,loc}}
\def\Hinf{W_\infty}
\def\dC{\mathfrak{C}}
\def\Th{\Theta}
\def\iu{\mathrm{i}}
\def\Op{\mathcal{H}}
\def\cL{\mathcal{L}}
\def\cS{\mathcal{S}}
\def\H{W_2}
\def\Hloc{W_{2,loc}}
\def\Hinf{W_\infty}
\def\cA{\mathcal{A}}
\def\cR{\mathcal{R}}
\def\cC{\mathcal{C}}
\def\cI{\mathcal{I}}
\def\cP{\mathcal{P}}
\def\cU{\mathcal{U}}
\def\cQ{\mathcal{E}}
\def\cV{\mathcal{V}}
\def\cL{\mathcal{L}}
\def\rA{\mathrm{A}}
\def\rB{\mathrm{B}}
\def\rU{\mathrm{U}}
\def\rI{\mathrm{I}}
\def\rS{\mathrm{S}}
\def\rY{\mathrm{Y}}
\def\rP{\mathrm{P}}
\def\rQ{\mathrm{Q}}
\def\rK{\mathrm{K}}
\def\rf{\mathrm{g}}
\def\ed{\mathrm{e}}
\def\ra{\mathrm{a}}
\def\rb{\mathrm{b}}
\def\rT{\mathrm{T}}
\def\rL{\mathrm{L}}
\def\rc{\mathrm{c}}
\def\rR{\mathrm{R}}
\def\rZ{\mathrm{Z}}
\def\Ups{\Upsilon}
\def\ry{\mathrm{y}}
\def\rh{\mathrm{h}}
\def\rG{\mathrm{G}}
\def\rx{\mathrm{x}}
\def\rX{\mathrm{X}}
\def\rE{\mathrm{E}}
\def\sp{\mathsf{p}}
\def\sq{\mathsf{q}}
\DeclareMathOperator{\rank}{rank}
\DeclareMathOperator{\IM}{Im}
\DeclareMathOperator{\diag}{diag}
\numberwithin{equation}{section}
\newtheorem{theorem}{Theorem}[section]
\newtheorem{lemma}{Lemma}[section]
\newtheorem{remark}{Remark}[section]
\begin{document}

\allowdisplaybreaks

\title{Analyticity of resolvents of elliptic operators on quantum graphs with small edges}

\date{\empty}

\author
{D.I. Borisov}

\maketitle

\begin{center}
{Institute of Mathematics, Ufa Federal Research Center, Russian Academy of Sciences, Ufa, Russia,
\\
\&
\\
Bashkir State
University,
Ufa, Russia, \\
\&
\\
University of Hradec Kr\'alov\'e, Hradec Kr\'alov\'e, Czech Republic
 \\
{\tt borisovdi@yandex.ru}}
\end{center}

\begin{abstract}
We consider an arbitrary metric graph, to which we glue another graph with edges of lengths proportional to $\e$, where $\e$ is a small positive parameter. On such graph, we consider a general self-adjoint second order differential operator $\Op_\e$ with varying coefficients subject to general vertex conditions; all coefficients in differential expression and vertex conditions are supposed to be analytic
in $\e$. We introduce a special operator on a certain graph obtained
by rescaling the aforementioned small edges and assume that it has no embedded eigenvalues at the threshold of its essential spectrum. Under such assumption, we show that certain parts of the resolvent of $\Op_\e$ are analytic in $\e$. This allows us to represent the resolvent of $\Op_\e$ by an uniformly converging Taylor-like series and its partial sums can be used for approximating the resolvent up to an arbitrary power of $\e$. In particular, the zero-order approximation reproduces recent convergence results by G. Berkolaiko, Yu. Latushkin, S. Sukhtaiev and by C. Cacciapuoti, but we additionally show that next-to-leading terms in $\e$-expansions of the coefficients in the differential expression and vertex conditions can contribute to the limiting operator producing the Robin part at the vertices, to which small edges are incident. We also discuss possible generalizations of our model including both the cases of a more general geometry of the small parts of the graph and a non-analytic $\e$-dependence of the coefficients in the differential expression and vertex conditions.
\\
\\
Keywords: graph, small edge, resolvent, analyticity, Taylor series, approximation
\end{abstract}

\section{Introduction}

The theory of quantum graphs or, in other words, the spectral theory of differential operators on graphs is actively developing nowadays and a rather new direction is devoted to studying quantum graphs with small edges. One of early studied problems came from the homogenization theory and it was devoted to models of woven membrane kind, see \cite{Pok}, \cite{Zh}. The well-known result states that an elliptic operator on a graph looking as a net with small edges converge to a two-dimensional operator on an Euclidean domain approximated in a natural sense by such graph.

Another situation, which attracted quite a lot of attention, concerned the graphs with finitely many small edges. As a rather early publication on this subject we mention paper \cite{CET}, in which it was shown that a general vertex condition in a graph can be approximated in the norm resolvent sense via ornamenting by small edges supporting a magnetic field and with delta-coupling at the vertices.

The case of a general graph with finitely small edges was addressed in \cite{BLS-AdvMath2019}. On such graph, a Schr\"odinger operator subject to general vertex conditions was considered. The main result stated that under a certain non-resonance condition, see Condition~3.2 in \cite{BLS-AdvMath2019}, the considered operator converged in the norm resolvent sense to a Schr\"odinger operator on the limiting graph without small edges, which were replaced by certain 
conditions at the limiting vertices. The vertex conditions in \cite{BLS-AdvMath2019} were described in terms of sympletic geometry and Lagrangian planes; the limiting vertex conditions were also formultated in the same terms. A similar study was made in \cite{Ital}, where there was considered a star graph, in which the vertex was replaced by a small rescaled finite graph. The operator on this perturbed graph was designed so that after the rescaling the small graph back to a finite size, the differential expression and vertex conditions turned out to be independent of a small parameter characterizing the size of the small graph. The main result of \cite{Ital} provided the leading terms in the asymptotics for the perturbed resolvent and the estimates for the error term. The limiting condition at the vertex of the limiting star graph contained only the Neumann and Dirichlet parts.

The above described results motivated further studies of the graphs with small edges
aimed on better understanding and describing the behavior of the resolvents. A natural question, apart of determining the limiting operator, is to find several next terms in the asymptotics for the resolvent or even its complete asymptotic expansion. This question was addressed in few recent papers~\cite{Bor1},~\cite{Bor2},~\cite{Bor3} for some toy models represented by very simple graphs. A main feature of the considered models was the possibility of singular dependence of the coefficients in the differential expression on a small parameter governing the lengths of small edges. The main results obtained for these models stated that the resolvents were analytic
in certain sense with respect to the small parameter and in the limit, the condition at the vertex to which the small edges shrank, could involve a Robin part. This Robin part was generated by the next-to-leading terms in the expansions of the coefficients of the differential expression with respect to the small parameter. For the resolvents, several first terms in their asymptotic expansions were found. The analytic
dependence of the resolvent on the small parameter was in some sense surprising since small edges is an example of a singular perturbation and usually, while dealing with singular perturbation, one can hope only to find an asymptotic expansion for the resolvent but not to prove that this expansion is a converging Taylor series.

The present work is motivated by papers \cite{BLS-AdvMath2019}, \cite{Ital}. We consider a general graph $\G$, to which a small graph $\g_\e$ is glued. This small graph is obtained by rescaling a given graph $\g$ into $\e^{-1}$ times, where $\e$ is a small parameter, see Figures~\ref{fig1},~\ref{fig2}. On this perturbed graph with small edges we define a second order scalar differential operator $\Op_\e$ depending on $\e$. The differential expression is of a general form involving first order terms and varying coefficients. The vertex conditions are also of a general matrix form with arbitrary not necessarily unitary matrix coefficients. All coefficients in the differential expression and the vertex conditions depend arbitrarily on $\e$. For the coefficients in the vertex conditions and in the differential expression on non-small edges in $\G$, this dependence is analytic
in $\e$, while the coefficients in the differential expression on the small edges in $\g_\e$ are meromorphic in $\e$. On the coefficients in the differential expression and the vertex conditions of $\Op_\e$ we impose conditions ensuring the self-adjointness of this operator. Then we attach finitely many leads to the graph $\g$ and denote such extended graph by $\g_\infty$, see Figure~\ref{fig2}. On the graph $\g_\infty$ we consider a self-adjoint operator $\Op_\infty$ with differential expression and vertex conditions generated in a certain way by those of the operator $\Op_\e$. Our main assumption is that the operator $\Op_\infty$ has no embedded eigenvalues at the bottom of its essential spectrum; this does not exclude the presence of possible virtual levels at the same bottom. This assumption is of a similar nature as Non-resonance condition in work \cite{BLS-AdvMath2019}. Under this condition, our main result states that the resolvent of the operator $\Op_\e$ is in some sense analytic
in $\e$. Namely, we consider the restrictions of this resolvent to the graphs $\G$ and $\g_\e$ and the latter restriction then is sandwiched between the rescaling operators; such parts of the resolvent turn out to be analytic in $\e$. As a corollary, this provides a Taylor-like series for the resolvent. All these results are established for the resolvent of $\Op_\e$ regarded not only as operator acting in $L_2$, but also as acting from $L_2$ into the Sobolev spaces $W_2^2$ and into the spaces of continuous functions $C^2$.

The mentioned Taylor-like series for the resolvent allows us to find a limiting operator on the graph $\G$, the resolvent of which approximates the resolvent of $\Op_\e$ in the sense of the above norms. This is a convergence result as in \cite{BLS-AdvMath2019} and \cite{Ital} but for our general model. Similar to the toy models in~\cite{Bor1},~\cite{Bor2},~\cite{Bor3}, our general model also has a special feature related with the limiting vertex condition at $M_0$. Namely, we show that the next-to-leading terms in the $\e$-expansions of the coefficients in the differential expression and vertex conditions of $\Op_\e$ generate a Robin part in the limiting vertex conditions at a vertex $M_0$, through which the graph $\g_\e$ is glued to the graph $\G$. Such case was excluded by the formulation of the problem in \cite{Ital}. In \cite{BLS-AdvMath2019}, the Robin part could appear in the limit, but only due to
the fact that, in our terms, the conditions at the vertices of the small edges were independent of $\e$.

Finally, we also conclude that despite the small edges are a singular perturbation, which can not be treated by means of the regular perturbation theory \cite{Kato},
the resolvent of the operator $\Op_\e$ depends analytically
in $\e$ and behaves similar to the situation, when the edges vary but do not vanish. The case with non-vanishing edges was considered in \cite{BK} and the main result stated that the resolvent of the Schr\"odinger operator on such graphs depended analytically on the edges lengths and the matrices involved in the vertex conditions.

\begin{figure}
\begin{center}
\includegraphics[scale=0.7]{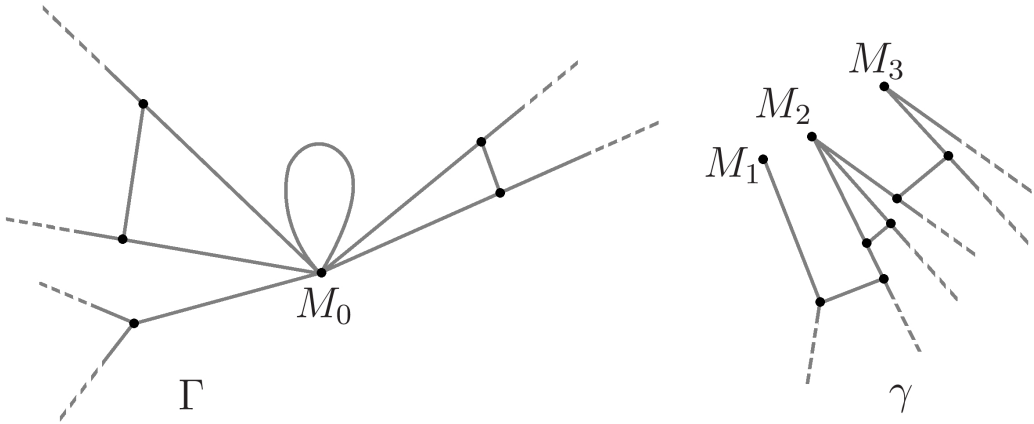}
\end{center}
\caption{{\small Examples of initial graphs $\G$ and $\g$. The edges incident to $M_0$
are split into three groups ($n=3$).}}\label{fig1}
\end{figure}

In conclusion of this section, let us briefly describe the structure of the paper. In the next section we formulate the problem, our main assumptions and the main results. In the third section we discuss how far our results can be extended to the case, when several small graphs are glued to the graph $\G$, see Figure~\ref{fig3}, including the case of the dependence on several independent small parameters. And we also discuss the case, when the assumed analyticity
in the small parameter for the coefficients in the differential expression and vertex conditions is replaced by the infinite differentiability in $\e$ or just by the existence of power asymptotic expansions.
In the fourth section we discuss the self-adjointness of the operator we deal with. In the fifth section we introduce and study some auxiliary operator on $\g$, which is employed then in proving the main results in the sixth section.

\section{Problem and main results}

\subsection{Problem}\label{sec2.1}

Let $\G$ be a finite metric graph having no isolated vertices. The lengths of its edges can be both finite or infinite. We fix arbitrarily a vertex in the graph $\G$ and denote it by $M_0$. The degree of this vertex is finite, that is, there exist finitely many edges $\ed_i$, $i=1,\ldots,d_0$ incident to $M_0$. Each loop in the family $\{\ed_i\}_{i=1,\ldots,d_0}$ is counted twice. By $\g$ we denote one more graph, which is supposed to be finite, metric, having only edges of finite lengths and no isolated vertices or edges, see Figure~\ref{fig1}.

\begin{figure}
\begin{center}
\includegraphics[scale=0.7]{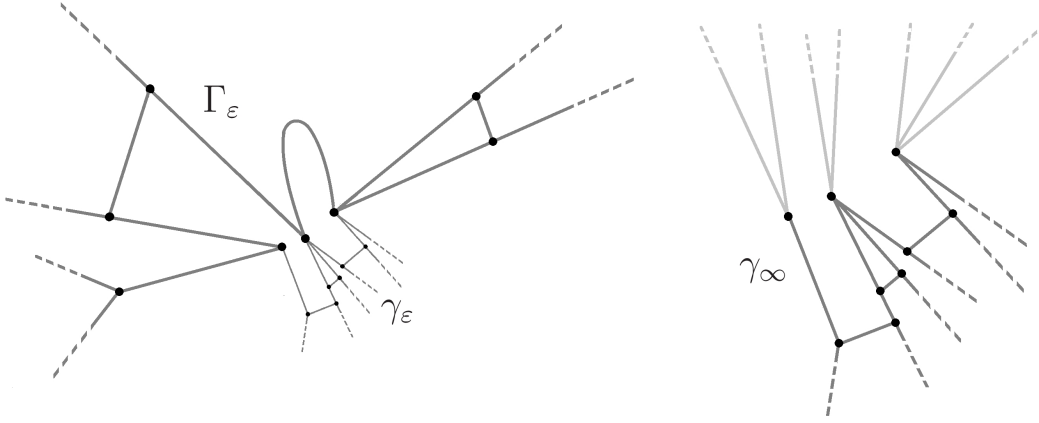}
\end{center}
\caption{{\small Graph $\G_\e$ with a glued small graph and graph $\g_\infty$. The small edges in the graph $\G_\e$ are indicated by thin lines. The leads in the graph $\g_\infty$ are of light gray color.}}
\label{fig2}
\end{figure}

Letting $\e$ to be a small positive parameter, we scale each edge of the graph $\g$ by $\e$ and the resulting graph is denoted by $\g_\e$. More precisely, the vertices of the graph $\g_\e$ are the same as those of $\g$, while each edge $\ed\in\g$ is replaced by the edge of the length $\e|\ed|$.

We chose arbitrary vertices $M_j$, $j=1,\ldots,n$, in the graph $\g_\e$ and then partition the aforementioned edges $\ed_i$, $i=1,\ldots,d_0$, in the graph $\G$ into $n$ non-empty groups $\{\ed_i\}_{i\in J_j}$, $j=1,\ldots,n$, where $J_j$ are some non-empty disjoint sets of indices and $\bigcup\limits_{j=1}^{n} J_j=\{1,\ldots,d_0\}$, $1\leqslant n\leqslant d_0$. Then
we replace the vertex $M_0$ in the graph $\G$ by its $n$ copies, one copy for each group $\{\ed_i\}_{i\in J_j}$. Finally, we introduce a graph, which will be the main object of our study, as the union of the graphs $\G$ and $\g_\e$, where the vertices $M_j$, $j=1,\ldots,n$, are identified with the aforementioned copies of the vertex $M_0$ in the graph $\G$, see Figure~\ref{fig2}. After the described gluing of the graphs $\G$ and $\g_\e$, the edges $e_i$, $i=1,\ldots,d_0$, are no longer connected at the vertex $M_0$ but instead, each group $\{\ed_i\}_{i\in J_j}$ is incident to the vertex $M_j$ in the graph $\g_\e$.
Hereafter, we shall often identify the graphs $\G$ and $\g_\e$ with the corresponding subgraphs in $\G_\e$. In particular, in the sense of this identification, each function defined on $\G_\e$ is also supposed to be defined on $\G$ and $\g_\e$ and vice versa.
On each edge in $\G$ and $\g$ we fix a direction and consequently, a variable on it. Then the chosen direction is naturally transferred to the graph $\G_\e$.

The main object of our study is an unbounded operator $\Op_\e$ in $L_2(\G_\e)$ with a differential expression $\hat{\Op}(\e)$, the action of which is defined as
\begin{equation}\label{2.1}
\hat{\Op}(\e)u:=-\frac{d\ }{dx} p_\e\frac{d u}{dx} + \iu \left(
\frac{d\ }{dx}(q_\e u) + q_\e\frac{du}{dx}
\right)+ V_\e u,
\end{equation}
where $\iu$ is the imaginary unit and the coefficients are 
\begin{equation}\label{2.1a}
\begin{gathered}
p_\e:=\left\{
\begin{aligned}
&p_\G(\,\cdot\,,\e) &&\text{on}\quad \G,
\\
\cS_\e &p_\g(\,\cdot\,,\e) &&\text{on}\quad\g_\e,
\end{aligned}\right.
\qquad
q_\e:=\left\{
\begin{aligned}
&q_\G(\,\cdot\,,\e) &&\text{on}\quad \G,
\\
\e^{-1}&\cS_\e q_\g(\,\cdot\,,\e) \quad&&\text{on} \quad \g_\e,
\end{aligned}\right.
\\
V_\e:=\left\{
\begin{aligned}
&V_\G(\,\cdot\,,\e) &&\text{on}\quad \G,
\\
\e^{-2}&\cS_\e V_\g(\,\cdot\,,\e) &&\text{on}\quad\g_\e.
\end{aligned}\right.
\end{gathered}
\end{equation}
Here $p_\G=p_\G(\,\cdot\,,\e)\in \Hinf^1(\G)$, $q_\G=q_\G(\,\cdot\,,\e)\in \Hinf^1(\G)$, $V_\G=V_\G(\,\cdot\,,\e)\in L_2(\G)$ and $p_\g=p_\g(\,\cdot\,,\e)\in\Hinf^1(\g)$, $q_\g=q_\g(\,\cdot\,,\e)\in\Hinf^1(\g)$, $V_\g=V_\g(\,\cdot\,,\e)\in L_2(\g)$ are some real   functions defined on the graphs $\G$ and
$V_\g$ 
and these functions are analytic
in $\e$ in the sense of the 
norms in the mentioned spaces.
The symbol $\cS_\e$ stands for a linear operator mapping $L_2(\g)$ onto $L_2(\g_\e)$ by the rule
\begin{equation*}
(\cS_\e u)(x):=u\left(\frac{x}{\e}\right)\quad\text{as}\quad x\in\ed_\e
\end{equation*}
on each edge $\ed_\e$ in the graph $\g_\e$.
 We also assume an uniform ellipticity condition for the expression $\hat{\Op}(\e)$, namely, the existence of a fixed constant $c_\Op>0$ independent of $x\in\G$, $\xi\in\g$ such that
\begin{equation*}
p_\G(x,0)\geqslant c_\Op\quad\text{a.e. on}\quad\G,\qquad
p_\g(\xi,0)\geqslant c_\Op\quad\text{a.e. on}\quad\g
\end{equation*}
Thanks to the assumed analyticity
in $\e$, this condition implies the same condition for $p_\G(x,\e)$ and $p_\g(x,\e)$ for all sufficiently small $\e$ with the constant $c$ replaced by $c/2$.

The vertex conditions on the graph $\G_\e$ are imposed as follows. Let $M$ be an arbitrary vertex in $\G_\e$ of a degree $d(M)>0$
and $\ed_i(M)$, $i=1,\ldots,d(M)$, be the edges incident to this vertex and $u_i:=u\big|_{\ed_i(M)}$, $i=1,\ldots,d(M)$, be the restrictions of a function $u$ to the edges $\ed_i(M)$. We introduce two $d(M)$-dimensional vectors
\begin{equation}\label{2.21}
\cU_M(u):=
\begin{pmatrix}
u_1(M)
\\
\vdots
\\
u_{d(M)}(M)
\end{pmatrix},\qquad \cU_M'(u):=
\begin{pmatrix}
\frac{d u_1}{dx_1}(M)
\\
\vdots
\\
\frac{d u_{d(M)}}{dx_{d(M)}}(M)
\end{pmatrix},
\end{equation}
where $x_i$ is the variable on the edge $\ed_i$; this variable is chosen according the above discussed direction. At each vertex $M\in\G_\e$ of a degree $d(M)>0$ we impose a general vertex condition
\begin{equation}\label{2.9}
\rA_M(\e)\cU_M(u)+\rB_M(\e)\cU_M'(u)=0,
\end{equation}
where $\rA_M(\e)$ and $\rB_M(\e)$ are some matrices of size $d(M)\times d(M)$ analytic
in $\e$.

The matrices $\rA_M(\e)$ and $\rB_M(\e)$ in conditions (\ref{2.4a}) are defined non-uniquely up to the left multiplication by an arbitrary non-degenerate $d(M)\times d(M)$ matrix. In view of this fact, we denote $r(M):=\rank \rB_M(0)$ and without loss of generality we additionally assume that for each $M\in \G_\e$, the first $r(M)$ rows in the matrix $\rB_M(0)$ are linearly independent and the other rows are zero, while each row among last $d(M)-r(M)$ ones in the matrix $\rA_M(0)$ is non-zero.

The domain of the operator $\Op_\e$ consists of the functions in $\dH^2(\g_\e)$ satisfying the imposed vertex conditions; hereinafter, for an arbitrary graph we denote $\dH^j(\,\cdot\,):= \bigoplus\limits_{\ed\in\,\cdot\,} \H^j(\ed)$, $j=1,2$. The action of the operator $\Op_\e$ on such functions is defined by differential expression (\ref{2.1}).

 We want the introduced operator $\Op_\e$ to be self-adjoint and this is ensured by the two following assumptions. The first of them says that the matrix
\begin{equation}\label{2.4a}
\rA_M(\e)\Pi_M^{-1}(\e)\rB_M^*(\e)+\iu \rB_M(\e)\Pi_M^{-1}(\e) \Th_M(\e)\Pi_M^{-1}(\e)\rB_M^*(\e)
\end{equation}
is self-adjoint, where
\begin{equation}\label{2.4b}
\begin{aligned}
&\Pi_M(\e):=\diag\big\{\nu_i(M) p_\e\big|_{\ed_i(M)}(M)\big\}_{i=1,\ldots,d(M)},
\\
&
\Th_M(\e):=\diag\big\{\nu_i(M) q_\e\big|_{\ed_i(M)}(M)\big\}_{i=1,\ldots,d(M)},
\end{aligned}
\end{equation}
and $\ed_i(M)$ are the edges incident to the vertex $M$, while $\nu_i(M):=1$ if the inward direction on the edge $\ed_i(M)$ at the vertex $M$ coincides with the above chosen direction on this edge and $\nu_i(M):=-1$ otherwise.

 The second assumption is the rank condition
$\rank\big(\rA_M(0)\ \; \rB_M(0)\big)=d(M)$ imposed at each vertex $M\in\G_\e$. In view of the above conditions for the structure of matrices $\rA_M$ and $\rB_M$, this condition
is obviously equivalent to
\begin{equation}\label{2.9rank}
\rank\big(\rA_M (\e) \ \;\rB_M(\e)\big)=d(M)
\end{equation}
for all sufficiently small $\e$.

\subsection{Notation and assumption}

To formulate our main results, we need some additional notations. By $\g_\infty$ we denote a graph obtained by attaching leads (edges of infinite lengths) $\ed_i^\infty$, $i\in J_j$, $j=1,\ldots,n$, to each vertex $M_j$, $j=1,\ldots,n$, in the graph $\g$; the vertex $M_j$ serves as the origin for the attached edges $\ed_i$. The variable on the graph $\g_\infty$ is denoted by $\xi$.

 The introduced graph appears in a very natural way. Namely, we consider the subgraph $\g_\e$ in $\G_\e$ with the edges $\ed_i$ incident to the vertices $M_j$. Then we rescale this subgraph and the edges by means of $\e$ so that the graph $\g_\e$ turns into the graph $\g$. Under such rescaling, the edges $\ed_i$ become ones with lengths of order $O(\e^{-1})$ and in the limit $\e\to+0$, we just replace them by the leads $\ed_i^\infty$.

The described scaling corresponds to the change of the variables $\xi=x\e^{-1}$ and we need to see how the part of the operator $\Op_\e$ corresponding to $\g_\e$ is transformed under such change. Namely, we first
replace the coefficients of the differential expression $\hat{\Op}_\e$ on the edges $\ed_i$ by their values at $M_0$. Then we multiply the obtained differential expression by $\e^2$, pass to the variable $\xi$ and $\e=0$. This gives rise to the differential expression
\begin{equation}\label{2.7}
\begin{aligned}
&\hat{\Op}_\infty u:=-\frac{d\ }{d\xi}p_\g(\,\cdot\,,0)\frac{d u}{d\xi}
+\iu\left(\frac{d\ }{d\xi}(q_\g(\,\cdot\,,0)u)+q_\g(\,\cdot\,,0)\frac{du }{d\xi}\right)
+V_\g(\,\cdot\,,0)u\quad\text{on}\quad\g,
\\
&\hat{\Op}_\infty u:=-\sp_i(0)\frac{d^2 u}{d\xi^2} \quad\text{on}\quad \ed_i^\infty,\quad i\in J_j,\quad j=1,\ldots,n,\qquad
 \sp_i(\e):=p_\G\big|_{\ed_i}(M_0,\e).
\end{aligned}
\end{equation}
Then we do a similar rescaling in the vertex conditions, but we multiply them by
\begin{equation*}
\tilde{\rE}_M(\e):=
 \begin{pmatrix}
 \rI_{r(M)} & 0
 \\
 0 & \e^{-1} \rI_{d(M)-r(M)}
 \end{pmatrix}.
\end{equation*}
We see that
\begin{align}\label{2.60a}
&\e\tilde{\rE}_M(\e) \rA_M(\e)=\rA_M^{(0)}
+O(\e),
 \qquad
\tilde{\rE}_M(\e) \rB_M(\e)=\rB_M^{(0)}
+O(\e),\quad
\\
&\rA_M^{(0)}:=
\begin{pmatrix}
0
\\
\rA_M^-(0)
\end{pmatrix}, \qquad \rB_M^{(0)}:=
\begin{pmatrix}
\rB_M^+(0)
\\
\frac{d\rB_M^-}{d\e}(0)
\end{pmatrix}\label{2.11b}
\end{align}
and $\rA_M^-(\cdot)$ and $\rB_M^-(\cdot)$ are the matrices formed by last $d(M)-r(M)$ rows of respectively the matrices $\rA_M(\cdot)$ and $\rB_M(\cdot)$, while a matrix $\rB_M^+(\cdot)$ is formed by first $r(M)$ rows of the matrix $\rB_M$.
Passing then to the limit as $\e\to+0$, we end up with the following vertex conditions:
\begin{equation}\label{2.18}
\rA_M^{(0)}\cU_M(u)+\rB_M^{(0)}\cU_M'(u)=0\quad\text{at each}\quad M\in\g_\infty,
\end{equation}
where the vectors $\cU_M(u)$ and $\cU_M'(u)$ are introduced as in (\ref{2.21}) with the derivatives $\frac{d u_i}{dx_i}$ substituted by $\frac{d u_i}{d\xi_i}$.

The described procedure gives rise to an auxiliary operator $\Op_\infty$ on the graph $\g_\infty$ with differential expression (\ref{2.7}) subject to the vertex conditions (\ref{2.18}).
 The domain of the operator $\Op_\infty$ consists of the functions in $\dH^2(\g)$
satisfying the imposed vertex conditions. It will be shown later, see Section~\ref{sec6.1}, that the operator $\Op_\infty$ is self-adjoint.

Since the graph $\g$ is finite and all its edges are of finite lengths, the only edges of infinite length in the graph $\g_\infty$ are the leads $\ed_i^\infty$ attached to the vertices $M_j$, $j=1,\ldots,n$. The differential expression of the introduced operator $\Op_\infty$ on these leads is just the negative Laplacian multiplied by non-zero constants $\sp_i$, see (\ref{2.7}). This is why it is straightforward to confirm that the essential spectrum of the operator $\Op_\infty$ is the half-line $[0,+\infty)$.

The main condition we assume in the work is as follows.
\begin{enumerate}
\def\theenumi{(A)}
\item\label{C1} \textsl{The operator $\Op_\infty$ has no embedded eigenvalue at the bottom of its essential spectrum.}
\end{enumerate}

At the bottom of its essential spectrum, the operator $\Op_\infty$ can have virtual level, namely, there can be a bounded non-trivial solution $\psi\in\dH^2(\g)
\oplus \bigoplus\limits_{i\in J_j,\ j=1,\ldots,n} \Hloc^2(\ed_i^\infty)$ of the boundary value problem
\begin{equation}\label{2.20}
\hat{\Op}_\infty \psi=0\quad\text{on}\quad \g_\infty,\qquad \rA_M^{(0)} \cU_M(\psi)+\rB_M^{(0)}\cU'_M(\psi)=0\quad \text{at each}\quad M\in\g_\infty.
\end{equation}
Again in view of the definition of the differential expression $\hat{\Op}_\infty$ on the leads $\ed_i^\infty$, the aforementioned boundedness condition for $\psi$ is equivalent to the identities
$\psi=const$ on $\ed_i^\infty$, $i\in J_j$, $j=1,\ldots,n$,
where $const$ stands for some constants depending in general on the choice of the edge $\ed_i^\infty$.

Given a function $u$ defined and continuous
on the edges $\ed_i^\infty$ in the vicinity of the points $M_j$, we denote
\begin{equation*}
\cU_\g(u):=
\begin{pmatrix}
u\big|_{\ed_i^\infty}(M_j)
\end{pmatrix}_{i\in J_j,\ j=1,\ldots,n}
\end{equation*}
where $u\big|_{\ed_i^\infty}$ is the restriction of $u$ to the edge $\ed_i^\infty$.
In view of the said above, Condition~\ref{C1} is equivalent to the following: each non-trivial solution $\psi$ to problem (\ref{2.20}) satisfies
\begin{equation}\label{2.23}
\cU_\g(\psi)\ne0.
\end{equation}

Let $\psi^{(j)}$, $j=1,\ldots,k$, be linearly independent bounded non-trivial solutions to problem (\ref{2.20}) satisfying condition (\ref{2.23}). It is clear that $k\leqslant d_0$ since once $k>d_0$ it is possible to find a linear combination of the functions $\psi^{(j)}$, for which condition (\ref{2.23}) is violated. If the operator $\Op_\infty$ has no virtual level at the bottom of its essential spectrum, we let $k:=0$.

We denote:
$\varPsi^{(j)}:=\cU_\g(\psi^{(j)})$,
$j=1,\ldots,k$.
We choose the functions $\psi^{(j)}$ so that the associated vectors $\varPsi^{(j)}$ are orthonormalized in $\mathds{C}^{d_0}$. If $k<d_0$, we choose arbitrary vectors $\varPsi^{(j)}\in \mathds{C}^{d_0}$, $j=k+1,\ldots,d_0$, so that the vectors $\varPsi^{(j)}\in \mathds{C}^{d_0}$, $j=1,\ldots,d_0$, form an orthonormalized basis in $\mathds{C}^{d_0}$. Hence, the matrix
$\Psi:=
\begin{pmatrix}
\varPsi^{(1)} & \ldots &\varPsi^{(k)} & \varPsi^{(k+1)} & \ldots &\varPsi^{(d_0)}
\end{pmatrix}$
is unitary.

\subsection{Parts of the resolvent and main result}

Our main result concerns certain operators, which are naturally regarded as some parts of the resolvent $(\Op_\e-\l)^{-1}$. In this subsection we introduce these operators and their limits as $\e\to+0$ and we formulate our main result.

Let $\cP_{\G}: L_2(\G_\e)\to L_2(\G)$ and $\cP_{\g_\e}: L_2(\G_\e)\to L_2(\g_\e)$ be the operators of restriction to the graphs $\G$ and $\g_\e$, that is, on each $f\in L_2(\G_\e)$ they act as follows:
$\cP_{\G}f:=f\big|_{\G}$, $\cP_{\g_\e}f:=f\big|_{\g_\e}$. In the sense of the decomposition $L_2(\G_\e)=L_2(\G)\oplus L_2(\g_\e)$ these operators satisfy the identity
\begin{equation}\label{2.3}
\cP_{\G}\oplus\cP_{\g_\e}=\cI_{\G_\e},
\end{equation}
where $\cI_{\G_\e}$ is the identity mapping in $L_2(\G_\e)$.

For each $\l\in\mathds{C}\setminus\mathds{R}$ the resolvent $(\Op_\e-\l)^{-1}$ is well-defined.
The operators
\begin{equation*}
\cR_\G(\e,\l):=\cP_{\G}(\Op_\e-\l)^{-1}(\cI_{\G}\oplus
\cS_\e),\qquad
\cR_\g(\e,\l):=\cS_\e^{-1} \cP_{\g_\e}(\Op_\e-\l)^{-1}(\cI_{\G}\oplus \cS_\e)
\end{equation*}
are also well-defined, linear and are bounded as acting from $L_2(\G)\oplus L_2(\g)$ into $\dH^2(\G)$ and $\dH^2(\g)$; here the direct sum is understood in the sense of identity (\ref{2.3}).

 Let us clarify the action of the operators $\cR_\G$ and $\cR_\g$.
Given $f\in L_2(\G_\e)$ and $u_\e:=(\Op_\e-\l)^{-1}f$, we consider the restrictions of these functions to the subgraphs $\G$ and $\g_\e$. The restrictions to $\g_\e$ are additionally rescaled by means of the operator $\cS_\e$, that is, they are simply regarded as functions of the rescaled variable $\xi:=x\e^{-1}$ defined on $\g$. These restrictions, with the rescaling taken into account, are obviously $\cP_\G f$, $\cP_\G u_\e$ and $\cS_\e^{-1} \cP_{\g_\e}f$, $\cS_\e^{-1} \cP_{\g_\e}u_\e$. Then the operators $\cR_\G(\e,\l)$ and $\cR_\g(\e,\l)$ map a pair $(\cP_\G f,\,\cS_\e^{-1} \cP_{\g_\e}f)$ respectively into $\cP_\G u_\e$ and $\cS_\e^{-1} \cP_{\g_\e}u_\e$.
 We also observe an obvious identity implied immediately by the definition of the operators $\cR_\G(\e,\l)$ and $\cR_\g(\e,\l)$:
\begin{equation}\label{2.35}
(\Op_\e-\l)^{-1}=\big(\cR_\G(\e,\l) \oplus \cS_\e\cR_\g(\e,\l)\big)(\cP_\G\oplus \cS_\e^{-1}\cP_{\g_\e}).
\end{equation}

 Now we introduce an operator, which will serve as a limiting one for $\cR_\G(\e,\l)$. Since the operator $\cR_\G(\e,\l)$ represents the action of the resolvent $(\Op_\e-\l)^{-1}$ on the subgraph $\G$, its limit should be the resolvent $(\Op_0-\l)^{-1}$, where $\Op_0$ is an operator in $L_2(\G)$
with the differential expression $\hat{\Op}(0)$
subject to the vertex conditions
\begin{equation}\label{2.34}
\rA_{M}^{(0)}\cU_M(u)+\rB_{M}^{(0)}\cU_M'(u)=0\quad\text{at each vertex}\quad M\in \G.
\end{equation}
For the vertices $M\ne M_0$, the matrices in the above vertex condition are defined as
$\rA_M^{(0)}:=\rA_M(0)$, $\rB_M^{(0)}:=\rB_M(0)$; the domain of the operator $\Op_0$
is formed by the functions from $\dH^2(\G)$
satisfying vertex conditions (\ref{2.34}).

The structure of the matrices $\rA_{M_0}^{(0)}$, $\rB_{M_0}^{(0)}$ is more complicated and heuristically they can be found as follows. Given $f\in L_2(\G_\e)$, let $u_\e:=(\Op_\e-\l)^{-1}f$. Since the coefficients in the differential expression and boundary conditions of $\Op_\e$ depend analytically in $\e$ and the graph $\g_\e$ is also obtained via rescaling by means of $\e$, it is natural to expect that on $\G_\e$, we have two approximations
\begin{equation}\label{2.38}
u_\e(x)=u_0(x)+\ldots\quad\text{on}\quad \G,
\qquad
u_\e(x)=v_0(\xi) + \e v_1(\xi)+\ldots\quad\text{on}\quad \g_\e,\qquad \xi:=x\e^{-1},
\end{equation}
where $u_0$, $v_0$, $v_1$ are some functions.
Since these are approximations for the same function $u_\e$, which is continuously differentiable on the edges $\ed_i$, $i=1,\ldots,d_0$, we should obviously have
\begin{equation}\label{2.39}
\cU_{M_0}(u_0)=\cU_\g(v_0),\qquad \cU_{M_0}'(u_0)=
\begin{pmatrix}
v_1'\big|_{\ed_i^\infty}(M_j)
\end{pmatrix}_{i\in J_j,\ j=1,\ldots,n},\qquad
\begin{pmatrix}
v_0'\big|_{\ed_i^\infty}(M_j)
\end{pmatrix}_{i\in J_j,\ j=1,\ldots,n}=0.
\end{equation}
We substitute then (\ref{2.38}) into the equation $(\Op_\e-\l)u_\e=f$ considered on $\g_\e$, pass to the variables $\xi$, and equate the coefficients at the like powers of $\e$. This gives immediately the boundary value problem for $v_0$
\begin{equation}\label{2.40}
\hat{\Op}_\infty v_0=0\quad\text{on}\quad\g_\infty,\qquad
\rA_M^{(0)}\cU_M(v_0)+\rB_M^{(0)}\cU_M'(v_0)=0\quad\text{at}\quad M\in\g_\infty,
\end{equation}
and for $v_1$
\begin{equation}\label{2.42}
\begin{aligned}
&\hat{\Op}_\infty v_1= \frac{d\ }{d\xi}\frac{d p_\g}{d\e}(\,\cdot\,,0)\frac{d v_0}{d\xi}
-\iu\left(\frac{d\ }{d\xi}\frac{d q_\g}{d\e}(\,\cdot\,,0)v_0+\frac{d q_\g}{d\e}(\,\cdot\,,0)\frac{d v_0}{d\xi}\right)
-\frac{dV_\g^{(0)}}{d\e}(\,\cdot\,,0)v_0
\quad\text{on}\quad\g,
\\
&\rA_M^{(0)}\cU_M(v_1)+\rB_M^{(0)}\cU_M'(v_1)=
-\rA_M^{(1)}\cU_M(v_0)-\rB_M^{(1)}\cU_M'(v_0)
\quad\text{at}\quad M\in\g_\infty,
\end{aligned}
\end{equation}
where $\rA_M^{(1)}$ and $\rB_M^{(1)}$ are the matrices in the next terms in expansions (\ref{2.60a}); they read as
\begin{equation*}
\rA_M^{(1)}:=
\begin{pmatrix}
\rA_M^+(0)
\\
\frac{d\rA_M^-}{d\e}(0)
\end{pmatrix},\qquad \rB_M^{(1)}:=
\begin{pmatrix}
\frac{d\rB_M^+}{d\e}(0)
\\
\frac{1}{2}\frac{d^2\rB_M^-}{d\e^2}(0)
\end{pmatrix}
\end{equation*}

In view of the first and second conditions in (\ref{2.39}), the solution of problem (\ref{2.40}) reads as
\begin{equation}\label{2.43}
v_0 =\sum\limits_{i=1}^{k} c_i \psi^{(i)}, \quad \begin{pmatrix}
 c_1(f) &
 \ldots &
 c_k(f)
 \end{pmatrix}^t
 :=
 \begin{pmatrix}
 \varPsi^{(1)} & \ldots &\varPsi^{(k)}
 \end{pmatrix} \cU_{M_0}\big((\Op_\G-\l)^{-1}f\big).
\end{equation}
In order to ensure the solvability of problem (\ref{2.42}), we multiply the equation for $v_1$ by $\psi^{(i)}$, $i=1,\ldots,k$, and integrate twice by parts over $\g$. In view of formula (\ref{2.43}) and the second identity in (\ref{2.39}), such procedure gives certain solvability condition relating the vectors $\cU_{M_0}(u_0)$ and $\cU_{M_0}'(u_0)$. The relation obtained in this way is exactly the desired vertex condition for the operator $\Op_0$ at the vertex $M_0$.

The corresponding matrices $\rA_{M_0}^{(0)}$ and $\rB_{M_0}^{(0)}$ turn out to be of form
\begin{equation}\label{2.32}
\rA_{M_0}^{(0)}:=
\begin{pmatrix}
\rQ & 0
\\
0 & \rI_{d_0-k}
\end{pmatrix}\Psi^*+\iu\begin{pmatrix}
\rI_{k} & 0
\\
0 & 0
\end{pmatrix}\Psi^* \Th_{\G,M_0}(0),
\qquad
\rB_{M_0}^{(0)}:=-
\begin{pmatrix}
\rI_k & 0
\\
0 & 0
\end{pmatrix}\Psi^*\Pi_{\G,M_0}(0),
\end{equation}
where the symbol $0$ in the first row of the matrix $\rA_{M_0}$ stands for the zero matrix of the size $k\times(d_0-k)$, while in the second row the same symbol denotes the zero matrix of the size $(d_0-k)\times k$.
In the definition of the matrix $\rB_{M_0}$, the first matrix is of the size $d_0\times d_0$ and the symbols $0$ stands for the zero matrices of respectively the sizes $k\times(d_0-k)$, $(d_0-k)\times k$, and $(d_0-k)\times (d_0-k)$. The symbol $\rI_d$ denotes the unit matrix of size $d\times d$. The matrices $\Pi_{\G,M_0}$, $\Th_{\G,M_0}$ are defined as
\begin{equation*}
\Pi_{\G,M_0}(\e):=\diag\big\{\nu_i(M_0) p_\G\big|_{\ed_i}(M,\e)\big\}_{i=1,\ldots,d_0},\qquad
\Th_{\G,M_0}(\e):=\diag\big\{\nu_i(M_0) q_\G\big|_{\ed_i}(M,\e)\big\}_{i=1,\ldots,d_0},
\end{equation*}
where $\ed_i$ are the edges incident to the vertex $M_0$, while $\nu_i(M_0)$ is defined in the same way as in (\ref{2.4b}).

The matrix $\rQ$ is exactly one appearing via integrating by parts, while obtaining the aforementioned solvability condition of problem (\ref{2.42}): 
\begin{equation*}
\rQ:=
\begin{pmatrix}
Q^{(11)} &\ldots & Q^{(k1)}
\\
\vdots &&\vdots
\\
Q^{(1k)}&\ldots & Q^{(kk)}
\end{pmatrix},
\end{equation*}
with the entries
\begin{align}\label{2.31}
&
\begin{aligned}
Q^{(ij)}:=Q_\g^{(ij)}+\sum\limits_{M\in\g_\infty} Q_M^{(ij)},
\end{aligned}
\\
&
\begin{aligned}
Q_\g^{(ij)}:=&\left(\frac{d p_\g}{d\e}(\,\cdot\,,0)\frac{d\psi^{(i)}}{d\xi},\frac{d\psi^{(j)}}{d\xi}\right)_{L_2(\g)} +
\left(\frac{d\psi^{(i)}}{d\xi},\iu \frac{d q_\g}{d\e}(\,\cdot\,,0) \psi^{(j)}\right)_{L_2(\g)}
\\
&+
\left(\iu \frac{d q_\g}{d\e}(\,\cdot\,,0) \psi^{(i)},\frac{d\psi^{(j)}}{d\xi}\right)_{L_2(\g)}
+\left(\frac{d V_\g}{d\e}(\,\cdot\,,0)\psi^{(i)},\psi^{(j)}\right)_{L_2(\g)},
\end{aligned}\label{2.25a}
\\
&
Q_M^{(ij)}:=\big(
\cL_M(\psi^{(i)}),
\cU_M(\psi^{(j)})\big)_{\mathds{C}^{d(M)}}
-\frac{\iu}{2}\Big(
\cQ_M(\psi^{(i)}), \cV_M(\psi^{(j)})
\Big)_{\mathds{C}^{d(M)}},
\label{2.25b}
\\
&\cL_M(\psi^{(i)}):=\frac{d\Pi_{\g,M}}{d\e}(0) \cU_M'(\psi^{(i)})-
\iu \frac{d \Th_{\g,M}}{d\e}(0)\cU_M(\psi^{(i)})
+\big(\rU_M^{(0)}+\rI_{d(M)}\big)^{-1}
\rP_{M,\bot}^{(0)}\cQ_M(\psi^{(i)}),\label{2.25c}
\\
&
\cQ_M(\,\cdot\,):= 2\iu\big(\tilde{\rA}_M^{(0)}-\iu \tilde{\rB}_M^{(0)}\big)^{-1} \Big(\rA_M^{(1)}\cU_M(\,\cdot\,)+\rB_M^{(1)} \cU_M'(\,\cdot\,) \Big), \label{2.29}
\\
&\cV_M(\,\cdot\,):=\Pi_{\g,M}(0)
\cU_M'(\,\cdot\,)-\iu \Th_{\g,M}(0)
\cU_M(\,\cdot\,), \nonumber
\\
&
\begin{aligned}
\tilde{\rA}_M^{(0)}:=\rA_M^{(0)}+\iu \rB_M^{(0)} \Pi_{\g,M}^{-1}(0) \Th_{\g,M}(0),\qquad \tilde{\rB}_M^{(0)}:=\rB_M^{(0)} \Pi_{\g,M}^{-1}(0),
\end{aligned}\nonumber
\\
&\rU_M^{(0)}:=-\big(\tilde{\rA}_M^{(0)}-\iu\tilde{\rB}_M^{(0)}\big)^{-1}\big(\tilde{\rA}_M^{(0)}+\iu \tilde{\rB}_M^{(0)}\big),\label{2.30a}
\\
&
\begin{aligned}
&\Pi_{\g,M}(\e):=\diag\big\{\nu_i(M) p_\g \big|_{\ed_i(M)}(M,\e)\big\}_{i=1,\ldots,d(M)},
\\
&\Th_{\g,M}(\e):=\diag\big\{\nu_i(M) q_\g \big|_{\ed_i(M)}(M,\e)\big\}_{i=1,\ldots,d(M)},
\end{aligned}
\label{2.26a}
\end{align}
where $\ed_i(M)$ are the edges incident to the vertex $M$, the numbers $\nu_i(M)$ are defined as in (\ref{2.4b}), and while applying formula (\ref{2.26a}) to $M=M_j$, the functions $V_\g^{(l)}$ are supposed to be continued on $\ed_i^\infty$, $i\in J_j$, $j=1,\ldots,n$, as follows:
$p_\g(\,\cdot\,,\e)\equiv\sp_i(\e)$,
$q_\g(\,\cdot\,,\e)\equiv\e \sq_i(\e)$, $\sq_i(\e):=q_\G\big|_{\ed_i}(M_0,\e)$.
We shall show that the matrix $\rU_M^{(0)}$ defined in (\ref{2.30a}) is unitary, see Lemma~\ref{lm7.1}.
 By $\rP_M^{(0)}$ we denote the projector in $\mathds{C}^{d(M)}$ onto the eigenspace of the matrix $\rU_M^{(0)}$ associated with the eigenvalue $-1$, and we let $\rP_{M,\bot}^{(0)}:=\rI_{d(M)}-\rP_M^{(0)}$.


 The second identity in (\ref{2.38})
means that the function $v_0$ should represent the limit of the operator $\cR_\g$ as $\e\to+0$.
 Namely, in view of formula (\ref{2.43}), we introduce one more operator $\cR_\g^{(0)}(\l): L_2(\G)\to \dH^2(\g)$ acting on each
 $f\in L_2(\G)$ by the rule:
\begin{gather*}
\cR_\g^{(0)}(\l)f:=\sum\limits_{i=1}^{k} c_i(f)\psi^{(i)},
\\
 \begin{pmatrix}
 c_1(f) & \ldots &
 c_k(f)
 \end{pmatrix}^t
 := \begin{pmatrix}
 \varPsi^{(1)} & \ldots &\varPsi^{(k)}
 \end{pmatrix}
 \cU_{M_0}\big((\Op_0-\l)^{-1}f\big).
\end{gather*}
We shall show later that this operator is indeed the limiting one for $\cR_\g(\e,\l)$.

We introduce the following spaces of continuous functions on graphs:
\begin{align*}
&\dC(\,\cdot\,):=\dC^0(\,\cdot\,):=\bigoplus\limits_{\ed\in\,\cdot\,} C(\overline{\ed})\cap L_\infty(\ed), \qquad
\dC^1(\,\cdot\,):=\bigoplus\limits_{\ed\in\,\cdot\,} C^1(\overline{\ed})\cap W_\infty^1(\ed),
\\
&\dC^2(\,\cdot\,):=\bigoplus\limits_{\ed\in\,\cdot\,} C^1(\overline{\ed})\cap W_\infty^2(\ed),
\qquad
\|u\|_{\dC^i(\overline{\ed})}:=\sum\limits_{\ed\in\cdot} \|u\|_{W_\infty^i(\ed)}.
\end{align*}
If the functions $p_\G$, $q_\G$, $V_\G$ and $p_\g$, $q_\g$, $V_\g$ have an additional smoothness $p_\G, q_\G\in \dC^1(\G)$, $V_\G\in \dC^0(\G)$,
$p_\g, q_\g\in \dC^1(\g)$, $V_\g\in \dC^0(\g)$ and are analytic
in $\e$ in the norms of these spaces, then we let
$\dC^2(\,\cdot\,):=\bigoplus\limits_{\ed\in\,\cdot\,} C^2(\overline{\ed})\cap W_\infty^2(\ed)$.

Now we are in position to formulate our main result.

\begin{theorem}\label{th1}
Assume that the matrices $\rA_M(\e)$, $\rB_M(\e)$ satisfy the above formulated conditions.
Then the operators $\Op_\e$ and $\Op_0$ are self-adjoint. Suppose that Condition~\ref{C1} holds. The operators $\cR_\G(\e,\l)$ and $\cR_\g(\e,\l)$ are linear and bounded as acting from $L_2(\G)\oplus L_2(\g)$ into $\dC^2(\G)$ and $\dC^2(\g)$. For each $\l\in\mathds{C}\setminus\mathds{R}$ there exists $\e_0(\l)>0$ such that as $\e<\e_0(\l)$, the operators $\cR_\G(\e,\l)$ and $\cR_\g(\e,\l)$ are analytic
in $\e$ as operators from $L_2(\G)\oplus L_2(\g)$ into $\dH^2(\G)$ and $\dH^2(\g)$ and into $\dC^2(\G)$ and $\dC^2(\g)$.
In both cases, the leading terms of the Taylor series for these operators read as
\begin{equation}\label{2.6}
\cR_\G(\e,\l)=(\Op_0-\l)^{-1}\cP_\G +O(\e),
\qquad
\cR_\g(\e,\l)=\cR_\g^{(0)}(\l)\cP_\G+O(\e),
\end{equation}
where the estimates for the error terms depend on $\l$.
\end{theorem}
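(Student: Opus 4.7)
The plan is to recast the resolvent equation $(\Op_\e-\l)u_\e=f$ as a coupled problem on $\G$ and on the rescaled graph $\g$ (viewed as the bulk of $\g_\infty$), then invoke an analytic Fredholm argument whose invertibility at $\e=0$ is supplied by Condition~\ref{C1}.

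First I would pull back by $\cS_\e$: set $\tilde f := \cP_\G f \oplus \cS_\e^{-1}\cP_{\g_\e}f$ and $U_\e := \cP_\G u_\e \oplus \cS_\e^{-1}\cP_{\g_\e}u_\e$. On $\G$ the function $U_\e|_\G$ solves $(\hat{\Op}(\e)-\l)U_\e|_\G = \tilde f|_\G$; on $\g$, after multiplying the equation by $\e^2$ and changing to the variable $\xi$, the $\e$-dependent differential expression becomes $\hat{\Op}_\infty + \e R_1(\e) - \e^2\l$ acting on $U_\e|_\g$, with right-hand side $\e^2\tilde f|_\g$. The matching of $U_\e$ and $\frac{dU_\e}{dx}$ across the vertices $M_j\sim M_0$ is encoded in the vertex conditions (\ref{2.9}); premultiplying those at $M_j$ by $\tilde\rE_{M_j}(\e)$ and using (\ref{2.60a}) recasts them as $\rA_{M_j}^{(0)}\cU_{M_j}+\rB_{M_j}^{(0)}\cU_{M_j}' = O(\e)$-corrections, while on the $\G$-side the derivatives $\cU_{M_0}'(U_\e|_\G)$ appear through the off-diagonal $\e^{-1}\rI_{d(M)-r(M)}$ block of $\tilde\rE_{M_j}(\e)^{-1}$, giving a coupling which is uniform in $\e$ once the vertex conditions are rewritten in the correct basis.

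Next, I would treat the $\g_\infty$-side abstractly via the auxiliary operator $\Op_\infty$ from Section~5. Since the essential spectrum of $\Op_\infty$ is $[0,+\infty)$ and, by Condition~\ref{C1}, zero is not an eigenvalue, the bounded kernel of $\Op_\infty$ at the threshold is spanned by the $k$ virtual-level modes $\psi^{(j)}$ with $\cU_\g(\psi^{(j)}) = \varPsi^{(j)}\neq 0$. A suitable Krein-type formula for $\Op_\infty$ (built in Section~5) then expresses the $\g$-part of the solution as a regular piece plus a linear combination $\sum c_j\psi^{(j)}$, with the coefficients $c_j$ determined by finite-dimensional data at the vertices $M_j$. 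Combining this with the $\G$-equation, the whole system reduces to one operator equation $\cA(\e)X = F$, where $X$ collects $(U_\e|_\G, U_\e|_\g, c_1,\ldots,c_k)$, $\cA(\e)$ is analytic in $\e$ as a bounded operator between suitable Sobolev-type spaces, and $\cA(0)$ is block-triangular: its $\G$-block is $\Op_0-\l$ with vertex conditions (\ref{2.34}) and matrices (\ref{2.32}), and its $c$-block is the identity matrix determined by the orthonormalization of the $\varPsi^{(j)}$. Self-adjointness of $\Op_0$ (verified in Section~4 along the lines of the criterion (\ref{2.4a})) together with $\l\in\mathds{C}\setminus\mathds{R}$ makes $\cA(0)$ invertible, so by analytic Fredholm / implicit-function theory $\cA(\e)^{-1}$ is analytic in $\e$ for $|\e|<\e_0(\l)$. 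Reading off the $\e=0$ solution immediately yields the leading identities (\ref{2.6}), and the $\dC^2$-bounds follow from Sobolev embedding on finite-length edges and a separate elementary analysis of the outgoing exponential modes on the leads used only in the intermediate step.

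The main obstacle is the translation between ``no embedded eigenvalue at threshold of $\Op_\infty$'' (Condition~\ref{C1}) and invertibility of the limiting problem on $\G$ with matrices (\ref{2.32}). Concretely, this requires showing that the solvability condition for the auxiliary inhomogeneous problem (\ref{2.42}) at $\e=0$, obtained by pairing with each $\psi^{(j)}$ and integrating by parts, is exactly the vertex condition defined by (\ref{2.32})--(\ref{2.30a}) --- so that the coupling matrix $\rQ$ emerging from the integration by parts agrees with the one appearing naturally in the matrix $\cA(0)$. This computational identification, together with checking that the Robin-type contribution generated by the next-to-leading terms $\rA_M^{(1)},\rB_M^{(1)}$ is captured correctly via $\cQ_M$ in (\ref{2.29}), is the delicate bookkeeping step; the rest of the argument is a routine (though lengthy) application of analytic perturbation theory once the non-degeneracy of $\cA(0)$ is established.
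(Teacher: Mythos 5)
Your overall strategy---decompose $u_\e$ into its $\G$-part and rescaled $\g$-part, match at the vertices $M_j$, reduce to a finite-dimensional system, and let Condition~\ref{C1} control the $\g$-side---is the same skeleton as the paper's Section~\ref{ss:Th1}. But there is a genuine gap in the middle step, where you assert that the whole problem reduces to $\cA(\e)X=F$ with $\cA(\e)$ \emph{analytic} in $\e$ and $\cA(0)$ invertible and block-triangular with $\G$-block $\Op_0-\l$. The natural reduction does not produce an analytic family: the derivative-matching condition across $M_j$ carries an explicit factor $\e^{-1}$ (cf.\ (\ref{9.28})), and the auxiliary resolvent on the $\g$-side, evaluated at the spectral parameter $\e^2\l$, has a pole at $\e=0$ of order up to two whose coefficients are governed by the small eigenvalues $\L_j^{(\e)}=\e\,\l_j(\rQ)+\e^2\L_j^{(2)}+\cdots$ (Lemma~\ref{lmHge}). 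The matrix $\rQ$, and hence the limiting vertex condition (\ref{2.32}) defining $\Op_0$, emerges only from the \emph{first-order} solvability conditions (the analogue of problem (\ref{2.42})), not as a block of any zero-order operator; and the leading matrix of the reduced system, $\tilde\Psi^*\rT_{-2}$ in (\ref{9.38}), is degenerate (it vanishes entirely when $k_0=0$). So ``analytic Fredholm with invertible $\cA(0)$'' is not available as stated; one must instead track the meromorphic structure, prove that the $\e^{-2}$ and $\e^{-1}$ singularities cancel in the vectors $\ra(\e)$, $\rb(\e)$ (Lemma~\ref{lm-ab}), and establish invertibility of the auxiliary matrices $\rX_0-\l\rG_0$ and $\rQ_3$ (Lemma~\ref{lmQ3}), which uses the self-adjointness of $\rT_\G(0)-\iu\IM\l\,\rG_\G$ and positivity of a Gram matrix rather than merely the self-adjointness of $\Op_0$.

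A second, related omission: you propose to run the $\g$-side through a Krein-type formula for $\Op_\infty$ on the non-compact graph $\g_\infty$, but the relevant spectral parameter $\e^2\l$ tends to the threshold of $\essspec(\Op_\infty)$, where the resolvent of $\Op_\infty$ does not converge in the operator norm on $L_2$ and no analytic-in-$\e$ formula is claimed or proved anywhere in Section~5. The paper sidesteps this by never using $\Op_\infty$ quantitatively: it introduces the \emph{compact} extension $\g_{ex}$ with unit edges and $\e$-dependent Robin conditions (\ref{5.4}) that encode the matching exactly, so that the auxiliary operator $\Op_{ex}(\e)$ has discrete spectrum and standard analytic perturbation theory applies to its small eigenvalues; Condition~\ref{C1} enters only through the identification of $\Ker\Op_{ex}(0)$ with the virtual-level modes $\tilde\psi^{(j)}$. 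Without either this compactification or a proved threshold expansion for $(\Op_\infty-z)^{-1}$ in weighted spaces, the step ``express the $\g$-part as a regular piece plus $\sum c_j\psi^{(j)}$'' is unsupported.
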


\section{Discussion and generalizations}

\subsection{Discussion of main results}

In this subsection we discuss the main features of our problem and the main result. First of all we stress that our operator is very general. Namely, its differential expression involves not only the potential, but also the first order terms and a varying coefficient at a higher derivatives, see (\ref{2.1}). On the small edges, the coefficients can depend singularly on $\e$ because of the presence of the negative powers of $\e$ in (\ref{2.1a}). The vertex conditions are also of general form, see (\ref{2.9}). The self-adjointness of the matrix in (\ref{2.4a}) and rank condition (\ref{2.9rank}) are very natural and in fact, they are a criterion ensuring the self-adjointness of the considered operator.

Our main result, Theorem~\ref{th1}, states that under the assumptions made for the coefficients in the differential expression and the vertex conditions and under Condition~\ref{C1}, the resolvent of the operator $\Op_\e$ is analytic
in $\e$ in certain sense. Namely, the operators $\cR_\G(\e,\l)$ and $\cR_\g(\e,\l)$ are analytic
in $\e$. Since the resolvent $(\Op_\e-\l)^{-1}$ can be recovered from the operators $\cR_\G(\e,\l)$ and $\cR_\g(\e,\l)$ via formula (\ref{2.35}), Theorem~\ref{th1} in fact states the analyticity
of the parts of the resolvent $(\Op_\e-\l)^{-1}$ corresponding to subgraphs $\G$ and $\g_\e$. As we have already mentioned, the presence of small edges is a singular perturbation, which can not be treated by the methods from the regular perturbation theory \cite{Kato}. Usually, for singularly perturbed problems one can expect only the convergence result and a chance to construct some asymptotic series. For instance, this is a standard situation for elliptic operators on Euclidean domains or manifolds with singular localized perturbation \cite{Il}, \cite{MNP-book}. A classical example is an elliptic problem in a domain with a small hole, see \cite[Ch. I\!I\!I]{Il}, \cite[Ch. I\!I, Sec. 2.2]{MNP-book}. It is possible to construct complete asymptotic expansions for the corresponding resolvent provided it acts on a sufficiently smooth function, but already simple explicitly solvable examples of such problems show that these asymptotic series are not analytic in the small parameter. From this point of view, our result on analyticity is of a special nature: despite of the presence of a singular perturbation, the parts of the resolvent $(\Op_\e-\l)^{-1}$ are analytic in $\e$ in the operator norms and these norms are strongest possible.

Once we know that the operators $\cR_\G(\e,\l)$ and $\cR_\g(\e,\l)$ are analytic
in $\e$, they are represented by their converging Taylor series. Then we can recover the resolvent $(\Op_\e-\l)^{-1}$ via formula (\ref{2.35}) and obtain a converging Taylor-like series for the resolvent. Namely, given the Taylor series
\begin{equation}\label{3.45}
\begin{aligned}
&\cR_\G(\e,\l)(f_\G,f_\g)=
\sum\limits_{l=0}^{\infty} \e^l u_l^\G, && u_0^\G:=(\Op_0-\l)^{-1} f_\G,
\\
&
\cR_\g(\e,\l)(f_\G,f_\g)=\sum\limits_{l=0}^{\infty} \e^l u_l^\g, \quad && u_0^\g:=\sum\limits_{i=1}^{k} c_i(f_\G) \psi^{(i)},
\end{aligned}
\end{equation}
converging respectively in $\dH^2(\G)\cap \dC^2(\G)$ and $\dH^2(\g)\cap \dC^2(\g)$, we immediately obtain
\begin{equation}\label{2.36}
(\Op_\e-\l)^{-1}f=\sum\limits_{l=0}^{\infty} \e^l u_l^\G\oplus \cS_\e^{-1} u_l^\g,\qquad f_\G:=\cP_\G f,\qquad f_\g:=\cS_\e \cP_{\g_\e} f,
\end{equation}
where the series converges in $\dH^2(\G_\e)\cap \dC^2(\G_\e)$. The remainder in this series can be estimated by means of standard estimates for the remainders of the Taylor series in (\ref{3.45}); one just should take into consideration the following obvious relations:
\begin{equation*}
\big\|f_\G\oplus f_\g \big\|_{L_2(\G)\oplus L_2(\g)}
=\left(\|f\|_{L_2(\G)}^2 +\e^{-1}\|f\|_{L_2(\g_\e)}^2\right)^{\frac{1}{2}}
\leqslant \e^{-\frac{1}{2}}\|f\|_{L_2(\G_\e)}.
\end{equation*}
In particular, in this way we obtain:
\begin{equation}\label{2.50}
\Big\|(\Op_\e-\l)^{-1}f -(\Op_0-\l)^{-1}f\Big\|_{\dH^2(\G)\cap \dC^2(\G)}
\leqslant C\e^{\frac{1}{2}} \|f\|_{L_2(\G_\e)};
\end{equation}
hereinafter the symbol $C$ denotes various inessential constants independent of $f$, $f_\G$, $f_\g$ and $\e$.
By straightforward calculations it is easy to confirm that
\begin{equation*}
\|\cS_\e^{-1} u_0^\g\|_{L_2(\g_\e)}^2=\e \|u_0^\g\|_{L_2(\g)}^2\leqslant C\e \|f_\G\|_{L_2(\G)}^2\leqslant C\e \|f\|_{L_2(\G_\e)}^2,
\end{equation*}
 and then it follows from (\ref{2.36})
that
\begin{equation*}
\big\|(\Op_\e-\l)^{-1}f\big\|_{L_2(\g_\e)}\leqslant C\e^{\frac{1}{2}}\|f\|_{L_2(\G_\e)}.
\end{equation*}
The latter estimate and (\ref{2.50}) are exactly a convergence result similar to ones established in papers \cite{BLS-AdvMath2019} and \cite{Ital}. However, in these papers the difference of the resolvents $(\Op_\e-\l)^{-1}f$ and $(\Op_0-\l)^{-1}f$ was estimated in $L_2$-norm only, while our inequality (\ref{2.50}) provides the bounds in stronger $\dH^2(\G)$-norm and $\dC^2(\G)$-norm. Moreover, we have one more estimate
\begin{equation*}
\bigg\|(\Op_\e-\l)^{-1}f-\cS_\e u_0^\g\bigg\|_{\dC(\g_\e)}
\leqslant C \e^{\frac{1}{2}} \|f\|_{L_2(\G_\e)}.
\end{equation*}
This estimate shows how to approximate the resolvent $(\Op_\e-\l)^{-1}f$ on $\g_\e$ to get a small error term. In particular, in view of formula for $u_0^\g$ in (\ref{3.45}), we see that the resolvent $(\Op_\e-\l)^{-1}f$ is approximated by $\sum\limits_{i=1}^{k} c_i(f_\G) \cS_\e\psi^{(i)}$ in $\dC(\g_\e)$-norm with an error of order $\e^{\frac{1}{2}}$.

In comparison with \cite{BLS-AdvMath2019} and \cite{Ital}, our convergence result has one more important feature. This feature is related with the limiting condition at the vertex $M_0$ for the operator $\Op_0$, namely, with the definition of the matrices $\rA_{M_0}^{(0)}$ and $\rB_{M_0}^{(0)}$. As formulae (\ref{2.32}) show, if the operator $\Op_\infty$ has a virtual level at the bottom of its essential spectrum, the matrix $\rA_{M_0}^{(0)}$ involves the matrix $\rQ$. The presence of the latter produces a Robin part in the vertex condition at the vertex $M_0$. According to formulae (\ref{2.31}), (\ref{2.25a}), (\ref{2.25b}), (\ref{2.25c}), the matrix $\rQ$ is determined by the functions $\frac{d V_\g^{(i)}}{d\e}(\,\cdot\,,0)$ and by the matrices $\rA_M^{(1)}$, $\rB_M^{(1)}$, $M\in\g$.

We also observe that the functions $\frac{d V_\g^{(i)}}{d\e}(\,\cdot\,,0)$ and the matrices $\rA_M^{(1)}$, $\rB_M^{(1)}$ are in fact \textsl{next-to-leading terms} in the above differential expression and vertex conditions. Hence, we conclude that the Robin part in the limiting condition at the vertex $M_0$ is generated \textsl{solely by the above next-to-leading terms}. Once such terms vanish, only the Dirichlet and Neumann parts in the vertex condition at $M_0$ can be present. In particular, this explains why only the Dirichlet and Neumann parts appeared in the limit in \cite{Ital}; the next-to-leading terms were apriori assumed to be zero. The results of \cite{BLS-AdvMath2019} allowed a Robin part in the limiting vertex condition, but in terms of our notations this was because the matrices $\rA_M$ and $\rB_M$ in (\ref{2.60a})
were assumed to be independent of $\e$ and this is why the matrices $\rA_M^{(1)}$ were in general non-zero. However, the issue when and why the Robin part appears in the limiting condition at $M_0$ was not addressed in \cite{BLS-AdvMath2019} and moreover, since the main result in \cite{BLS-AdvMath2019} was formulated in terms of Lagrangian planes, the presence and the origination of the Robin part in the limiting vertex condition was not so explicit as in our case. We also stress that our result shows how the Robin condition can be generated in the general case by the coefficients both in the differential expression and vertex conditions of $\Op_\e$.

In order to demonstrate the difference between our convergence result and those in \cite{BLS-AdvMath2019},~\cite{Ital}, we consider a simple model of a star graph with two fixed edges and one small edge of the length $\e$, which was studied in \cite{Bor3}, see Figure~\ref{example}. On the fixed edges we consider a Schr\"odinger operator $-\frac{d^2}{dx^2}+V_\pm$ with some continuous potentials, while on the small edge the operator reads as $-\frac{d^2}{dx^2}+\e^{-1} V_0(\,\cdot\,\e)$, where $V_0\in C[0,1]$ is some given function. At the vertex $M_0$ the Kirchhoff condition is imposed, while the vertex $M_\e$ is subject to the Dirichlet or Neumann condition. The limiting operator then is just the same operator on finite edges, while at the vertex $M_0$ we have the Dirichlet condition if the same is imposed at $M_\e$. But if the vertex $M_\e$ is subject to the Neumann condition, then in the limit at $M_0$ we have a delta-interaction, that is, the Kirchhoff condition with a coupling:
\begin{equation*}
u\big|_{\ed_-}(M_0)=u\big|_{\ed_-}(M_0)=:u(M_0),
\qquad
u\big|_{\ed_-}(M_0)+u\big|_{\ed_-}(M_0)=c u(M_0),\qquad c:=\int\limits_{0}^{1} V_0(t)\,dt.
\end{equation*}
As we see, the coupling constant $c$ is determined solely by the potential $V_0$.

It is also interesting to compare our Condition~\ref{C1} with similar conditions in~\cite{BLS-AdvMath2019} and~\cite{Ital}. All these conditions are close but there are some differences. In our terms, roughly speaking, Non-resonance condition~3.2 in \cite{BLS-AdvMath2019} says that there are no embedded eigenvalues at the bottom of the essential spectrum for all $\e>0$. Namely,
we should repeat the rescaling we employed while obtaining the operator $\Op_\infty$ in (\ref{2.7}), (\ref{2.18}) but without letting $\e=0$. This gives an operator depending on $\e$ and coinciding with $\Op_\infty$ as $\e=0$. Such operator can not have an embedded eigenvalue at the bottom of its essential spectrum for $\e>0$. This does not imply that the same holds for $\e=0$ and this is the main difference between our Condition~\ref{C1} and Non-resonance condition in \cite{BLS-AdvMath2019}.
Once we make the aforementioned rescaling of small edges for the model studied in \cite{Ital}, we arrive at an operator on the graph $\g_\infty$, which is independent of $\e$ and coincides with our operator $\Op_\infty$.
In this specific situation, in \cite{Ital}, there were considered two cases: generic and non-generic ones. The generic case corresponds to the absence of embedded eigenvalues. In the non-generic case, there can be both virtual level and embedded eigenvalues at the bottom of the essential spectrum. But since the model in \cite{Ital} was quite specific, it was possible to study the the convergence of the resolvent in the non-generic case as well.

In the present work we do not study the case when Condition~\ref{C1} is violated and the operator has an embedded eigenvalue at the bottom of the essential spectrum. The reason is that although our technique is sufficient to treat such case, it involves more technical details. The nature of the expected results also suggests that this case deserves an independent study, which we postpone for the next work.

\subsection{Generalizations of model}

Here we discuss
possible ways of extending our results to more general models.

The first
possibility concerns a case, when instead of one graph $\g_\e$, several similar small rescaled graphs $\g_\e^{(j)}$ are glued at
different vertices of the graph $\G$, see Figure~\ref{fig3}. Each of the small graphs $\g_\e^{(j)}$ can be introduced via rescaling by means of an associated small parameter $\e^{(i)}$; then coefficients $p_\e^{(j)}$, $q_\e^{(j)}$, $V_\e^{(j)}$ of the differential expression on each such graph $\g_\e^{(j)}$ are still defined by formula (\ref{2.1a}), but now
on each graph $\g_\e^{(j)}$ the rescaling is made by means of a corresponding operator $\cS_\e^{(j)}$. We can suppose first that all parameters $\e^{(j)}$ are analytic
functions in a single small parameter $\e$. In this situation, all our results remain valid with obvious minor changes; in particular, the form of the limiting vertex condition at a vertex $M_0^{(j)}$, to which the graph $\g_\e^{(j)}$ is attached, is determined only by the graph $\g_\e^{(j)}$ and there is no influence by the neighbouring ones.

\begin{figure}
\begin{center}
\includegraphics[scale=0.7]{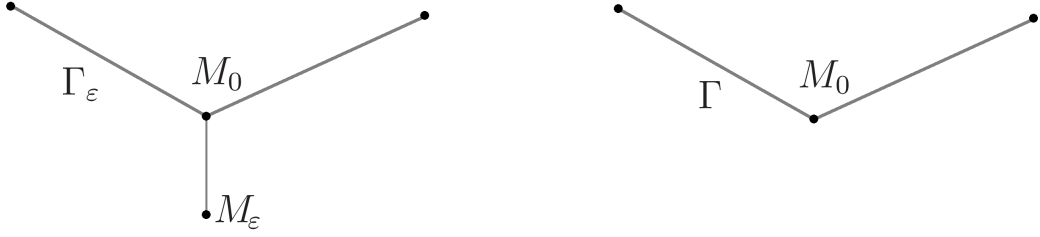}
\end{center}
\caption{{\small Star graph with a small edge and the limiting graph without the small edge.}}
\label{example}
\end{figure}

A more general situation is when in the same setting we assume that the parameters $\e^{(j)}$ are independent and all coefficients in the differential expression and vertex conditions of the operator $\Op_\e$ depend analytically in all $\e^{(j)}$. In such case, we first of all should assume additionally that for each attached graph $\g_\e^{(j)}$ and each vertex $M\in\g_\infty^{(j)}$, identities similar to (\ref{2.60a}) hold:
\begin{align*}
&\e^{(j)}\tilde{\rE}_M(\e^{(j)}) \rA_M(\e)=\big(\e^{(j)}\big)^\mu\big(\rA_M^{(0)}+\e^{(j)} \rA_M^{(1)}\big)+O(|\e|^{2+\mu}),
\\
&
\tilde{\rE}_M(\e^{(j)}) \rB_M(\e)=\big(\e^{(j)}\big)^\mu\big(\rB_M^{(0)}+\e^{(j)} \rB_M^{(1)}\big)+O(|\e|^{2+\mu}),
\end{align*}
where $\mu=0$
if $\rB_M(0)\ne0$ and $\mu=1$
if $\rB_M(0)=0$ and $\e=(\e^{(1)}, \e^{(2)},\ldots)$. For the coefficients of the differential expression we should assume that
\begin{equation}\label{2.64}
p_{\g^{(j)}}(\,\cdot\,,\e)=p_{\g^{(j)}}(\,\cdot\,,0) + \e^{(j)} \frac{d p_{\g^{(j)}}}{d\e^{(j)}}(\,\cdot\,,0) + O(|\e|^2),
\end{equation}
and the same should hold for the other coefficients.
Then the corresponding matrices $Q^{(j)}$ are to be calculated for each graph $\g^{(j)}$ by formulae (\ref{2.31}), (\ref{2.25a}), (\ref{2.25b}), (\ref{2.25c}). If for some $j$ the matrix $Q^{(j)}$ turns out to have a zero eigenvalue, then assumptions (\ref{2.60a}), (\ref{2.64})
should be replaced by stricter ones:
\begin{align*}
&\e^{(j)}\tilde{\rE}_M(\e^{(j)}) \rA_M(\e)=\big(\e^{(j)}\big)^\mu\big(\rA_M^{(0)}+\e^{(j)} \rA_M^{(1)}\big)+O(|\e|^{2+\mu}),
\\
&\tilde{\rE}_M(\e^{(j)}) \rB_M(\e)=\big(\e^{(j)}\big)^\mu\big(\rB_M^{(0)}+\e^{(j)} \rB_M^{(1)}\big)+O(|\e|^{2+\mu}),
\\
&p_{\g^{(j)}}(\,\cdot\,,\e)= p_{\g^{(j)}}(\,\cdot\,,0) + \e^{(j)} \frac{d p_{\g^{(j)}}}{d\e^{(j)}}(\,\cdot\,,0)+ \frac{\big(\e^{(j)}\big)^2}{2} \frac{d^2 p_{\g^{(j)}}}{d(\e^{(j)})^2}(\,\cdot\,,0) + O(|\e|^3).
\end{align*}
The above conditions mean that the coefficients in the differential expression and vertex conditions can depend on all small parameters, but the leading terms in their Taylor should be as specified above. Under such assumptions, all our results remain true and now the analyticity holds with respect to all parameters $\e^{(i)}$. Once these assumptions fail, in the general situation, the result on analyticity
 is no longer valid. The reason for this situation is in fact a well-known phenomenon in the multi-parametric perturbation theory, which says that the eigenvalues of a matrix analytic with respect to more than one small parameter are not necessary analytic, see
\cite[Ch. I\!I, Sect. 5.7]{Kato}. The same also concerns the case, when several graphs $\g^{(j)}$ are rescaled by means of different independent small parameters and are glued to the same vertex in the graph $\G$. In such general situation one can expect only a convergence result as one proved in \cite{BLS-AdvMath2019}. Of course, this does not exclude the situations, when for some particular graphs and specific choices of small edges, differential expression and vertex conditions, the resolvent has the analyticity property, but this can happen only due to some specific features of the considered models.

\begin{figure}
\begin{center}
\includegraphics[scale=0.7]{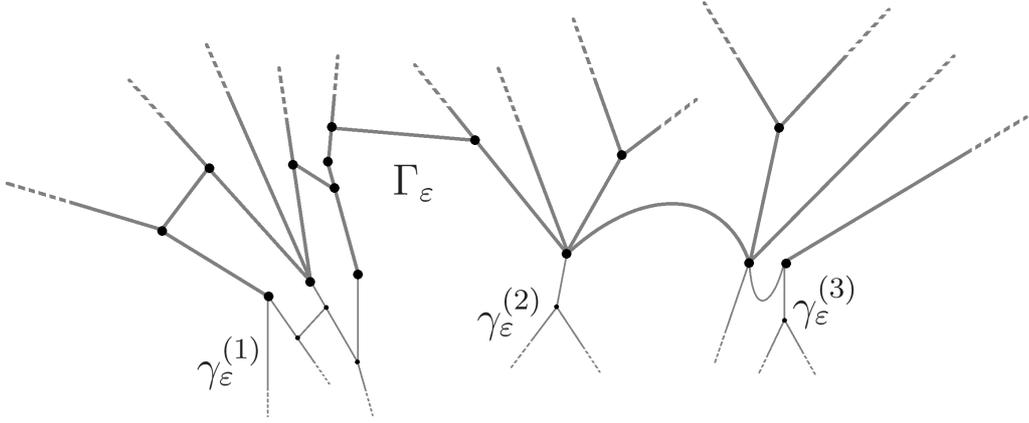}
\end{center}
\caption{{\small Graph $\G$ with several glued small graphs}}\label{fig3}
\end{figure}

One more important way of extending our model is to assume that all coefficients in the differential expression and the vertex conditions are not analytic in $\e$ but either infinite differentiable or just possess power asymptotic expansions in $\e$. In these cases all our results, including ones just discussed above for several glued small graphs, remain true with appropriate modifications. Namely, if the mentioned coefficients are infinite differentiable in $\e$, then the statement on the analyticity property is to be replaced by the infinite differentiability in $\e$ for the operators $\cR_\G$ and $\cR_\g$. If the coefficients in the differential expression and the vertex conditions just possess power asymptotic expansions in $\e$, then the operators $\cR_\G$ and $\cR_\g$ also have power asymptotic expansions in $\e$. The way of proving such results is very simple and is based on the analyticity case treated above. Namely, we represent the matrices in the vertex conditions as
\begin{equation}\label{2.68}
\rA_M(\e)=\rA_M^{(0)}+\e \rA_M^{(1)} + \e^2 \rA_M^{(2)} + \e^3 \breve{\rA}_M(\e),
\qquad
\rB_M(\e)=\rB_M^{(0)}+\e \rB_M^{(1)} + \e^2 \rB_M^{(2)} + \e^3 \breve{\rB}_M(\e),
\end{equation}
where the matrices $\breve{\rA}_M(\e)$ and $\breve{\rB}_M(\e)$ are either infinitely differentiable in $\e$ or have power in $\e$ asymptotic expansions. For the coefficients of the differentiable expression, a similar representation is to be employed. The terms $\breve{\rA}_M$ and $\breve{\rB}_M$ are to be treated as frozen coefficients independent of $\e$ and the matrices in (\ref{2.68}) should be treated as second order polynomials in $\e$. Then it is possible to track the dependence on $\breve{\rA}_M$ and $\breve{\rB}_M(\e)$ and on similar terms for the coefficients in the differential expression in the proof of Theorem~\ref{th1}. As a result, then one can see that the operators $\cR_\G$ and $\cR_\g$ as well as the resolvent $(\Op_\e-\l)^{-1}$ are represented by convergent series (\ref{3.45}) and (\ref{2.36}), but now with coefficients depending on $\e$. These coefficients are either infinitely differentiable in $\e$ or possess power in $\e$ asymptotic expansions. In the first case we conclude immediately that all the operators in question are infinitely differentiable in $\e$, while in the other case we can easily find the asymptotic expansions for these operators from their series.

\section{Self-adjointness}

Our main result includes a statement on the self-adjointness of both perturbed and limiting operators and in the proofs, we shall also use the same property for certain auxiliary operators.
In this section we first establish a criterion of the self-adjointness for a general operator. Then we shall employ it to prove the same property for the operators $\Op_0$ and $\Op_\g$.

\subsection{Criterion of self-adjointness}

Let $\Xi$ be a finite metric graph having no isolated vertices; the lengths of its edges can be both finite or infinite. On each edge we fix arbitrarily an orientation and as above, by $d(M)$ we denote the degree of a vertex $M\in\Xi$.

We consider an unbounded operator $\Op$ in $L_2(\Xi)$ with the differential expression
\begin{equation*}
\hat{\Op}u:=-\frac{d\ }{dx} p\frac{d u}{dx} + \iu \left(
\frac{d\ }{dx} (q u) + q\frac{d u}{dx}
\right)+ V u
\end{equation*}
subject to the vertex conditions
\begin{equation}\label{7.2}
\rA_M\cU_M(u) +\rB_M \cU_M'(u)=0\quad\text{at each vertex $M$}.
\end{equation}
Here $p,\,q\in\Hinf^1(\Xi)$, $V\in L_2(\Xi)$ are real-valued functions, while $\rA_M$ and $\rB_M$ are some matrices of size $d(M)\times d(M)$. The ellipticity condition is assumed:
$p\geqslant c>0$ uniformly on $\Xi$.
The domain of the operator $\Op$ consists of the functions in $\dH^2(\Xi)$ obeying vertex condition (\ref{7.2}).

By $\Pi_M$, $\Th_M$ we denote the following diagonal matrices with real entries:
\begin{equation*}
\Pi_M:=\diag\big\{\nu_i(M) p\big|_{\ed_i(M)}(M,\e)\big\}_{i=1,\ldots,d(M)}, \qquad
\Th_M:=\diag\big\{\nu_i(M) q\big|_{\ed_i(M)}(M,\e)\big\}_{i=1,\ldots,d(M)}, \end{equation*}
where $\ed_i(M)$ are the edges incident to the vertex $M$ and the numbers $\nu_i(M)$ are defined in the same way as in (\ref{2.4b}).

The main statement of this subsection is the following lemma.

\begin{lemma}\label{lm7.1}
The operator $\Op$ is self-adjoint if and only if for each vertex $M\in\Xi$ the matrices $\rA_M$ and $\rB_M$ satisfy the rank condition
\begin{equation}\label{7.3}
\rank (\rA_M\ \rB_M)=d(M)
\end{equation}
and the matrix
\begin{equation}\label{7.4}
\rA_M\Pi_M^{-1} \rB_M^* +\iu \rB_M\Pi_M^{-1}\Th_M \Pi_M^{-1}\rB_M^*
\end{equation}
is self-adjoint. Under these restrictions, vertex condition (\ref{7.2}) can be equivalently rewritten as
\begin{equation}\label{7.6}
\iu (\rU_M-\rI_{d(M)})\cU_M(u)+(\rU_M+\rI_{d(M)}) \big(\Pi_M\cU_M'(u)
-\iu \Th_M\cU_M(u)\big)=0,
\end{equation}
where the matrix
\begin{equation*}
\rU_M:=-\big(\rA_M+\iu \rB_M\Pi_M^{-1}(\Th_M-\rI_{d(M)})
\big)^{-1}\big(\rA_M+\iu \rB_M\Pi_M^{-1}(\Th_M+\rI_{d(M)})\big)
\end{equation*}
is well-defined and is unitary.
\end{lemma}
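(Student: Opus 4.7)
The plan is to reduce both assertions to the standard symplectic criterion for self-adjoint extensions on graphs, by absorbing the variable coefficient $p$ and the first-order coefficient $q$ into a single generalized normal derivative
$$\tilde{\cU}_M'(u):=\Pi_M\cU_M'(u)-\iu\Th_M\cU_M(u).$$
First I would establish the Green's formula for $\Op$. On each edge this is a routine integration by parts; on infinite edges the boundary term at infinity vanishes since functions in $\dH^2$ and their first derivatives decay by Sobolev embedding. Summing over edges and regrouping contributions at each vertex via the sign convention encoded in $\nu_i(M)$, I obtain
$$(\hat{\Op}u,v)_{L_2(\Xi)}-(u,\hat{\Op}v)_{L_2(\Xi)}=\sum_{M\in\Xi}\Big[(\Pi_M\cU_M'(u),\cU_M(v))-(\cU_M(u),\Pi_M\cU_M'(v))-2\iu(\Th_M\cU_M(u),\cU_M(v))\Big],$$
and a short algebraic manipulation shows the bracket equals $(\tilde{\cU}_M'(u),\cU_M(v))-(\cU_M(u),\tilde{\cU}_M'(v))$, i.e.\ the canonical symplectic form in the coordinates $(\cU_M,\tilde{\cU}_M')$.

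Next I would rewrite the vertex condition (\ref{7.2}) in these new coordinates. Substituting $\cU_M'(u)=\Pi_M^{-1}(\tilde{\cU}_M'(u)+\iu\Th_M\cU_M(u))$ transforms it into $\tilde{\rA}_M\cU_M(u)+\tilde{\rB}_M\tilde{\cU}_M'(u)=0$ with $\tilde{\rA}_M:=\rA_M+\iu\rB_M\Pi_M^{-1}\Th_M$ and $\tilde{\rB}_M:=\rB_M\Pi_M^{-1}$. Because $\Pi_M$ is invertible, $(\tilde{\rA}_M\ \tilde{\rB}_M)$ differs from $(\rA_M\ \rB_M)$ by right-multiplication by an invertible block matrix, so the two ranks coincide; a direct computation yields $\tilde{\rA}_M\tilde{\rB}_M^*=\rA_M\Pi_M^{-1}\rB_M^*+\iu\rB_M\Pi_M^{-1}\Th_M\Pi_M^{-1}\rB_M^*$, matching (\ref{7.4}), so the two self-adjointness statements are equivalent. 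The problem is thereby placed in the classical form with constant weights.

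At this stage I apply the standard symplectic criterion: the subspace $\cL_M:=\{(X,Y)\in\mathds{C}^{2d(M)}:\tilde{\rA}_MX+\tilde{\rB}_MY=0\}$ must be Lagrangian for $\omega((X,Y),(X',Y')):=(Y,X')-(X,Y')$. Lagrangianity splits into isotropy, equivalent to self-adjointness of $\tilde{\rA}_M\tilde{\rB}_M^*$ (hence of (\ref{7.4})), and maximality $\dim\cL_M=d(M)$, equivalent to the rank condition (\ref{7.3}). The delicate step, which I expect to be the main obstacle, is showing that symmetry of $\Op$ actually forces isotropy of the \emph{entire} $\cL_M$, not merely of those pairs realized by some $u\in\Dom(\Op)$; for this I would construct, given any prescribed $(X,Y)\in\cL_M$ at a fixed vertex $M$, a function $u\in\dH^2(\Xi)$ obeying all vertex conditions and having the prescribed values of $\cU_M(u)$ and $\tilde{\cU}_M'(u)$. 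Such $u$ is produced by an explicit smooth interpolation on the edges incident to $M$, multiplied by a cutoff that vanishes identically before reaching any neighbouring vertex, so the remaining vertex conditions are trivially satisfied.

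Finally, assuming both conditions hold, I would derive the unitary form. The self-adjointness of $\tilde{\rA}_M\tilde{\rB}_M^*$ combined with the rank condition implies invertibility of $\tilde{\rA}_M-\iu\tilde{\rB}_M$: if $(\tilde{\rA}_M-\iu\tilde{\rB}_M)w=0$, then self-adjointness forces $\tilde{\rB}_Mw=0$ and hence $\tilde{\rA}_Mw=0$, contradicting the rank condition. The matrix $\rU_M:=-(\tilde{\rA}_M-\iu\tilde{\rB}_M)^{-1}(\tilde{\rA}_M+\iu\tilde{\rB}_M)$ is unitary via the identity $(\tilde{\rA}_M+\iu\tilde{\rB}_M)(\tilde{\rA}_M+\iu\tilde{\rB}_M)^*=(\tilde{\rA}_M-\iu\tilde{\rB}_M)(\tilde{\rA}_M-\iu\tilde{\rB}_M)^*$, itself a direct consequence of $\tilde{\rA}_M\tilde{\rB}_M^*=\tilde{\rB}_M\tilde{\rA}_M^*$. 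The formula $\tilde{\rA}_M\pm\iu\tilde{\rB}_M=\rA_M+\iu\rB_M\Pi_M^{-1}(\Th_M\pm\rI_{d(M)})$ identifies $\rU_M$ with the expression in the statement. Rewriting $\tilde{\rA}_M=\tfrac{1}{2}[(\tilde{\rA}_M-\iu\tilde{\rB}_M)+(\tilde{\rA}_M+\iu\tilde{\rB}_M)]$ and similarly for $\tilde{\rB}_M$, the condition $\tilde{\rA}_M\cU_M(u)+\tilde{\rB}_M\tilde{\cU}_M'(u)=0$ becomes, after left-multiplication by $2(\tilde{\rA}_M-\iu\tilde{\rB}_M)^{-1}$, exactly (\ref{7.6}).
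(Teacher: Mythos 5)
Your proposal is correct and follows essentially the same route as the paper: both introduce the generalized normal derivative $\Pi_M\cU_M'(u)-\iu\Th_M\cU_M(u)$, rewrite the vertex conditions with the modified matrices $\tilde{\rA}_M=\rA_M+\iu\rB_M\Pi_M^{-1}\Th_M$, $\tilde{\rB}_M=\rB_M\Pi_M^{-1}$, and thereby reduce everything to the classical criterion for $p\equiv1$, $q\equiv0$. The only difference is that the paper cites Theorem~1.4.4 of the Berkolaiko--Kuchment book for the resulting standard statement, whereas you sketch its proof (the Lagrangian-plane argument and the Cayley-transform unitarity) directly.
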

\begin{proof}
In a particular case $p\equiv 1$, $q\equiv0$, $V\equiv0$, the lemma states a well-known result, see Theorem~1.1.4 in \cite[Ch. 1, Sect. 1.4.]{BK-book}. In fact, the proof of this theorem can be adapted also for our more general case up to some minor changes. Below we describe how to do this.

First of all we observe that vertex condition (\ref{7.2}) can be equivalently rewritten as
\begin{gather}\label{7.9a}
\tilde{\rA}_M \cU_M(u)+\tilde{\rB}_M
\big(\Pi_M\cU_M'(u) -\iu\Th_M\cU_M(u)\big)=0,
\\
\tilde{\rA}_M:=\rA_M+\iu \rB_M \Pi_M^{-1} \Th_M,\qquad \tilde{\rB}_M:=\rB_M \Pi_M^{-1}.
\label{7.9b}
\end{gather}
Let $u$ be an arbitrary function from the domain of the operator $\Op$ and $v$ be an arbitrary function from $\bigoplus\limits_{\ed\in\Xi} C^\infty(\ed)$ vanishing outside a small neighbourhood of an arbitrarily fixed vertex $M\in\Xi$. Integrating twice by parts, we get:
\begin{align*}
(\Op u,v)_{L_2(\Xi)}=&\big(\Pi_M\cU_M'(u),\cU_M(v)\big)_{\mathds{C}^{d(M)}} - \big(\cU_M(u),\Pi_M\cU_M'(v)\big)_{\mathds{C}^{d(M)}}+(u,\hat{\Op} v)_{L_2(\Xi)}
\\
=&(\mathsf{F}_M',\mathsf{G}_M)_{\mathds{C}^{d(M)}}-(\mathsf{F}_M,\mathsf{G}_M')_{\mathds{C}^{d(M)}}
+ (u,\hat{\Op} v)_{L_2(\Xi)},
\end{align*}
where
\begin{align*}
&\mathsf{F}_M':=\Pi_M\cU_M'(u)-\iu\Th_M\cU_M(u), \qquad \mathsf{G}_M:=\cU_M(v),
\\
&\mathsf{F}_M:=\cU_M(u),\qquad \mathsf{G}_M':=\Pi_M\cU_M'(v)-\iu\Th_M\cU_M(v).
\end{align*}
Hence, the function $v$ is in the domain of the adjoint operator $\Op^*$ if
\begin{equation*}
(\mathsf{F}_M',\mathsf{G}_M)_{\mathds{C}^{d(M)}}-(\mathsf{F}_M,\mathsf{G}_M')_{\mathds{C}^{d(M)}}=0.
\end{equation*}
The latter identity is exactly equation (1.4.16) in the proof of Theorem~1.1.4 in \cite[Ch. 1, Sect. 1.4]{BK-book} but with different $\mathsf{F}_M$, $\mathsf{F}_M'$, $\mathsf{G}_M$, $\mathsf{G}_M'$. However, the specific form of these quantities were not important in that proof in \cite{BK-book}. This is why the arguing from \cite{BK-book} can be reproduced literally with above introduced $\mathsf{F}_M$, $\mathsf{F}_M'$, $\mathsf{G}_M$, $\mathsf{G}_M'$; as the matrices $A$ and $B$ used in that proof, our matrices $\tilde{\rA}_M$ and $\tilde{\rB}_M$ serve. Rank condition
(\ref{7.3}) is equivalent to similar condition (1.4.10) in \cite[Ch. 1, Sect. 1.4]{BK-book} since thanks to the non-degeneracy of the matrix $\Pi_M^{-1}$ we have
\begin{align*}
\rank (\tilde{\rA}_M\ \;\tilde{\rB}_M)=&\rank\big(
\rA_M+\iu \rB_M \Pi_M^{-1} \Th_M\ \;\rB_M \Pi_M^{-1}\big)
\\
=&\rank\big(
\rA_M \ \;\rB_M \Pi_M^{-1}\big)=\rank\big(
\rA_M \ \;\rB_M\big)=d(M).
\end{align*}
Condition (1.4.11) for the matrices $A$ and $B$ from Theorem~1.1.4 in \cite[Ch. 1, Sect. 1.4.]{BK-book} is exactly our condition (\ref{7.4}), while equivalent formulation
 (\ref{7.6})
 of the vertex condition are implied immediately by similar formulations (1.4.13)
 in \cite{BK-book}. The proof is complete.
\end{proof}

\subsection{Self-adjointness of operators $\Op_0$ and $\Op_\infty$}\label{sec6.1}

In this subsection we prove that under the assumptions of
Theorem~\ref{th1} the operators $\Op_0$ and $\Op_\infty$ are self-adjoint. In order to prove this statement for the operator $\Op_0$,
we shall need the self-adjointness of the matrix $\rQ$ involved in the matrix $\rA_{M_0}^{(0)}$, see (\ref{2.32}). This fact is provided by the following lemma, which will be independently proved later in a separate subsection, see Subsection~\ref{secQ}.
\begin{lemma}\label{lmQ}
The matrix $\rQ$ is self-adjoint.
\end{lemma}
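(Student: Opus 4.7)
\textbf{Plan of proof of Lemma \ref{lmQ}.} The plan is to split $\rQ=\rQ_\g+\sum_{M\in\g_\infty}\rQ_M$ with $(\rQ_\g)_{ij}:=Q_\g^{(ij)}$ and $(\rQ_M)_{ij}:=Q_M^{(ij)}$, and verify that each summand is a Hermitian matrix. This separation is natural, because $\rQ_\g$ collects the bulk contributions obtained from the equation for $v_1$ in (\ref{2.42}) by pairing against $\psi^{(j)}$, while each $\rQ_M$ collects the boundary terms at the vertex $M$ that arise from integration by parts.

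The bulk term $\rQ_\g$ is Hermitian by direct inspection. Since $\frac{dp_\g}{d\e}(\cdot,0)$ and $\frac{dV_\g}{d\e}(\cdot,0)$ are real, the first and fourth summands in (\ref{2.25a}) give manifestly Hermitian forms in the indices $(i,j)$. The second and third summands interchange under complex conjugation and swapping $i\leftrightarrow j$, because $\overline{(f,\iu g)}_{L_2(\g)}=(\iu g,f)_{L_2(\g)}$ and $\frac{dq_\g}{d\e}(\cdot,0)$ is real; hence their sum is Hermitian as well.

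The vertex contribution $\rQ_M$ is the main obstacle. The plan is to rewrite $Q_M^{(ij)}$ entirely in terms of the canonical boundary data $\cU_M(\psi^{(i)})$ and $\cV_M(\psi^{(i)})=\Pi_{\g,M}(0)\cU_M'(\psi^{(i)})-\iu\Th_{\g,M}(0)\cU_M(\psi^{(i)})$, and then exploit the unitary form of the vertex condition granted by Lemma~\ref{lm7.1} applied to $\Op_\infty$. Namely, condition (\ref{7.6}) for $\Op_\infty$ reads $\iu(\rU_M^{(0)}-\rI_{d(M)})\cU_M(\psi^{(i)})+(\rU_M^{(0)}+\rI_{d(M)})\cV_M(\psi^{(i)})=0$, with $\rU_M^{(0)}$ unitary. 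Hence $\rP_M^{(0)}\cU_M(\psi^{(i)})=0$ and $\rP_{M,\bot}^{(0)}\cV_M(\psi^{(i)})$ is determined by $\rP_{M,\bot}^{(0)}\cU_M(\psi^{(i)})$ via the self-adjoint operator $T_M:=-\iu(\rU_M^{(0)}+\rI_{d(M)})^{-1}(\rU_M^{(0)}-\rI_{d(M)})$ restricted to $\mathrm{Ran}\,\rP_{M,\bot}^{(0)}$, which is Hermitian since $\rU_M^{(0)}$ is unitary. Substituting these relations into (\ref{2.25b})--(\ref{2.25c}) collapses the first summand $(\cL_M(\psi^{(i)}),\cU_M(\psi^{(j)}))$ into a diagonal-real part (involving $\frac{d\Pi_{\g,M}}{d\e}(0)$ and $\frac{d\Th_{\g,M}}{d\e}(0)$) that is manifestly Hermitian, plus a cross term involving $(\rU_M^{(0)}+\rI_{d(M)})^{-1}\rP_{M,\bot}^{(0)}\cQ_M(\psi^{(i)})$.

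The remaining delicate step is to match this cross term with the sesquilinear form $-\frac{\iu}{2}(\cQ_M(\psi^{(i)}),\cV_M(\psi^{(j)}))$. Here one uses the self-adjointness of the matrix (\ref{2.4a}) for all small $\e$: differentiating this Hermiticity identity at $\e=0$ yields a compatibility relation between $\rA_M^{(1)},\rB_M^{(1)}$ and $\rA_M^{(0)},\rB_M^{(0)}$ (together with the $\e$-derivatives of $\Pi_M$ and $\Th_M$). This is precisely the relation that forces the anti-Hermitian part of the $\cQ_M$-contribution to cancel against the residual cross term from $\cL_M$; hence $\rQ_M=\rQ_M^*$. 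The main difficulty is the bookkeeping in this last cancellation, for which the unitary decomposition via $\rP_M^{(0)}, \rP_{M,\bot}^{(0)}$ and the explicit form (\ref{2.29}) of $\cQ_M$ are essential. Summing over $M\in\g_\infty$ and combining with the bulk identity then gives $\rQ=\rQ^*$, completing the proof.
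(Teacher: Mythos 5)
Your plan correctly identifies the decomposition $\rQ=\rQ_\g+\sum_{M}\rQ_M$, the Hermiticity of the bulk block $\rQ_\g$ (which follows from the reality of the coefficients exactly as you say), and the two structural facts that drive the vertex blocks: $\rP_M^{(0)}\cU_M(\psi^{(i)})=0$ and the self-adjoint relation between $\rP_{M,\bot}^{(0)}\cV_M(\psi^{(i)})$ and $\rP_{M,\bot}^{(0)}\cU_M(\psi^{(i)})$ coming from the unitary form of the vertex condition. Up to that point you are following the same road as the paper (Subsection~\ref{secQ}).

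The gap is that the decisive step --- showing that the anti-Hermitian part of $-\frac{\iu}{2}\big(\cQ_M(\psi^{(i)}),\cV_M(\psi^{(j)})\big)$ cancels against the residual cross term produced by $\cL_M$ --- is asserted rather than performed, and this cancellation is the entire content of the lemma. Moreover, the route you propose for it (differentiating the Hermiticity of the matrix in (\ref{2.4a}) at $\e=0$) does not by itself produce the objects in terms of which the cancellation actually happens. The paper instead Taylor-expands identity (\ref{5.12}), i.e.\ the vertex condition written through the analytic unitary family $\rU_M(\e)$, its spectral projector $\rP_M(\e)$ onto the eigenvalues converging to $-1$, and the self-adjoint matrices $\rK_M(\e)$, $\rK_{M,\bot}(\e)$; this yields the explicit formulas (\ref{5.9}), (\ref{5.10}) for $\rP_M^{(0)}\cQ_M(\psi^{(i)})$ and $(\rU_M^{(0)}+\rI_{d(M)})^{-1}\rP_{M,\bot}^{(0)}\cQ_M(\psi^{(i)})$, and then the manifestly Hermitian expression (\ref{5.22a}) for $Q_M^{(ij)}$. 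That expression involves the derivative $\rP_{M,1}$ of the \emph{moving eigenprojector} together with $\rK_{M,1}$, $\rK_{M,\bot,1}$, and its Hermiticity rests on $\rP_{M,1}^*=\rP_{M,1}$, the intertwining relations $\rP_{M,\bot}^{(0)}\rP_{M,1}=\rP_{M,1}\rP_M^{(0)}$ of Lemma~\ref{lm7.2}, and the self-adjointness of $\rK_{M,1}$, $\rK_{M,\bot,1}$. None of these objects appears in your accounting, so the ``bookkeeping'' you defer is not routine: you would have to rediscover precisely this first-order perturbation structure. A further point you gloss over: at vertices with $\rB_M(0)=0$ the relevant coefficient in the expansion of (\ref{5.12}) is the one at $\e^2$, not $\e^1$, so a first-order differentiation of (\ref{2.4a}) cannot suffice there.
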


Let us show that the operators $\Op_0$ and $\Op_\infty$ are self-adjoint.
The matrix in (\ref{2.4a}) is self-adjoint as $\e=0$ for each vertex $M\in\G$, $M\ne M_0$. Hence, the matrices $\rA_M^{(0)}$, $\rB_M^{(0)}$ satisfy condition (\ref{7.3}), (\ref{7.4}) for each vertex $M\in\G$, $M\ne M_0$. Thanks to the self-adjointness of the matrix $\rQ$ stated in Lemma~\ref{lmQ}, the self-adjointness of the matrix in (\ref{7.4}) corresponding to the matrices $\rA_{M_0}^{(0)}$ and $\rB_{M_0}^{(0)}$ is confirmed by simple straightforward calculations. In view of the non-degeneracy of the matrix $\Pi_{\G,M_0}(0)$, condition (\ref{7.3}) corresponding to the same matrices is checked as follows:
\begin{align*}
\rank\big(\rA_{M_0}^{(0)}\ \rB_{M_0}^{(0)}\big)=&\rank \Big(\rA_{M_0}^{(0)}+\iu\rB_{M_0}^{(0)} \Pi_{\G,M_0}^{-1}(0)\Th_{\G,M_0}(0)\Psi\quad -\rB_{M_0}^{(0)}\Pi_{\G,M_0}^{-1}(0)\Psi\Big)
\\
=&\rank \left(\begin{pmatrix}
\rQ & 0
\\
0 & \rI_{d_0-k}
\end{pmatrix}\quad \begin{pmatrix}
\rI_k & 0
\\
0 & 0
\end{pmatrix}\right)=d_0.
\end{align*}
Hence, we can apply Lemma~\ref{lm7.1} and we see that the operator $\Op_0$ is self-adjoint.

We proceed to proving the self-adjointness of the operator $\Op_\infty$. In view of the assumption on the matrices $\rA_M(0)$ and $\rB_M(0)$ made in Subsection~\ref{sec2.1}, see the explanation after (\ref{2.9}), we see easily that rank condition (\ref{2.9rank}) implies the same for the matrices $\rA_M^{(0)}$ and $\rB_M^{(0)}$ defined in (\ref{2.11b}).
Then we multiply the matrix in (\ref{2.4a}) by $\e$ and then by $\tilde{\rE}_M(\e)$ from left and right keeping its self-adjointness and then we pass to the limit as $\e\to+0$. By identities (\ref{2.60a}) we then arrive to the self-adjointness of the matrix
\begin{equation*}
\rA_M^{(0)}\Pi_{\g,M}^{-1}(0) \big(\rB_M^{(0)}\big)^*+\iu \rB_M^{(0)}\Pi_{\g,M}^{-1}(0)\Th_{\g,M}(0) \Pi_{\g,M}^{-1}(0)\big(\rB_M^{(0)}\big)^*.
\end{equation*}
Applying now Lemma~\ref{lm7.1}, we conclude that the operator $\Op_\infty$ is self-adjoint.

\subsection{Self-adjointness of $\rQ$}\label{secQ}

In this section we prove Lemma~\ref{lmQ}. The proof is based on obtaining certain representations for the operators $\cL_M$ and $\cQ_M$ used in (\ref{2.25b}), (\ref{2.25c}), (\ref{2.29}), which will imply then the desired self-adjointness. These representations are based on an appropriate transformation of vertex conditions (\ref{2.9}) at $M\in\g_\e$ and we begin our proof with describing them.

Given a vertex $M\in\g_\e$, including vertices $M=M_j$, $j=1,\ldots,n$, we rescale the variables as $\xi=x\e^{-1}$ passing in fact to the graph $\g_\infty$. Then vertex condition (\ref{2.9}) becomes
\begin{equation}\label{5.2}
\e\rA_M(\e)\cU_M(u)+\rB_M(\e)\cU_M'(u)=0,
\end{equation}
where the vectors $\cU_M(u)$ and $\cU_M'(u)$ are introduced as above but are treated in terms of the variable $\xi$. According Lemma~\ref{lm7.1}, at each vertex $M\in\g_\infty$ the above vertex condition can be equivalently rewritten as
\begin{equation}\label{4.2}
\iu (\rU_M(\e)-\rI_{d(M)})\cU_M(u)+(\rU_M(\e)+\rI_{d(M)}) \big(\Pi_{\g,M}(\e)\cU_M'(u)
-\iu \Th_{\g,M}(\e)\cU_M(u)\big)=0,
\end{equation}
where, we recall, the matrices $\Pi_{\g,M}$ and $\Th_{\g,M}$ were defined in (\ref{2.26a}), and
\begin{equation}
\label{2.5a}
\begin{aligned}
\rU_M(\e):=-&\Big(\e\rA_M(\e)+\iu \rB_M(\e)\Pi_{\g,M}^{-1}(\e)
\big( \Th_{\g,M}M(\e)-\rI_{d(M)}\big)\Big)^{-1}
\\
&\cdot\Big(\e\rA_M(\e)+\iu \rB_M(\e)\Pi_{\g,M}^{-1}(\e)
\big( \Th_{\g,M}(\e)+\rI_{d(M)}\big)\Big)
\end{aligned}
\end{equation}
is an unitary matrix.

\begin{lemma}\label{lm4.2a}
 For all $M\in\G$, $M\ne M_0$ and all $M\in\g_\infty$, the matrix $\rU_M(\e)$ is analytic in $\e$.
\end{lemma}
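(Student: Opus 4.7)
The plan is to treat the two cases $M\in\G\setminus\{M_0\}$ and $M\in\g_\infty$ separately, since the formula for $\rU_M(\e)$ has slightly different structure in each. Write
\[
\rM_\pm(\e):=\e\rA_M(\e)+\iu\rB_M(\e)\Pi_{\g,M}^{-1}(\e)\bigl(\Th_{\g,M}(\e)\pm\rI_{d(M)}\bigr)\qquad (M\in\g_\infty),
\]
and the same expression, but without the prefactor $\e$ on $\rA_M$ and with $\Pi_{\G,M},\Th_{\G,M}$ in place of $\Pi_{\g,M},\Th_{\g,M}$, for $M\in\G\setminus\{M_0\}$. Then in either case $\rU_M(\e)=-\rM_-(\e)^{-1}\rM_+(\e)$, so the whole task reduces to showing that $\rM_-(\e)^{-1}$ is analytic in $\e$ near zero.

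For $M\in\G\setminus\{M_0\}$ the matrices $\rA_M(\e)$, $\rB_M(\e)$, $\Pi_{\G,M}(\e)$, $\Th_{\G,M}(\e)$ are all analytic in $\e$ by the standing assumptions, so $\rM_\pm(\e)$ is analytic. The operator $\Op_0$ is self-adjoint by Section~\ref{sec6.1}; applying Lemma~\ref{lm7.1} to $\Op_0$ at this vertex, the matrix $\rM_-(0)$ is non-degenerate. Analyticity together with non-degeneracy at $\e=0$ force $\rM_-(\e)^{-1}$ to be analytic for all sufficiently small $\e$, and the analyticity of $\rU_M(\e)$ follows.

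The main obstacle arises in the case $M\in\g_\infty$: here $\rB_M(0)$ may have rank $r(M)<d(M)$, so that $\rM_-(\e)$ is itself singular at $\e=0$ and cannot be inverted directly. The remedy is to exploit the freedom of left-multiplying the vertex condition by a non-degenerate matrix, specifically by the rescaling matrix $\tilde{\rE}_M(\e)=\diag\bigl(\rI_{r(M)},\e^{-1}\rI_{d(M)-r(M)}\bigr)$ introduced before (\ref{2.60a}). Setting $Z_\pm(\e):=\tilde{\rE}_M(\e)\rM_\pm(\e)$, the identities (\ref{2.60a}) yield
\[
\tilde{\rE}_M(\e)\cdot\e\rA_M(\e)=\rA_M^{(0)}+O(\e),\qquad \tilde{\rE}_M(\e)\rB_M(\e)=\rB_M^{(0)}+O(\e),
\]
so $Z_\pm(\e)$ is analytic in $\e$, with
\[
Z_\pm(0)=\rA_M^{(0)}+\iu\rB_M^{(0)}\Pi_{\g,M}^{-1}(0)\bigl(\Th_{\g,M}(0)\pm\rI_{d(M)}\bigr)=\tilde{\rA}_M^{(0)}\pm\iu\tilde{\rB}_M^{(0)}.
\]
The self-adjointness of $\Op_\infty$ (Section~\ref{sec6.1}) together with Lemma~\ref{lm7.1} guarantees that $Z_-(0)$ is non-degenerate, so $Z_-(\e)^{-1}$ is analytic near zero. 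From $\rM_-(\e)=\tilde{\rE}_M(\e)^{-1}Z_-(\e)$ one obtains $\rM_-(\e)^{-1}=Z_-(\e)^{-1}\tilde{\rE}_M(\e)$, and hence
\[
\rU_M(\e)=-\rM_-(\e)^{-1}\rM_+(\e)=-Z_-(\e)^{-1}\bigl(\tilde{\rE}_M(\e)\rM_+(\e)\bigr)=-Z_-(\e)^{-1}Z_+(\e),
\]
a product of two analytic matrix-valued functions, hence analytic. The essence of the argument is that although $\tilde{\rE}_M(\e)$ itself has a pole at $\e=0$, it enters via $\rM_-^{-1}=Z_-^{-1}\tilde{\rE}_M$ and immediately recombines with the same singular factor hidden inside $Z_+=\tilde{\rE}_M\rM_+$, leaving behind an analytic object.
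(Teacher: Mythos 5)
Your proof is correct, but it takes a genuinely different route from the paper's. The paper argues softly: by the rank condition, formulae (\ref{2.60a}) and the analyticity of $\rA_M(\e)$, $\rB_M(\e)$, the matrix $\rU_M(\e)$ is \emph{meromorphic} in $\e$; since Lemma~\ref{lm7.1} makes it unitary for each small $\e$, it is uniformly bounded, so the putative pole at $\e=0$ is removable and analyticity follows. You instead make the cancellation explicit: factoring $\rU_M=-\rM_-^{-1}\rM_+=-Z_-^{-1}Z_+$ with $Z_\pm=\tilde{\rE}_M\rM_\pm$, you show the singular factor $\tilde{\rE}_M(\e)$ recombines and that $Z_-(0)=\tilde{\rA}_M^{(0)}-\iu\tilde{\rB}_M^{(0)}$ is invertible. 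The paper's argument is shorter and needs no computation beyond what Lemma~\ref{lm7.1} already provides; yours is more constructive (it exhibits $\rU_M(\e)$ as an explicit quotient of analytic matrices and would, for instance, let one read off the Taylor coefficients of $\rU_M$ directly from those of $\rA_M$, $\rB_M$), and it does not use unitarity at all, only the invertibility of $Z_-(0)$. Both are legitimate.

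One caveat on citations rather than mathematics: for $M\in\G\setminus\{M_0\}$ you justify the non-degeneracy of $\rM_-(0)$ by invoking the self-adjointness of $\Op_0$ from Section~\ref{sec6.1}. That creates an apparent circularity, because the self-adjointness of $\Op_0$ rests on Lemma~\ref{lmQ}, whose proof uses the present lemma (the analytic projectors $\rP_M(\e)$ and the transforming function $\rS_M(\e)$ require the analyticity of $\rU_M(\e)$). The fix is immediate: for $M\ne M_0$ the per-vertex hypotheses of Lemma~\ref{lm7.1} --- the rank condition and the self-adjointness of the matrix (\ref{2.4a}) at $\e=0$ --- hold directly by the standing assumptions on $\Op_\e$, and these alone give the well-definedness of $\rU_M(0)$, i.e.\ the invertibility of $\rM_-(0)$, without any appeal to $\Op_0$. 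The appeal to the self-adjointness of $\Op_\infty$ in the case $M\in\g_\infty$ is unproblematic, since that is established in Section~\ref{sec6.1} independently of Lemma~\ref{lmQ}.
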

\begin{proof} In view of
rank condition (\ref{2.9rank}) and formulae (\ref{2.60a}) and owing to the analyticity of
$\rA_M(\e)$ and $\rB_M(\e)$
in $\e$, the matrix $\rU_M(\e)$ is meromorphic in $\e$. At the same time, by the unitarity of $\rU_M(\e)$
we have $\|\rU_M(\e)\rx\|_{\mathds{C}^{d(M)}}= \|\rx\|_{\mathds{C}^{d(M)}}$ for all $\rx\in \mathds{C}^{d(M)}$ and sufficiently small $\e$. Hence, the matrix $\rU_M(\e)$ is bounded uniformly in $\e$ and can not have poles. Therefore, it is analytic in $\e$. The proof is complete.
\end{proof}

Thanks to the unitarity of the matrix $\rU_M(\e)$, by the results in \cite[Ch. I\!I, Sec. 4.6]{Kato}, the eigenvalues and the associated orthonormalized in $\mathds{C}^{d(M)}$ vectors of the matrix $\rU_M(\e)$ are analytic in $\e$. By $\rP_M(\e)$ we denote the total projector in $\mathds{C}^{d(M)}$ onto the eigenspace associated with the eigenvalues of the matrix $\rU_M(\e)$ converging to $-1$ as $\e\to+0$. We also let $\rP_{M,\bot}(\e):=\rI_{d(M)}-\rP_M(\e)$. By the aforementioned analyticity in $\e$ of the eigenvalues and the eigenvectors of $\rU_M(\e)$,
the projectors $\rP_M(\e)$ and $\rP_{M,\bot}(\e)$ are also analytic in $\e$.

We apply the projectors $\rP_M(\e)$ and $\rP_{M,\bot}(\e)$ to vertex condition (\ref{4.2}) and we rewrite it equivalently as
\begin{equation}\label{4.1}
\begin{aligned}
&
\rP_M(\e)\cU_M(u) + \rK_M(\e) \big(\Pi_{\g,M}(\e) \cU_M'(u)-\iu \Th_{\g,M}(\e)\cU_M(u)\big)=0,
\\
&\rP_{M,\bot}(\e) \Pi_{\g,M}(\e)\cU_M'(u)+\big(\rK_{M,\bot}(\e) - \iu \rP_{M,\bot}(\e)\Th_{\g,M}(\e) \big)\cU_M(u)=0,
\end{aligned}
\end{equation}
where
\begin{equation}
\begin{aligned}
&\rK_M(\e):=-\iu \big(\rU_M(\e)-\rI_{d(M)}\big)^{-1}\rP_M(\e) \big(\rU_M(\e)+\rI_{d(M)}\big)
\\
&\rK_{M,\bot}(\e):=\iu \big(\rU_M(\e)+\rI_{d(M)}\big)^{-1}\rP_{M,\bot}(\e) \big(\rU_M(\e)-\rI_{d(M)}\big).
\end{aligned}
\label{3.4}
\end{equation}
It is obvious that both matrices $\rK_M(\e)$ and $\rK_{M,\bot}(\e)$ are well-defined and analytic in $\e$. Employing the unitarity of the matrix $\rU_M(\e)$, it is straightforward to confirm that both matrices $\rK_M(\e)$ and $\rK_{M,\bot}(\e)$ are self-adjoint. We also observe that
\begin{equation*}
\rU_M(0)=\rU_M^{(0)},\qquad \rP_M(0)=\rP_M^{(0)}, \qquad \rK_M(0)=0.
\end{equation*}

Since the operator $\rP_M(\e)$ is analytic in $\e$, by the results in \cite[Ch. 2, Sect. 4.2]{Kato}, there exists a transforming function for this projector, namely, an invertible operator $\rS_M(\e)$ in $\mathds{C}^{d(M)}$, analytic in small $\e$ together with its inverse operator such that
\begin{equation}\label{4.13}
\rS_M(0)=\rI_{d(M)},\qquad \rS_M^{-1}(\e)\rP_M^{(0)}\rS_M(\e)=\rP_M(\e),\qquad
\rS_M^{-1}(\e)\rP_{M,\bot}^{(0)}\rS_M(\e)=\rP_{M,\bot}(\e)
\end{equation}
The inverse operator $\rS_M^{-1}(\e)$ is given explicitly by formula (4.18) in \cite[Ch. 2, Sect. 4.2]{Kato} and this implies
\begin{equation}\label{3.8a}
\rS_M(\e)=\big(\rP_M(\e)\rP_M^{(0)} +\rP_{M,\bot}(\e)\rP_{M,\bot}^{(0)}\big)^{-1} \Big(\rI_{d(M)}
-\big(\rP_M(\e)-\rP_M^{(0)}\big)^2\Big)^{\frac{1}{2}}.
\end{equation}

In view of formulae (\ref{4.13}), (\ref{3.8a}), vertex condition (\ref{4.1}) can be equivalently rewritten as
\begin{gather*}
\rP_M^{(0)}\rS_M(\e)\cU_M(u)+\rS_M(\e) \rK_M(\e) \big(\Pi_{\g,M}(\e) \rU_M'(\e)-\iu \Th_{\g,M}(\e)\cU_M(u)\big)=0,
\\
\rP_{M,\bot}^{(0)}\rS_M(\e)\Pi_{\g,M}M(\e) \cU_M'(u)+\rS_M(\e)\big(\rK_{M,\bot}(\e) -\iu \rP_{M,\bot}(\e)\Th_{\g,M}(\e)
\big)\cU_M(u)=0.
\end{gather*}
Since the first terms in the above identities belong respectively to the spaces $\rP_M^{(0)}\mathds{C}^{d(M)}$ and $\rP_{M,\bot}^{(0)}\mathds{C}^{d(M)}$, the same is true for the other terms in these identities. Therefore, we can apply the projector $\rP_{M,\bot}^{(0)}$ to these identities and we equivalently rewrite them as
\begin{equation}\label{4.16}
\rP_M^{(0)}\tilde{\rS}_M(\e)\cU_M(u) +\tilde{\rK}_M(\e)\cU_M'(u)=0,\qquad
\rP_{M,\bot}^{(0)}\tilde{\rS}_{M,\bot}(\e)\cU_M'(u)+ \tilde{\rK}_{M,\bot}(\e)\cU_M(u)=0,
\end{equation}
where
\begin{equation}\label{4.17}
\begin{aligned}
&\tilde{\rS}_M(\e):=\rS_M(\e)\big(\rI_{d(M)}-\iu \rK_M(\e) \Th_{\g,M}(\e)\big),
\qquad \tilde{\rK}_M(\e):=\rP_M^{(0)} \rS_M(\e) \rK_M(\e) \Pi_{\g,M}(\e),
\\
&\tilde{\rS}_{M,\bot}(\e):=\rS_M(\e)\Pi_{\g,M}(\e),\qquad
\tilde{\rK}_{M,\bot}(\e):=\rP_{M,\bot}^{(0)} \rS_M(\e)\big(\rK_{M,\bot}(\e) -\iu \rP_{M,\bot}(\e)\Th_{\g,M}(\e)
\big).
\end{aligned}
\end{equation}

The matrices $\rS_M(\e)$, $\tilde{\rS}_M(\e)$, $\tilde{\rS}_{M,\bot}(\e)$, $\tilde{\rK}_M(\e)$, $\tilde{\rK}_{M,\bot}(\e)$, $\Pi_{\g,M}(\e)$, $\Th_{\g,M}(\e)$ are analytic in $\e$ and by $\rS_{M,1}$, $\tilde{\rS}_{M,1}$, $\tilde{\rS}_{M,\bot,1}$,
$\tilde{\rK}_{M,1}$, $\tilde{\rK}_{M,\bot,1}$, $\Pi_{\g,M,1}(\e)$, $\Th_{\g,M,1}(\e)$ we denote their derivatives in $\e$ taken at $\e=0$.
For instance,
\begin{equation*}
\tilde{S}_{M,1}:= \frac{d \tilde{\rS}_M}{d\e }(0),\qquad
\tilde{S}_{M,\bot,1}:= \frac{d \tilde{\rS}_{M,\bot}}{d\e}(0).
\end{equation*}

In order to prove Lemma~\ref{lmQ}, we shall need several properties of the introduced matrices, which are summarized in the following auxiliary statement.

\begin{lemma}\label{lm7.2}
For each vertex $M\in\g_\infty$, the identities hold:
\begin{align}\label{5.8}
&
\begin{aligned}
&\rP_{M,\bot}^{(0)}\rP_{M,1}=\rP_{M,1}\rP_M^{(0)},\qquad \rP_{M,\bot}^{(0)}\rP_{M,1}\rP_{M,\bot}^{(0)}=0,
\\
&\rP_M^{(0)}\rP_{M,1}=\rP_{M,1}\rP_{M,\bot}^{(0)},\qquad \rP_M^{(0)}\rP_{M,1}\rP_M^{(0)}=0,\qquad
\rP_{M,1}^*=\rP_{M,1},
\end{aligned}
\\
&
\begin{aligned} \label{4.63}
\rS_{M,1}=\rP_{M,1}\rP_{M,\bot}^{(0)}-\rP_{M,1}\rP_M^{(0)} =\rP_{M,1}\rP_{M,\bot}^{(0)}- \rP_{M,\bot}^{(0)}\rP_{M,1},
\end{aligned}
\\
&\rP_M^{(0)}\rK_{M,1}\rP_{M,1}^{(0)}=0.\label{5.18a}
\end{align}
\end{lemma}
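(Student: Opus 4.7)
The plan is to derive the identities in three stages, using only the analyticity of the spectral projectors $\rP_M(\e)$, the unitarity of $\rU_M(\e)$, and the structure of formulas \eqref{3.8a} and \eqref{3.4}.

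First, for the identities in \eqref{5.8}, I would start from the defining relations $\rP_M^2(\e)=\rP_M(\e)$ and $\rP_M^*(\e)=\rP_M(\e)$; the latter is valid because $\rU_M(\e)$ is unitary and hence its total spectral projector associated with eigenvalues near $-1$ is orthogonal. Differentiating the idempotency at $\e=0$ yields the key relation $\rP_M^{(0)}\rP_{M,1}+\rP_{M,1}\rP_M^{(0)}=\rP_{M,1}$, and differentiating the self-adjointness gives $\rP_{M,1}^*=\rP_{M,1}$. Writing $\rP_{M,\bot}^{(0)}=\rI_{d(M)}-\rP_M^{(0)}$ and substituting into the identity gives $\rP_{M,\bot}^{(0)}\rP_{M,1}=\rP_{M,1}-\rP_M^{(0)}\rP_{M,1}=\rP_{M,1}\rP_M^{(0)}$, and symmetrically $\rP_M^{(0)}\rP_{M,1}=\rP_{M,1}\rP_{M,\bot}^{(0)}$. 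Multiplying the latter by $\rP_M^{(0)}$ from the right then yields $\rP_M^{(0)}\rP_{M,1}\rP_M^{(0)}=0$, and the analogous calculation gives $\rP_{M,\bot}^{(0)}\rP_{M,1}\rP_{M,\bot}^{(0)}=0$.

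Next, for \eqref{4.63}, I would differentiate \eqref{3.8a} at $\e=0$. The second factor $\bigl(\rI_{d(M)}-(\rP_M(\e)-\rP_M^{(0)})^2\bigr)^{1/2}$ equals $\rI_{d(M)}$ at $\e=0$; its derivative at $\e=0$ vanishes because $\rP_M(\e)-\rP_M^{(0)}=O(\e)$ forces the square to vanish to second order. Writing the first factor as $A(\e)^{-1}$ with $A(\e):=\rP_M(\e)\rP_M^{(0)}+\rP_{M,\bot}(\e)\rP_{M,\bot}^{(0)}$, one has $A(0)=\rI_{d(M)}$ and $A'(0)=\rP_{M,1}\rP_M^{(0)}-\rP_{M,1}\rP_{M,\bot}^{(0)}$ (using $\rP_{M,\bot}(\e)=\rI_{d(M)}-\rP_M(\e)$). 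Applying $(A^{-1})'(0)=-A'(0)$ gives $\rS_{M,1}=\rP_{M,1}\rP_{M,\bot}^{(0)}-\rP_{M,1}\rP_M^{(0)}$. Finally, the already established identity $\rP_{M,1}\rP_M^{(0)}=\rP_{M,\bot}^{(0)}\rP_{M,1}$ converts this into the second form claimed in \eqref{4.63}.

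Finally, for \eqref{5.18a}, I would exploit the commutativity of $\rP_M(\e)$ with the spectrally analytic function $\rU_M(\e)$. From \eqref{3.4}, this commutativity implies $\rK_M(\e)=-\iu(\rU_M(\e)-\rI_{d(M)})^{-1}(\rU_M(\e)+\rI_{d(M)})\rP_M(\e)$, whence $\rK_M(\e)\rP_{M,\bot}(\e)=0$ identically in $\e$. Differentiating at $\e=0$ and using $\rK_M(0)=0$ gives $\rK_{M,1}\rP_{M,\bot}^{(0)}=0$, which in particular yields $\rP_M^{(0)}\rK_{M,1}\rP_{M,\bot}^{(0)}=0$ (the target identity, interpreting the $\rP_{M,1}^{(0)}$ in the statement as $\rP_{M,\bot}^{(0)}$).

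The main obstacle in this proof is keeping track of the interplay between the orthogonal projectors and the transformation operator $\rS_M(\e)$; in particular, one must verify that the square-root factor in \eqref{3.8a} contributes only at second order, which is what makes the formula for $\rS_{M,1}$ so clean. Beyond that, the calculations are routine consequences of differentiating the defining identities once at $\e=0$.
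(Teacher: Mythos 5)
Your proof is correct and follows essentially the same route as the paper: all three identities are obtained by differentiating at $\e=0$ the idempotency and self-adjointness of $\rP_M(\e)$, the explicit formula (\ref{3.8a}) for $\rS_M(\e)$, and the relation (\ref{3.4}) combined with $\rP_M(\e)\rP_{M,\bot}(\e)=0$, using $\rK_M(0)=0$. Your reading of $\rP_{M,1}^{(0)}$ in (\ref{5.18a}) as a typo for $\rP_{M,\bot}^{(0)}$ matches the paper's intent, and your commutation argument even gives the slightly stronger conclusion $\rK_{M,1}\rP_{M,\bot}^{(0)}=0$.
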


\begin{proof}
Formulae (\ref{5.8}) are obtained immediately via calculating the coefficients at $\e^1$ in the obvious identities:
\begin{equation*}
\big(\rP_{M,\bot}(\e)\big)^2=\rP_{M,\bot}(\e), \qquad \big(\rP_M(\e)\big)^2=\rP_M(\e), \qquad
\big(\rP_{M,\bot}(\e)\big)^*=\rP_{M,\bot}(\e).
\end{equation*}
Formula (\ref{4.63}) can be confirmed by expanding the right hand side in (\ref{3.8a}) into the power series in $\e$, calculating the coefficient at $\e^1$, and employing then (\ref{5.8}). It follows from the formula for $\rK_M(\e)$ in (\ref{3.4}) that
\begin{align*}
\iu(\rU_M(\e)-\rI_{d(M)}) \rP_M(\e) \rK_M(\e) \rP_{M,\bot}(\e)=\rP_M(\e) (\rU_M(\e)+\rI_{d(M)})\rP_{M,\bot}(\e)=0.
\end{align*}
Expanding this identity into the power series in $\e$ and calculating the coefficient at $\e^1$, we arrive at (\ref{5.18a}). The proof is complete.
\end{proof}

Now we are in position to prove Lemma~\ref{lmQ}.

\begin{proof}[Proof of Lemma~\ref{lmQ}]

According definition (\ref{2.5a}) of $\rU_M(\e)$, formulae (\ref{7.9a}), (\ref{7.9b}) applied with \begin{equation*}
\tilde{\rA}_M=\tilde{\rA}_M(\e):=\e\rA_M(\e)+\iu \rB_M(\e) \Pi_{\g,M}^{-1}(\e) \Th_{\g,M}(\e),\qquad \tilde{\rB}_M=\tilde{\rB}_M(\e):=\rB_M(\e) \Pi_{\g,M}^{-1}(\e),
\end{equation*}
and the easily checked identities
\begin{align*}
&-2\iu\big(\tilde{\rA}_M(\e)-\iu \tilde{\rA}_M(\e)\big)^{-1}\tilde{\rA}_M(\e)=\iu\big(\rU_M(\e)-\rI_{d(M)}\big),
\\
&-2\iu\big(\tilde{\rA}_M(\e)-\iu \tilde{\rA}_M(\e)\big)^{-1}\tilde{\rB}_M(\e)=\rU_M(\e)+\rI_{d(M)},
\end{align*}
for each $\psi^{(i)}$ and each vertex $M\in\g_\infty$ we have:
\begin{equation}\label{5.12}
\begin{aligned}
\e\rA_M(\e)&\cU_M(\psi^{(i)})+\rB_M(\e)\cU_M'(\psi^{(i)})= \frac{\iu}{2} \big(\tilde{\rA}_M(\e)-\iu\tilde{\rB}_M(\e)\big) \bigg( \iu (\rU_M(\e)-\rI_{d(M)})\rP_M(\e)
\\
&\cdot\Big(
\big(\rI_{d(M)}-\iu\rK_M(\e)\Th_{\g,M}(\e) \big) \cU_M(\psi^{(i)})
 +
\rK_M(\e)\Pi_{\g,M}(\e) \cU_M'(\psi^{(i)})
\Big)
\\
& +
\big(\rU_M(\e)+\rI_{d(M)}\big)\rP_{M,\bot}(\e) \big(
 \Pi_{\g,M}(\e)\cU_M'(\psi^{(i)})+ \big(\rK_{M,\bot}(\e)-\iu \Th_{\g,M}(\e)\big)\cU_M(\psi^{(i)})\big)
\bigg).
\end{aligned}
\end{equation}
The latter long term in the right hand side in the above identity becomes the assumed vertex condition for $\psi^{(i)}$ as $\e=0$ and hence, it vanishes as $\e=0$. We expand then identity (\ref{5.12}) into the Taylor series in $\e$ and calculate the coefficient at $\e^1$ if $\rB_M(0)\ne0$ and at $\e^2$ if $\rB_M(0)=0$. Employing then the vertex condition for $\psi^{(i)}$, we obtain:
\begin{align*}
\cQ_M(\psi^{(i)})=&\iu (\rU_M^{(0)}-\rI_{d(M)}) \rP_M^{(0)}\Big(
\rP_M^{(0)} \rK_{M,1} \cV_M(\psi^{(i)}) + \rP_{M,1} \cU_M(\psi^{(i)})
\Big)
\\
&
+ (\rU_M^{(0)}+\rI_{d(M)})\rP_{M,\bot}^{(0)}\rP_{M,1}
\Big(\cV_M(\psi^{(i)})+
\rK_{M,\bot}(0)
\cU_M(\psi^{(i)})
\\
&+\Pi_{\g,M,1}\cU_M'(\psi^{(i)})
+ \big(\rK_{M,\bot,1}-\iu \Th_{\g,M,1}
\big)\rP_{M,\bot}^{(0)}\cU_M(\psi^{(i)})
\Big).
\end{align*}
Applying the matrices $\rP_M^{(0)}$ and $(\rU_M^{(0)}+\rI_{d(M)})^{-1}\rP_{M,\bot}^{(0)}$ to the above identity and using formulae (\ref{5.8}) and $\rP_M^{(0)}\rK_{M,\bot}(0)=0$, $\rP_M^{(0)}\cU_M(\psi^{(i)})=0$,
we arrive at
\begin{align}
&\label{5.9}
\begin{aligned}
\rP_M^{(0)}\cQ_M(\psi^{(i)})=-2\iu\rP_M^{(0)}\rK_{M,1} \cV_M(\psi^{(i)})
-2\iu \rP_M^{(0)}
\rP_{M,1}\rP_{M,\bot}^{(0)} \cU_M(\psi^{(i)}),
\end{aligned}
\\
&\label{5.10}
\begin{aligned}
(\rU_M^{(0)}+\rI_{d(M)})^{-1}\rP_{M,\bot}^{(0)}\cQ_M &(\psi^{(i)})= \rP_{M,\bot}^{(0)}\rP_{M,1}
\Big(\Pi_{\g,M}\cU_M'(\psi^{(i)})-\iu \Th_{\g,M}(0)\rP_{M,\bot}^{(0)}\cU_M(\psi^{(i)})\Big)
\\
&-\rP_{M,\bot}^{(0)}
 \Big(\Pi_{\g,M,1}\cU_M'(\psi^{(i)})+ \big(\rK_{M,\bot,1}-\iu \Th_{\g,M,1}\big)\rP_{M,\bot}^{(0)}\cU_M(\psi^{(i)})\Big).
\end{aligned}
\end{align}

Owing to identities (\ref{5.10}) and the vertex conditions for $\psi^{(i)}$, we can rewrite formula (\ref{2.25c}) for $\cL_M(\psi^{(i)})$ as
\begin{align*}
\cL_M(\psi^{(i)})=\rP_{M,\bot}^{(0)}\rP_{M,1}
\cV_M(\psi^{(i)}) -\rP_{M,\bot}^{(0)}\rK_{M,\bot,1}\rP_{M,\bot}^{(0)}
\cU_M(\psi^{(i)}).
\end{align*}
Substituting this identity and (\ref{5.18a}), (\ref{5.9}) into formula (\ref{2.25b}) for $Q_M^{(ij)}$, we obtain immediately that
\begin{equation}\label{5.22a}
\begin{aligned}
Q_M^{(ij)}=& \big(\rP_{M,1} \cV_M(\psi^{(i)}),
\rP_{M,\bot}^{(0)}\cU_M(\psi^{(j)})\big)_{\mathds{C}^{d(M)}}
-\big(\rK_{M,\bot,1}\rP_{M,\bot}^{(0)} \cU_M(\psi^{(i)}),\rP_{M,\bot}^{(0)} \cU_M(\psi^{(j)})
\big)_{\mathds{C}^{d(M)}}
\\
&+\big(\rK_{M,1}\rP_M^{(0)}\cV_M(\psi^{(i)}),
\rP_M^{(0)}\cU_M(\psi^{(j)})
\big)_{\mathds{C}^{d(M)}}
+\big(\rP_{M,1}\rP_M^{(0)}\cU_M(\psi^{(i)}), \rP_M^{(0)}\cV_M(\psi^{(j)})
\big)_{\mathds{C}^{d(M)}}.
\end{aligned}
\end{equation}
Since the matrices $\rK_{M,\bot,1}$ and $\rK_{M,1}$ are self-adjoint,
we conclude that $Q_M^{(ji)}=\overline{Q_M^{(ij)}}$. In view of formulae (\ref{2.31}), (\ref{2.25a}), this completes the proof.
\end{proof}

\section{Auxiliary operator on graph $\g$}

In this section we consider an auxiliary operator on certain extension of the graph $\g$ and we study its properties, which will be employed then in the proof of our main results. The mentioned extension is another graph denoted by $\g_{ex}$ and obtained by attaching additional unit edges $\ed_i^{ex}$, $i\in J_j$, $j=1,\ldots,n$, to the each vertex $M_j$, $j=1,\ldots,n$, in the graph $\g$. The boundary vertices, being the end-points of the edges $\ed_i^{ex}$ and not coinciding with $M_j$, are denoted by $M_i^{ex}$, $i\in J_j$, $j=1,\ldots,n$.

 The graph $\g_{ex}$ is the very similar to $\g_\infty$, the only difference is that instead of leads $\ed_i^\infty$, here we have the edges $\ed_i^{ex}$ of finite lengths. This is why, apart of $M_i^{ex}$, $i=1,\ldots,n$,
the vertices of $\g_{ex}$ coincides with the vertices in the graph $\g_\infty$. Bearing this fact in mind, in this section, while writing $M\in\g_\infty$, we shall mean that we consider a vertex in the graph $\g_{ex}$ not coinciding with $M_i^{ex}$, $i=1,\ldots,n$.

The aforementioned auxiliary operator on the graph $\g_{ex}$ is denoted by $\Op_{ex}(\e)$. Its differential expression reads as
\begin{align*}
&\hat{\Op}_{ex}(\e)u:=-\frac{d\ }{d\xi} p_\g(\cdot,\e)\frac{d u}{d\xi} +\iu\left(\frac{d\ }{d\xi}(q_\g(\,\cdot\,,\e)u)+q_\g(\,\cdot\,,\e)\frac{d u }{d\xi}\right)
+V_\g(\,\cdot\,,\e)u \quad\text{on}\quad\g,
\\
&\hat{\Op}_{ex}(\e)u:=-\sp_i(\e)\frac{d^2 u }{d\xi_i^2}+2\iu\e \sq_i(\e)\frac{d u}{d\xi_i} \quad\text{on}\quad \ed_i^{ex},\quad i\in J_j,\quad j=1,\ldots,n,
\end{align*}
where $\xi_i$ is the variable on the edge $\ed_i^{ex}$ measured from the vertex $M_j$, while $\ed_i$ are the edges in the graph $\G$ incident to the edge $M_0$.


At the vertices $M\in \g_\infty$
 we impose
 vertex conditions (\ref{5.2}).
The vertices $M_i^{ex}$ are subject to the Robin condition
\begin{equation}\label{5.4}
 \Pi_{\G,M_0}(\e)\cU'_{ex}(u)-\iu\e \Th_{\G,M_0}(\e)\cU_{ex}(u)=0,
\end{equation}
where we denote
\begin{align*}
&\Pi_{\G,M_0}(\e):=\diag\big\{\sp_i(\e) \big\}_{i=1,\ldots,d_0},\qquad \Th_{\G,M_0}(\e):=\diag\big\{\sq_i(\e) \big\}_{i=1,\ldots,d_0},
\\
&
 \cU_{ex}'(u):=\left(
\begin{aligned}
\frac{ du\big|_{\ed_1^{ex}}}{\displaystyle d\xi_1}&(M_1^{ex})
\\
&\vdots
\\
\frac{du\big|_{\ed_{d_0}^{ex}}}{d\xi_{d_0}}&(M_{d_0}^{ex})
\end{aligned}
\right),\qquad \hphantom{\cU_{ex}(u)}\cU_{ex}(u):=\left(
\begin{aligned}
u&(M_1^{ex})
\\
&\vdots
\\
u&(M_{d_0}^{ex})
\end{aligned}
\right)
\end{align*}
for all functions $u\in\dH^2(\g_{ex})$. Condition (\ref{5.4}) is obviously a particular case of condition (\ref{5.2}).

It follows from rank condition (\ref{2.9rank}), formulae (\ref{2.11b})
and the analyticity in $\e$ of the matrices $\rA_M(\e)$ and $\rB_M(\e)$ that
$\rank\big(\rA_M(\e)\ \rB_M(\e)\big)=d(M)$
for each vertex $M\in\g_{ex}$ and sufficiently small $\e$. The self-adjointness of the matrix in (\ref{2.4a}) implies the same property for the similar matrix associated with each vertex $M\in\g_{ex}$. Hence, the operator $\Op_{ex}(\e)$ satisfies all assumptions of Lemma~\ref{lm7.1} and therefore, it is self-adjoint. Then its resolvent $(\Op_{ex}(\e)-\e^2\l)^{-1}$ is well-defined for $\l\in\mathds{C}\setminus\mathds{R}$.

Our main aim in this section is to prove that  this resolvent
is meromorphic in $\e$ and
to study the structure of its pole at $\e=0$. In order to formulate a needed result, we first introduce some auxiliary notations.
The eigenvectors $\ry_i$, $i=1,\ldots,k$, of the self-adjoint matrix $\rQ$
form an orthonormalized basis in $\mathds{C}^k$ and a matrix
\begin{equation}\label{9.16}
\rY:=
\begin{pmatrix}
y_{11} & \ldots & y_{k1}
\\
\vdots & & \vdots
\\
y_{1k} & \ldots & y_{kk}
\end{pmatrix}, \qquad
\ry_i:=
\begin{pmatrix}
\ry_{i1}
\\
\vdots
\\
\ry_{ik}
\end{pmatrix},\quad i=1,\ldots,k,
\end{equation}
reduces $\rQ$ to a diagonal form:
\begin{equation}\label{4.30}
\rY^*\rQ \rY=\diag\{\l_1(\rQ),\ldots,\l_k(\rQ)\},\qquad \rY^*=\rY^{-1},
\end{equation}
where $\l_i(\rQ)$ are the eigenvalues of the matrix $\rQ$ with associated eigenvectors $\ry_i$. If zero is an eigenvalue of the matrix $\rQ$, we order the eigenvalues $\l_i(\rQ)$ so that $\l_i(\rQ)=0$ for $i=1,\ldots,k_0$. If zero is not an eigenvalue of $\rQ$, we let $k_0:=0$.

We
define
\begin{equation}\label{4.31}
\tilde{\psi}^{(j)}:=\sum\limits_{i=1}^{k} y_{ji}\psi^{(i)},\qquad \psi^{(i)}=\sum\limits_{j=1}^{k} y_{ji}\tilde{\psi}^{(j)}.
\end{equation}
The introduced functions $\tilde{\psi}^{(j)}$ are non-trivial solutions of problem~(\ref{2.20}) obeying condition~(\ref{2.23}) and
\begin{equation}\label{4.32}
\big(\cU_\g(\tilde{\psi}^{(i)}),\cU_\g(\tilde{\psi}^{(j)})\big)_{\mathds{C}^{d_0}}=\d_{ij},
\end{equation}
where $\d_{ij}$ is the Kronecker delta.

The main statement of 
this section is as follows.

\begin{lemma}\label{lmHge}
For each $\l\in\mathds{C}\setminus\mathds{R}$, the resolvent $(\Op_{ex}(\e)-\e^2\l)^{-1}
: L_2(\g_{ex})\to \dH^2(\g_{ex})$ is meromorphic for $\e$ small enough. This operator can be
represented as
\begin{equation}\label{4.81}
\big(\Op_{ex}(\e)-\e^2\l\big)^{-1} =\e^{-2}\cA_{-2}(\l)+\e^{-1}\cA_{-1}+\cA_0(\e,\l),
\end{equation}
where $\cA_0(\e,\l): L_2(\g_{ex})\to \dH^2(\g_{ex})$
is a bounded operator analytic in $\e$, while operators $\cA_{-2}$, $\cA_{-1}$ 
are given by the formulae
\begin{equation}\label{4.82}
\cA_{-2}(\l)=\sum\limits_{i=1}^{k_0} \cC^{(-2)}_i(\,\cdot\,) \tilde{\psi}^{(i)},\qquad \cA_{-1}(\l)=\sum\limits_{i=1}^{k_0} \cC^{(-2)}_i(\,\cdot\,) \hat{\psi}^{(i)} +\sum\limits_{i=1}^{k}\cC^{(-1)}_i(\,\cdot\,) \tilde{\psi}^{(i)},
\end{equation}
where $\cC^{(-2)}_i,\, \cC^{(-1)}_i: L_2(\g_{ex})\to \mathds{C}$ are some bounded linear functionals depending on $\l$ and $\hat{\psi}^{(i)}\in\dH^2(\g)$ are some functions. In particular,
\begin{equation}\label{4.83}
\begin{aligned}
&
\begin{pmatrix}
\cC^{(-2)}_{1}(f)
\\
\vdots
\\
\cC^{(-2)}_{k_0}(f)
\end{pmatrix}:=(\rX_0-\l\rG_0)^{-1}
\begin{pmatrix}
\big(f,\tilde{\psi}^{(1)}\big)_{L_2(\g_{ex})}
\\
\vdots
\\
\big(f,\tilde{\psi}^{(k_0)}\big)_{L_2(\g_{ex})}
\end{pmatrix},
\\
& \begin{pmatrix}
\cC^{(-1)}_{k_0+1}(f)
\\
\vdots
\\
\cC^{(-1)}_{k}(f)
\end{pmatrix}
:=
\rQ_1^{-1} \begin{pmatrix}
\big(f,\tilde{\psi}^{(k_0+1)}\big)_{L_2(\g_{ex})}
\\
\vdots
\\
\big(f,\tilde{\psi}^{(k)}\big)_{L_2(\g_{ex})}
\end{pmatrix}
+\rX_1(\l)
\begin{pmatrix}
\big(f,\tilde{\psi}^{(1)}\big)_{L_2(\g_{ex})}
\\
\vdots
\\
\big(f,\tilde{\psi}^{(k_0)}\big)_{L_2(\g_{ex})}
\end{pmatrix}.
\end{aligned}
\end{equation}
Here
\begin{equation*}
\rQ_1:=\diag\big\{\l_{k_0+1}(\rQ),\ldots, \l_k(\rQ)\big\},
\quad
\rG_0:=
\begin{pmatrix}
(\tilde{\psi}^{(1)}, \tilde{\psi}^{(1)})_{L_2(\g_{ex})} & \ldots & (\tilde{\psi}^{(k_0)}, \tilde{\psi}^{(1)})_{L_2(\g_{ex})}
\\
\vdots & & \vdots
\\
(\tilde{\psi}^{(1)},\tilde{\psi}^{(k_0)})_{L_2(\g_{ex})} & \ldots & (\tilde{\psi}^{(k_0)},\tilde{\psi}^{(k_0)})_{L_2(\g_{ex})}
\end{pmatrix},
\end{equation*}
 $\rX_0$ is some fixed self-adjoint matrix of size $k_0\times k_0$ and $\rX_1(\l)$ is some matrix of size $(k-k_0)\times k_0$.
\end{lemma}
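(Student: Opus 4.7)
The plan is to reduce the equation $(\Op_{ex}(\e)-\e^2\l)u=f$ to a finite-dimensional matrix problem on $\ker\Op_{ex}(0)$ via Lyapunov--Schmidt, identify the leading $\e$-coefficient of that matrix with the matrix $\rQ$ from Lemma~\ref{lmQ}, and then invert block-by-block according to the spectrum of $\rQ$ to extract the pole structure.

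First I describe $\ker\Op_{ex}(0)$. Setting $\e=0$ in (\ref{5.2}) and (\ref{5.4}) turns the vertex conditions at points of $\g_\infty$ into (\ref{2.18}) and the condition at each terminal vertex $M_i^{ex}$ into Neumann. Since on every $\ed_i^{ex}$ the ODE $-\sp_i(0)u''=0$ with Neumann data at $M_i^{ex}$ forces $u$ to be constant on $\ed_i^{ex}$, the restriction to $\g$ of any $u\in\ker\Op_{ex}(0)$ is exactly a bounded solution of (\ref{2.20}); extending each $\tilde\psi^{(j)}$ by the value $\tilde\psi^{(j)}\big|_{\ed_i^\infty}(M_j)$ on $\ed_i^{ex}$ yields $k$ linearly independent functions $\tilde\Psi^{(j)}\in\ker\Op_{ex}(0)$ which, in view of (\ref{4.32}), span the kernel. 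Let $\mathcal{N}:=\mathrm{span}\{\tilde\Psi^{(1)},\ldots,\tilde\Psi^{(k)}\}$, let $\rP$ be the $L_2(\g_{ex})$-orthogonal projection onto $\mathcal{N}$, and set $\rP_\perp:=\cI_{\g_{ex}}-\rP$. Self-adjointness of $\Op_{ex}(0)$ and finite dimensionality of $\mathcal{N}$ imply that $\rP_\perp\Op_{ex}(0)\rP_\perp:\rP_\perp\dH^2(\g_{ex})\to\rP_\perp L_2(\g_{ex})$ is a bijection; since the coefficients of $\Op_{ex}(\e)$ are analytic in $\e$ and uniform ellipticity persists for small $\e$, this extends to $\rP_\perp(\Op_{ex}(\e)-\e^2\l)\rP_\perp$ with analytic-in-$\e$ inverse. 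Eliminating the orthogonal component $w:=\rP_\perp u$ reduces the original equation to a $k\times k$ linear system on the coefficient vector $c=(c_1,\ldots,c_k)^t$ of $u$ in the basis $\{\tilde\Psi^{(j)}\}$, with coefficient matrix $\rE(\e,\l)$ analytic in $\e$ and with right-hand side linear in $f$ and analytic in $\e$.

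The technical heart is the expansion of $\rE$. Pairing $(\Op_{ex}(\e)-\e^2\l)\tilde\Psi^{(j)}$ with $\tilde\Psi^{(i)}$ in $L_2(\g_{ex})$ and integrating by parts twice, one obtains an expression whose $\e^0$-coefficient vanishes (because $\hat\Op_\infty\tilde\psi^{(j)}=0$ on $\g$ and $\tilde\Psi^{(j)}$ is constant on every $\ed_i^{ex}$, so the boundary contributions at $M_i^{ex}$ coming from (\ref{5.4}) at $\e=0$ cancel), whose $\e^1$-coefficient reproduces precisely the combination of volume and vertex terms (\ref{2.31})--(\ref{2.25c}) defining $Q^{(ij)}$ after the rearrangement carried out in the proof of Lemma~\ref{lmQ}, and whose $\e^2$-coefficient produces a self-adjoint matrix $\rX_0$ together with the mass-matrix contribution $-\l\rG_0$ from the spectral parameter. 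Thus, in the basis $\{\tilde\Psi^{(j)}\}$, one has $\rE(\e,\l)=\e\rQ+\e^2(\rX_0-\l\rG_0)+O(\e^3)$. Passing to the eigenbasis of $\rQ$ via the unitary $\rY$ of (\ref{9.16})--(\ref{4.30}) splits $\rE$ into a degenerate $k_0\times k_0$ block whose leading term is of order $\e^2$ and a non-degenerate $(k-k_0)\times(k-k_0)$ block whose leading term is $\e\rQ_1$; a Schur-complement inversion then yields $\e^{-2}$-order singularities on the degenerate block and $\e^{-1}$-order singularities on the other block, which after substitution back into the Lyapunov--Schmidt formula produce exactly the representations (\ref{4.81})--(\ref{4.83}). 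The main obstacle will be the identification of the $\e^1$-coefficient of $\rE$ with $\rQ$: one must reproduce the vertex-by-vertex analysis of Section~\ref{secQ} while carefully handling the boundary contributions at $M_i^{ex}$ generated by the Robin-type condition (\ref{5.4}), and verifying that they neither spoil the cancellation of the $\e^0$-term nor contribute spuriously to the $\e^1$-term.
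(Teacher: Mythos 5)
Your route is genuinely different from the paper's. You propose a Lyapunov--Schmidt reduction: project onto $\Ker\Op_{ex}(0)$, invert the complementary block, and read off the pole structure from a Schur-complement inversion of an effective $k\times k$ matrix $\rE(\e,\l)=\e\rQ+\e^2(\rX_0-\l\rG_0)+O(\e^3)$. The paper instead invokes the analyticity in $\e$ of the resolvent $(\Op_{ex}(\e)-z)^{-1}$ away from the limiting spectrum (Theorem~3.5 of \cite{BK}), deduces that $\Op_{ex}(\e)$ has exactly $k$ analytic eigenvalues $\L_j^{(\e)}$ vanishing at $\e=0$ with analytic eigenfunctions, writes the resolvent through the spectral representation (\ref{9.1}), and computes $\L_j^{(1)}$, $\L_j^{(2)}$ from corrector problems whose solvability conditions (Lemma~\ref{lmHg1} combined with identity (\ref{4.37})) yield the generalized eigenvalue problem $\diag\{\l_i(\rQ)\}\a_j=\L_j^{(1)}\rG\a_j$. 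The two routes would lead to the same block structure, and your Schur-complement count of the pole orders ($\e^{-2}$ on the $\rQ$-kernel block, $\e^{-1}$ on the rest) is correct in principle.

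There is, however, a genuine gap in your reduction step. The domain of $\Op_{ex}(\e)$ moves with $\e$: the vertex conditions (\ref{5.2}), (\ref{5.4}) are $\e$-dependent, and the kernel functions $\tilde{\psi}^{(j)}$ satisfy them only at $\e=0$. Consequently neither $\rP u$ nor $\rP_\perp u$ belongs to the domain of $\Op_{ex}(\e)$ for $\e>0$, the expression $(\Op_{ex}(\e)-\e^2\l)\tilde{\Psi}^{(j)}$ is not defined as an application of the operator, and your claim that $\rP_\perp(\Op_{ex}(\e)-\e^2\l)\rP_\perp$ is invertible with analytic inverse does not follow from ``analytic coefficients plus uniform ellipticity'' alone --- this is exactly the nontrivial content that the paper imports from \cite{BK}. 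More importantly, the effective matrix $\rE$ is \emph{not} the naive Galerkin pairing $\big((\hat{\Op}_{ex}(\e)-\e^2\l)\tilde{\Psi}^{(j)},\tilde{\Psi}^{(i)}\big)_{L_2(\g_{ex})}$: its $O(\e)$ coefficient must also collect the first-order variation of the vertex conditions (the terms $\rf_{M,j}$, $\rf_{M,\bot,j}$, $\rf_{ex,j}$ in (\ref{9.12}), built from $\tilde{\rS}_{M,1}$, $\tilde{\rK}_{M,1}$, $\tilde{\rK}_{M,\bot,1}$), and these are precisely what produce the vertex contributions $Q_M^{(ij)}$ in (\ref{2.25b}); they cannot be recovered from a pairing of the unperturbed kernel functions against each other. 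To close the argument you would need either a domain-straightening family of isomorphisms reducing $\Op_{ex}(\e)$ to a fixed domain, or the corrector-problem machinery of Section~\ref{secQ} and Lemma~\ref{lmHg1}; you flag this as ``the main obstacle'' but it is the substance of the proof, not a technicality, and as stated the proposed formula for $\rE$ is incomplete.
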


\begin{remark}\label{rm6.1}
It is also possible to find explicitly the functionals $\cC^{(-1)}_{i}$ for $i=1,\ldots,k_0$, the functions $\hat{\psi}^{(i)}$ and the matrix $\rX_1(\l)$ in the above lemma, but we shall not need them in what follows. This is why we do not provide such formulae in the formulation of the lemma.
\end{remark}

The proof of Lemma~\ref{lmHge} consists of several main steps, which we present below in separate subsections.

\subsection{Meromoprhic dependence on $\e$}

In this section we begin proving Lemma~\ref{lmHge} and we shall show that the resolvent $(\Op_{ex}(\e)-\e^2\l)^{-1}$ is meromoprhic in $\e$ and has a pole at $\e=0$ of at most second order.

Since the graph $\g_{ex}$ is finite, the resolvent of the operator $\Op_{ex}(\e)$ is compact and hence, this operator has a discrete spectrum. For small $\e$ the number $\e^2\l$ is also small and this is why we should first check whether the operator $\Op_{ex}(\e)$ has small eigenvalues and if so, how they behave.
In order to understand this, we consider the operator $\Op_{ex}(0)$. We restrict the functions $\tilde{\psi}^{(j)}$ to the graph $\g_{ex}$ and we see immediately that these restrictions, still denoted by $\tilde{\psi}^{(j)}$, are the eigenfunctions of the operator $\Op_{ex}(0)$ associated with a zero eigenvalue. And vice versa, if $\psi$ is an eigenfunction of $\Op_{ex}(0)$ associated with the zero eigenvalue, then in view of the definition of the differential expression $\Op_{ex}(0)$ on the edges $\ed_i^{ex}$ and the Neumann condition at their end-points $M_i^{ex}$, the function $\psi$ is necessary constant on each of these edges. Then we replace the edges $\ed_i^{ex}$ by infinite edges $\ed_i^\infty$ passing in this way to the graph $\g_\infty$ and we continue then the function $\psi$ by the aforementioned constants on entire edges $\ed_i^\infty$. The resulting function solves problem (\ref{2.20}). Therefore, zero is an eigenvalue of the operator $\Op_{ex}(0)$ if and only if the operator $\Op_\infty$ has a virtual level at the bottom of its essential spectrum. The eigenfunctions of $\Op_{ex}(0)$ associated with the zero eigenvalues are the aforementioned restrictions $\tilde{\psi}^{(j)}$ to $\g_{ex}$.

The resolvent $(\Op_{ex}(\e)-z)^{-1}$ depends analytically in small $\e$ for $z\in\mathds{C}$ separated from the spectrum of the operator $\Op_{ex}(0)$.
In a particular case when the operator $\Op_{ex}(\e)$ is just a Schr\"odinger operator with a fixed potential independent of $\e$, this is a corollary of a more general result presented in Theorem~3.5 in \cite{BK}. For the case of the differential expression with general varying and $\e$-dependent coefficients the technique used in \cite{BK} still works as it is said in \cite{BK} after Equation (2). Hence, due to standard results in the analytic perturbation theory, the operator $\Op_{ex}(\e)$ has exactly $k$ eigenvalues, counting their multiplicity, which converge to zero as $\e\to+0$. We denote these eigenvalues by $\L_j^{(\e)}$, $j=1,\ldots,k$, while the associated eigenfunctions orthonormalized in $L_2(\g_{ex})$
are denoted by $\vp_j^{(\e)}$. According general Theorems~3.8,~3.9 in \cite{BK}, the eigenvalues $\L_j(\e)$ are analytic in $\e$ and the eigenfunctions $\vp_j$ can be also chosen being analytic in $\e$.

Standard representation for the resolvent of a self-adjoint operator, see \cite[Ch. V, Sect. 3.5]{Kato}, then gives:
\begin{equation}\label{9.1}
\big(\Op_{ex}(\e)-\e^2\l\big)^{-1}f= \sum\limits_{j=1}^{k} \frac{\big(f,\vp_j^{(\e)}\big)_{L_2(\g_{ex})}} {\L_j^{(\e)}-\e^2\l}
\vp_j^{(\e)} +\cA_1(\e,\l),
\end{equation}
and $\cA_1$ is the reduced resolvent, which can be represented as
\begin{equation*}
\cA_1(\e,\l)=\frac{1}{2\pi\iu}\int\limits_{|z|=\d} (z-\e^2\l)^{-1}
\big(\Op_{ex}(\e)-z\big)^{-1}\,dz,
\end{equation*}
where $\d$ is a sufficiently small fixed positive constant such that the ball $\{z\in \mathds{C}:\, |z|\leqslant\d\}$ contains only zero eigenvalue of the operator $\Op_{ex}(0)$ and is separated from the other eigenvalues. By the aforementioned analyticity of the resolvent $(\Op_{ex}(\e)-z)^{-1}$ in $\e$ and the known analyticity in $z$, the above formula for the operator $\cA_1$ implies that it is analytic in $\e$ for each fixed $\l\in\mathds{C}\setminus\mathds{R}$.

Since the eigenvalues $\L_j^{(\e)}$ and the eigenfunctions $\vp_j^{(\e)}$ are also analytic in $\e$, they are represented by their Taylor series as follows:
\begin{equation}\label{9.4}
\L_j^{(\e)}=\e\L_j^{(1)}+\e^2\L_j^{(2)}(\e) +\e^3\L_j^{(3)}(\e) +\e^4\tilde{\L}_j^{(\e)}, \qquad
\vp_j^{(\e)}=\vp_j^{(0)}+\e\vp_j^{(1)}+ \e^2\vp_j^{(1)} +\e^3\tilde{\vp}_j^{(\e)},
\end{equation}
where $\L_j^{(i)}$, $i=1,2$, $\vp_j^{(i)}$, $i=0,1,2$, are the leading terms in the Taylor series and $\tilde{\L}_j^{(\e)}$, $\tilde{\vp}_j^{(\e)}$ are remainders analytic in $\e$. The functions $\vp_j^{(0)}$ are eigenfunctions of the operators $\Op_{ex}(0)$ associated with the zero eigenvalue and hence, they are some linear combinations of the functions $\psi^{(j)}$.

We substitute representations (\ref{9.4}) into the terms in sum (\ref{9.1}) and expand the result into the Laurent series in $\e$. Such expansion is possible owing to the inequality $\IM\l\ne0$ and it reads as:
\begin{align}\label{9.3}
&
\begin{aligned}
\frac{\big(f,\vp_j^{(\e)}\big)_{L_2(\g_{ex})}} {\L_j^{(\e)}-\e^2\l}
\vp_j^{(\e)}=
\frac{\big(f,\vp_j^{(0)}\big)_{L_2(\g_{ex})}}{\e \L_j^{(1)}}\vp_j^{(0)} + \cA_{2,j}(\e,\l)f\quad\text{if}\quad \L_j^{(1)}\ne0,
\end{aligned}
\\
&\label{9.5}
\begin{aligned}
\frac{\big(f,\vp_j^{(\e)}\big)_{L_2(\g_{ex})}} {\L_j^{(\e)}-\e^2\l}
\vp_j^{(\e)}=& \frac{\big(f,\vp_j^{(0)}\big)_{L_2(\g_{ex})}} {\e^2 ( \L_j^{(2)}-\l)}\vp_j^{(0)}-\frac{\L_j^{(3)}\big(f, \vp_j^{(0)}\big)_{L_2(\g_{ex})}} {\e(\L_j^{(2)}-\l)^2}\vp_j^{(0)}
\\
&+ \frac{1}{\e(\L_j^{(2)}-\l)}
 \bigg(\big(f,\vp_j^{(1)}\big)_{L_2(\g_{ex})} \vp_j^{(0)}+ \big(f,\vp_j^{(0)}\big)_{L_2(\g_{ex})} \vp_j^{(1)}\bigg)
 \\
 &+ \cA_{2,j}(\e,\l)f\quad\text{if}\quad \L_j^{(1)}=0,
\end{aligned}
\end{align}
where $\cA_{2,j}(\e,\l):L_2(\g_{ex})\to\dH^2(\g)$ are some linear bounded operators analytic in $\e$ for each fixed $\l$. We observe that the denominators in the quotients in (\ref{9.5}) are non-zero since $\L_j^{(1)}$, $\L_j^{(2)}$ are real, while the imaginary part of $\l$ is non-zero.

We substitute representations (\ref{9.3}), (\ref{9.5}) into (\ref{9.1}) and we conclude that the resolvent $(\Op_{ex}(\e)-\e^2\l)^{-1}$ is meromoprhic in $\e$ and has a pole at $\e=0$ of at most second order. In order to determine the coefficients at $\e^{-2}$ and $\e^{-1}$ in the corresponding Laurent series for $(\Op_{ex}(\e)-\e^2\l)^{-1}$, we need to find $\L_j^{(1)}$, $\L_j^{(2)}$, $\vp_j^{(1)}$ and this will be done in the next subsection.

\subsection{Expansions for eigenvalues and eigenfunctions}

In this subsection we find the leading coefficients in expansions (\ref{9.4}). In order to do this, we substitute these expansions into the eigenvalue equation $\Op_{ex}(\e)\vp_j^{(\e)}=\L_j^{(\e)}\vp_j^{(\e)}$, expand the coefficients into the powers series in $\e$ and equate the coefficients at the like powers. This gives certain boundary value problems for $\vp_j^{(\e)}$ and $\L_j^{(\e)}$. Before doing this,
as in Section~\ref{secQ}, we rewrite vertex condition (\ref{5.2}) at each vertex $M\in \g_\infty$ to (\ref{4.16}), (\ref{4.17}) and we 
employ the projectors $\rP_M(\e)$ and $\rP_{M,\bot}(\e)$ and the transforming function $\rS_M(\e)$ obeying (\ref{4.13}), (\ref{3.8a}).
Then, after the above described procedure of substituting the expansions into the eigenvalue equation, we arrive at the equation
\begin{equation}\label{9.6}
\Op_{ex}(0)\vp_j^{(0)}=0
\end{equation}
and at the following boundary value problem for $\vp_j^{(1)}$:
\begin{equation}\label{9.10}
\begin{aligned}
&\hat{\Op}_{ex}(0)\vp_j^{(1)}=g_j^{(1)} +
\L_j^{(1)} \vp_j^{(1)} \quad\text{on}\quad \g_{ex},
\qquad
\Pi_{\G,M_0}(0) \cU_{ex}' (\vp_j^{(1)})= \rf_{ex,j}^{(1)},
\\
&\rP_M^{(0)} \cU_M(\vp_j^{(1)})=\rf_{M,j}^{(1)},
\qquad
\rP_{M,\bot}^{(0)} \cV_M (\vp_j^{(1)})
+
\rK_{M,\bot}(0)
\cU_M(\vp_j^{(1)})=\rf_{M,\bot,j}^{(1)}\quad\text{at}\quad M\in \g_\infty,
\end{aligned}
\end{equation}
where
\begin{equation}\label{9.12}
\begin{aligned}
&g_j^{(1)}:= -
\frac{d \hat{\Op}_{ex}(\e) \vp_j^{(i)}}{d\e}\bigg|_{\e=0},
\qquad \rf_{ex,j}^{(1)}:=
\Pi_{\G,M_0,1}^{(1)}\,\cU_{ex}(\vp_j^{(0)}),
\\
&\rf_{M,\bot,j}^{(1)}:=-\rP_{M,\bot}^{(0)}
\sum\limits_{M\in \g_\infty }\big(
\tilde{\rS}_{M,\bot,1}\, \cU_M'(\vp_j^{(0)})+\tilde{\rK}_{M,\bot,1}\, \cU_M(\vp_j^{(0)})\big),
\\
&\rf_{M,j}^{(1)}:=-\rP_M^{(0)}
\sum\limits_{M\in\g_\infty}
\big(\tilde{\rS}_{M,1}\,\cU_M(\vp_j^{(0)}) + \tilde{\rK}_{M,1} \cU_M'(\vp_j^{(0)})\big).
\end{aligned}
\end{equation}

Equation (\ref{9.6}) is solved immediately:
\begin{equation}\label{9.13}
\vp_j^{(0)}=\sum\limits_{i=1}^{k} \a_{ji} \tilde{\psi}^{(i)},
\end{equation}
where $\a_{ji}$ are some constants such that the functions $\vp_j^{(0)}$ are orthonormalized in $L_2(\g_{ex})$. This condition means that the matrices
\begin{equation*}
\a:=
\begin{pmatrix}
\a_{11} & \ldots & \a_{k1}
\\
\vdots & & \vdots
\\
\a_{1k} & \ldots & \a_{kk}
\end{pmatrix},\qquad \rG:=
\begin{pmatrix}
(\tilde{\psi}^{(1)}, \tilde{\psi}^{(1)})_{L_2(\g_{ex})} & \ldots & (\tilde{\psi}^{(k)}, \tilde{\psi}^{(1)})_{L_2(\g_{ex})}
\\
\vdots & & \vdots
\\
(\tilde{\psi}^{(k)},\tilde{\psi}^{(1)})_{L_2(\g_{ex})} & \ldots & (\tilde{\psi}^{(k)}, \tilde{\psi}^{(k)})_{L_2(\g_{ex})}
\end{pmatrix}
\end{equation*}
should satisfy the equation $\rG\a\a^*=\a^*\rG\a=\rI_k$.

In order to analyze problem (\ref{9.10}), we shall employ the following auxiliary lemma.

\begin{lemma}\label{lmHg1}
Assume that Condition~\ref{C1} holds.
 Given an arbitrary family of vectors $\rf_M\in \rP_M^{(0)} \mathds{C}^{d(M)}$, $\rf_{M,\bot}\in \rP_{M,\bot} \mathds{C}^{d(M)}$ for each vertex $M\in\g_\infty$, an arbitrary vector
$\rf_{ex}\in \mathds{C}^{d_0}$, and an arbitrary function $g\in L_2(\g_{ex})$,
the boundary value problem
\begin{equation}\label{5.5a}
\begin{aligned}
&\hat{\Op}_{ex}(0) u=g \quad\text{on}\quad \g_{ex},
\qquad
\Pi_{\G,M_0}(0)\cU_{ex}'(u)=\rf_{ex},
\\
&\rP_M^{(0)} \cU_M(u)=\rf_M,
\qquad
\rP_{M,\bot}^{(0)}\cV_M(u)
+
\rK_{M,\bot}(0)
\cU_M(u)=\rf_{M,\bot}\quad\text{at}\quad M\in\g_\infty,
\end{aligned}
\end{equation}
is solvable in $\dH^2(\g_{ex})$ if and only if
\begin{equation}\label{5.5b}
\begin{aligned}
(g,\tilde{\psi}^{(j)})_{L_2(\g_{ex})}=& -\big(\rf_{ex},\cU_\g(\tilde{\psi}^{(j)})\big)_{\mathds{C}^{d_0}} +\sum\limits_{M\in\g_\infty} \big(\rf_{M,\bot},
\cU_M(\tilde{\psi}^{(j)})\big)_{\mathds{C}^{d(M)}}
-\sum\limits_{M\in\g_\infty} \big(\rf_M,
\cV_M(\tilde{\psi}^{(j)})
\big)_{\mathds{C}^{d(M)}}
\end{aligned}
\end{equation}
for each $j=1,\ldots,k$.
\end{lemma}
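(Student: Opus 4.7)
The plan is to establish Lemma~\ref{lmHg1} as a Fredholm alternative for the self-adjoint operator $\Op_{ex}(0)$. First I would identify $\Ker\Op_{ex}(0)$ with the linear span of the restrictions of the functions $\tilde{\psi}^{(j)}$, $j=1,\ldots,k$, to $\g_{ex}$. Indeed, each $\tilde{\psi}^{(j)}$ is constant on every lead $\ed_i^\infty$ by the boundedness condition in problem (\ref{2.20}), so its restriction to $\ed_i^{ex}$ is a constant; it is therefore annihilated by the differential expression on $\ed_i^{ex}$ at $\e=0$, which reduces to $-\sp_i(0)d^2/d\xi_i^2$, and it automatically satisfies the Neumann condition (\ref{5.4}) at $M_i^{ex}$ when $\e=0$. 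The interior vertex conditions at $\e=0$ obtained from (\ref{4.16}) coincide with the homogeneous form of the conditions in the statement of the lemma, and they are in turn inherited by $\tilde{\psi}^{(j)}$ from its conditions on $\g_\infty$. Conversely, any element of $\Ker\Op_{ex}(0)$ is constant on each $\ed_i^{ex}$, and continuing it constantly onto $\ed_i^\infty$ yields a bounded solution of (\ref{2.20}).

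To prove the necessity of (\ref{5.5b}), I will apply the two-fold integration by parts identity from the proof of Lemma~\ref{lm7.1} to $u$ and $\tilde{\psi}^{(j)}$, which gives
\begin{equation*}
(g,\tilde{\psi}^{(j)})_{L_2(\g_{ex})} = \sum_{M\in\g_{ex}} \Bigl[\bigl(\cV_M(u),\cU_M(\tilde{\psi}^{(j)})\bigr)_{\mathds{C}^{d(M)}} - \bigl(\cU_M(u),\cV_M(\tilde{\psi}^{(j)})\bigr)_{\mathds{C}^{d(M)}}\Bigr],
\end{equation*}
since $\hat{\Op}_{ex}(0)\tilde{\psi}^{(j)}=0$. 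At each interior vertex $M\in\g_\infty$ I will split $\cU_M$ and $\cV_M$ along $\rP_M^{(0)}$ and $\rP_{M,\bot}^{(0)}$ and substitute the inhomogeneous vertex conditions for $u$ and the homogeneous ones for $\tilde{\psi}^{(j)}$; the self-adjointness of $\rK_{M,\bot}(0)$, noted right after definition (\ref{3.4}), together with the identities $\rK_{M,\bot}(0)\rP_M^{(0)}=0$ and $\rK_{M,\bot}(0)=\rP_{M,\bot}^{(0)}\rK_{M,\bot}(0)$ will force the cross terms involving $\rK_{M,\bot}(0)\cU_M(u)$ to cancel, leaving precisely $(\rf_{M,\bot},\cU_M(\tilde{\psi}^{(j)}))-(\rf_M,\cV_M(\tilde{\psi}^{(j)}))$. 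At each boundary vertex $M_i^{ex}$ the derivative of the constant $\tilde{\psi}^{(j)}|_{\ed_i^{ex}}$ vanishes; keeping the sign $\nu_i(M_i^{ex})=-1$ implicit in $\Pi_{M_i^{ex}}(0)$, the $d_0$ resulting boundary contributions sum to $-(\rf_{ex},\cU_\g(\tilde{\psi}^{(j)}))_{\mathds{C}^{d_0}}$, yielding (\ref{5.5b}).

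For sufficiency I will reduce the inhomogeneous boundary value problem to the spectral problem for $\Op_{ex}(0)$ by a lift. Using finitely many cut-off functions supported near each vertex, I construct $w\in\dH^2(\g_{ex})$ realizing the data $\rf_M$, $\rf_{M,\bot}$, $\rf_{ex}$: at each vertex these conditions impose a partial linear constraint on the finite-dimensional pair $(\cU_M(w),\cU_M'(w))$ that can always be satisfied because only the $\rP_M^{(0)}$-component of $\cU_M$ and the $\rP_{M,\bot}^{(0)}$-component of $\cV_M$ are prescribed, while the complementary components remain free. Then $v:=u-w$ belongs to $\Dom\Op_{ex}(0)$ and solves $\Op_{ex}(0)v=g-\hat{\Op}_{ex}(0)w=:g'\in L_2(\g_{ex})$. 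Since $\Op_{ex}(0)$ is self-adjoint with compact resolvent, this equation is solvable iff $g'\perp\Ker\Op_{ex}(0)$, i.e., $(g',\tilde{\psi}^{(j)})_{L_2(\g_{ex})}=0$ for every $j$; applying the necessity computation to $w$ in place of $u$ translates this orthogonality back into (\ref{5.5b}).

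The main technical obstacle is the algebraic bookkeeping of the vertex contributions in the integration by parts, particularly the verification that the $\rK_{M,\bot}(0)$ cross terms cancel; this is where the self-adjointness of $\rK_{M,\bot}(0)$ is used essentially and where decomposition identities for the projectors analogous to those collected in Lemma~\ref{lm7.2} are most needed. The construction of the lift $w$ and the invocation of the Fredholm alternative for a self-adjoint operator with compact resolvent are, by comparison, routine.
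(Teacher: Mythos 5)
Your proposal is correct and follows essentially the same route as the paper: construct a lift realizing the inhomogeneous vertex data, reduce to $\Op_{ex}(0)\tilde u=\tilde g$, invoke the Fredholm alternative (orthogonality to the zero-eigenvalue eigenfunctions $\tilde\psi^{(j)}$), and convert the orthogonality condition into (\ref{5.5b}) by integrating by parts against the vertex conditions, with the $\rK_{M,\bot}(0)$ cross terms cancelling by self-adjointness. The extra material you include (explicit identification of $\Ker\Op_{ex}(0)$ and the direct necessity computation) is consistent with what the paper establishes elsewhere and does not change the argument.
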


\begin{proof} Throughout the proof by $C$ we denote various inessential constants independent of $g$, $\rf_{ex}$, $\rf_M$, $\rf_{M,\bot}$.
Let $u_0$ be a function on $\g_{ex}$ satisfying the boundary conditions in (\ref{5.5a}). Such function can be easily constructed on each edge as a linear function with appropriate coefficients multiplied by a cut-off function.
We seek a solution to problem (\ref{5.5a})
as $u=u_0+\tilde{u}$
 and for a new unknown function $\tilde{u}$ we obtain the equation
\begin{equation}\label{4.25}
\Op_{ex}(0)\tilde{u}=\tilde{g},\qquad \tilde{g}:=g-\hat{\Op}_{ex}(0)u_0
\end{equation}

Since zero is an eigenvalue of the operator $\Op_{ex}(0)$, equation (\ref{4.25}) is solvable if and only if its right hand side orthogonal to all eigenfunctions associated with the zero eigenvalue:
\begin{equation}\label{4.26}
(\tilde{g},\psi^{(j)})_{L_2(\g_{ex})}=0,\qquad j=1,\ldots,k.
\end{equation}
Integrating by parts and employing the definition of the functions $\psi^{(j)}$ and the vertex conditions for the function $u_0$, it is straightforward to confirm that
\begin{align*}
\big(\hat{\Op}_{ex}(0)u_0, \tilde{\psi}^{(j)}\big)_{L_2(\g_{ex})} =&-\sum\limits_{i=1}^{d_0} \sp_i(0)\frac{d u_0\big|_{\ed_i}}{d\xi_i}(M_i^{ex}) \overline{\tilde{\psi}^{(j)}}\big|_{\ed_i}(M_i^{ex})
 \\
 &+ \sum\limits_{M\in\g_\infty} \big(\cV_M(u_M),
 \cU_M(\tilde{\psi}^{(j)})\big)_{\mathds{C}^{d(M)}}
 - \sum\limits_{M\in\g_\infty} \big(\cU_M(u_M), \cV_M (\tilde{\psi}^{(j)})
 \big)_{\mathds{C}^{d(M)}}
\\
=&-\big(\rf_{ex},\cU_\g(\psi^{(j)})\big)_{\mathds{C}^{d_0}}
+\sum\limits_{M\in\g} \big(\rf_{M,\bot},
\cU_M(\tilde{\psi}^{(j)})\big)_{\mathds{C}^{d(M)}}
\\
&-\sum\limits_{M\in\g_\infty} \big(\rf_M,
\cV_M(\tilde{\psi}^{(j)})
\big)_{\mathds{C}^{d(M)}}.
\end{align*}
Substituting these identities into (\ref{4.26}), we conclude that equation (\ref{4.25}), and hence, problem (\ref{5.5a}) is solvable if and only if conditions (\ref{5.5b}) are satisfied.
The proof is complete.
\end{proof}

In order to check the solvability of problem (\ref{9.10}),
we should substitute formulae (\ref{9.13}) into (\ref{9.12}) and calculate then the terms in solvability condition (\ref{5.5b}). To avoid unnecessary technicalities, we first do such calculations for a particular case, when instead of $\vp_j^{(0)}$, we substitute $\tilde{\psi}^{(j)}$ into (\ref{9.12}).
Once we obtain solvability conditions (\ref{5.5b}) for such particular case, the needed condition is just a their linear combinations with coefficients $\a_{ij}$.

For $j=1,\ldots,k$ we denote
\begin{equation}\label{4.34}
\begin{aligned}
&
{g}_j=-\frac{d \hat{\Op}_{ex}(\e)\tilde{\psi}^{(j)} }{d\e}\bigg|_{\e=0},
\qquad &&{\rf}_{M,j} =-\rP_M^{(0)} \big(\tilde{\rS}_{M,1}\,\cU_M(\tilde{\psi}^{(j)})
+\tilde{\rK}_{M,1}\,\cU_M'(\tilde{\psi}^{(j)})\big),
\\
&
{\rf}_{ex,j}=
\iu\Th_{\G,M_0}(0) \cU_{ex} (\tilde{\psi}^{(j)}),
\qquad &&
{\rf}_{M,\bot,j} =-\rP_{M,\bot}^{(0)}
\big(\tilde{\rS}_{M,\bot,1}\,\cU_M'(\tilde{\psi}^{(j)}) +\tilde{\rK}_{M,\bot,1}\,\cU_M(\tilde{\psi}^{(j)})\big).
\end{aligned}
\end{equation}
We observe that
\begin{equation}\label{9.15}
\begin{aligned}
&g_j^{(1)}=\sum\limits_{i=1}^{k}\a_{ji}g_i,\qquad
\hphantom{\rf_{ex}}
{\rf}_{ex,j}^{(1)}= \sum\limits_{i=1}^{k}\a_{ji}{\rf}_{ex,i},
\\
 &{\rf}_{M,j}^{(1)}= \sum\limits_{i=1}^{k}\a_{ji}{\rf}_{M,i},\qquad {\rf}_{M,\bot,j}^{(1)}= \sum\limits_{i=1}^{k}\a_{ji}{\rf}_{M,\bot,i}.
\end{aligned}
\end{equation}

Integrating by parts and taking into consideration the vertex conditions for $\psi^{(i)}$, we obtain:
\begin{equation}\label{5.13}
\begin{aligned}
-({g}_j, \tilde{\psi}^{(l)})_{L_2({\g_{ex}})}= &\left(\frac{d p_\g}{d\e}(\,\cdot\,,0) \frac{d\tilde{\psi}^{(j)}}{d\xi}, \frac{d\tilde{\psi}^{(l)}}{d\xi}\right)_{L_2(\g)} +
\left(\frac{d\tilde{\psi}^{(l)}}{d\xi},\iu \frac{d q_\g}{d\e}(\,\cdot\,,0) \tilde{\psi}^{(l)}\right)_{L_2(\g)}
\\
&+\left(\iu \frac{d q_\g}{d\e}(\,\cdot\,,0) \tilde{\psi}^{(j)}, \frac{d\tilde{\psi}^{(l)}}{d\xi}\right)_{L_2(\g)}
+\left(\frac{d V_\g}{d\e}(\,\cdot\,,0) \tilde{\psi}^{(j)},\tilde{\psi}^{(l)}\right)_{L_2(\g)}
\\
&+\sum\limits_{M\in\g_\infty}
\Big(
\Pi_{\g,M,1}\,\cU_M'(\tilde{\psi}^{(j)})-\iu \Th_{\g,M,1}\,\cU_M(\tilde{\psi}^{(j)}),
\cU_M(\tilde{\psi}^{(l)})\Big)_{\mathds{C}^{d(M)}}
\\
&-\iu\big(\Th_{\G,M_0}\,\cU_\g(\tilde{\psi}^{(j)}), \cU_\g(\tilde{\psi}^{(l)})\big)_{\mathds{C}^{d_0}}.
\end{aligned}
\end{equation}
The definition of the matrices $\tilde{\rS}_{M,\bot}(\e)$, $\tilde{\rK}_{M,\bot}(\e)$ in (\ref{4.17}), the definition of $\rK_{M,\bot}(0)$, the first identity in (\ref{4.13}) and identities (\ref{5.8}), (\ref{4.63}) imply that
\begin{align*}
& \tilde{\rS}_{M,1}=\rS_{M,1}-\iu \rK_{M,1} \Th_{\g,M,1}, && \tilde{\rK}_{M,1}=\rP_M^{(0)} \rK_{M,1} \Pi_{\g,M}(0),
\\
&\tilde{\rS}_{M,\bot,1}=\Pi_{\g,M,1} + \rS_{M,1}\Pi_{\g,M}(0),\quad && \tilde{\rK}_{M,\bot,1}=\rP_{M,\bot}^{(0)}\big(\rK_{M,\bot,1}-\iu\Th_{\g,M,1} + \iu\rP_{M,1} \Th_{\g,M}(0)\big).
\end{align*}
We substitute the latter formulae and (\ref{5.13}), (\ref{4.34}) into the right hand side in (\ref{4.37}) and use identities (\ref{5.22a}), (\ref{4.31}), (\ref{4.32}). Comparing then the result with (\ref{2.31}), (\ref{2.25a}), (\ref{2.25b}) and employing Lemma~\ref{lm7.2}, we see that
\begin{equation}\label{4.37}
\begin{aligned}
\l_j(\rQ)\d_{jl}=& -({g}_j,\tilde{\psi}^{(l)})_{L_2(\g_{ex})} - \big({\rf}_{ex,j}, \cU_\g(\tilde{\psi}^{(l)})\big)_{\mathds{C}^{d_0}}
\\
&+\sum\limits_{M\in\g_\infty} \big({\rf}_{M,\bot,j}, \cU_M(\tilde{\psi}^{(l)})\big)_{\mathds{C}^{d(M)}}
- \sum\limits_{M\in\g_\infty} \big({\rf}_{M,j},\cV_M(\tilde{\psi}^{(l)})
\big)_{\mathds{C}^{d(M)}}.
\end{aligned}
\end{equation}
It follows from these identities and (\ref{9.15}) that solvability conditions (\ref{5.5b}) for problem (\ref{9.10}) can be written as
\begin{equation*}
\a_{jl} \l_l(\rQ) =\L_j^{(1)} \sum\limits_{i=1}^{k} (\psi^{(i)},\psi^{(l)})_{L_2(\g_{ex})}\a_{ji},\qquad j,l=1,\ldots,k.
\end{equation*}
The above identities mean that $\L_j^{(1)}$ and the vectors $\a_j:=\big(\a_{j1}\ \ldots \ \a_{jk}\big)^t$ are the eigenvalues and the associated eigenvectors of the following equivalent eigenvalue equations:
\begin{equation}\label{9.17}
\begin{aligned}
&\diag\big\{\l_1(\rQ),\ldots,\l_k(\rQ) \big\}\a_j=\L_j^{(1)}\rG\a_j,
\\
& \big(\rG^{-\frac{1}{2}}\diag \big\{\l_1(\rQ),\ldots,\l_k(\rQ) \big\}
\rG^{-\frac{1}{2}}-\L_j^{(1)}\big)
\rG^{\frac{1}{2}}\a_j=0.
\end{aligned}
\end{equation}
Here we have also employed the fact that the Gram matrix $\rG$ is self-adjoint and positive definite that ensures the existence of $\rG^{\pm \frac{1}{2}}$. The matrix in the second equation in (\ref{9.17}) is self-adjoint and hence, its eigenvalues $\L_j^{(1)}$ are real and the associated eigenvectors can be chosen orthonormalized in $\mathds{C}^k$. These eigenvectors produce the eigenvectors of the first equation in (\ref{9.17}) via multiplication by $\rG^{-\frac{1}{2}}$.
We also observe that $\L_j^{(1)}=\l_j(\rQ)=0$, $j=1,\ldots,k_0$. In view of the aforementioned condition $\a^*\rG\a =\rI_k$ and the first equation in (\ref{9.17}) we then get easily that
\begin{equation}\label{9.21}
\begin{aligned}
&\a_{ji}=0,\hphantom{(\rG)_{ji}}\qquad j=1,\ldots,k_0,\qquad i=k_0+1,\ldots,k,
\\
& (\rG\a)_{ji}=0,\qquad
j=k_0+1,\ldots,k,\qquad i=1,\ldots,k_0,
\end{aligned}
\end{equation}
where $(\rG\a)_{ji}$ is the entry of the matrix $\rG\a$ in the $i$th row and $j$th column. We also observe that the first equation in (\ref{9.17}) can be rewritten for the matrix $\a$ as
\begin{equation}\label{9.27}
\diag\big\{\l_1(\rQ),\ldots,\l_k(\rQ) \big\}\a= \rG\a \diag\big\{\L_1^{(1)}, \ldots, \L_k^{(1)}\big\}.
\end{equation}

\subsection{Structure of pole}

In accordance with representation (\ref{9.1}), the singularity of the operator $\big(\Op_{ex}(\e)-\e^2\l\big)^{-1}$ at $\e=0$ is determined by the sums over $j$ of the corresponding singular terms in (\ref{9.3}), (\ref{9.5}). Let us find these sums.

The condition $\L_j^{(1)}\ne0$ in (\ref{9.3}) is obviously satisfied only for $j\geqslant k_0+1$.
By formula (\ref{9.13}) we get
\begin{equation}\label{9.22}
\sum\limits_{j=k_0+1}^{k} \frac{\big(f,\vp_j^{(0)}\big)_{L_2(\g_{ex})}}{ \L_j^{(1)}}\vp_j^{(0)}=\sum\limits_{j=k_0+1}^{k}
\sum\limits_{i,l=1}^{k} \frac{\a_{ji}\overline{\a_{jl}}}{\L_j^{(1)}} (f,\tilde{\psi}^{(l)})_{L_2(\g_{ex})}\tilde{\psi}^{(i)}.
\end{equation}
The numbers $\sum\limits_{j=k_0+1}^{k}\frac{\a_{ji}\overline{\a_{jl}}}{\L_j^{(1)}}$ are the entries of the following matrix product
\begin{equation}\label{9.23}
\begin{pmatrix}
\a_{k_0+1 1} & \ldots & \a_{k1}
\\
\vdots && \vdots
\\
\a_{k_0+1 k} & \ldots & \a_{k k}
\end{pmatrix} \diag\big\{(\L_{k_0+1}^{(1)})^{-1},\ldots, (\L_k^{(1)})^{-1}\big\}
\begin{pmatrix}
\overline{\a_{k_0+1 1}} & \ldots &\overline{\a_{k_0+1 k}}
\\
\vdots && \vdots
\\
\overline{\a_{k1}} & \ldots & \overline{\a_{k k}}
\end{pmatrix}.
\end{equation}
Employing equation (\ref{9.27}) and the aforementioned orthonormality condition $\rG\a\a^*=\rI_k$, it is straightforward to confirm that matrix product (\ref{9.23}) is equal to $\diag\big\{\l_{k_0+1}^{-1}(\rQ),\ldots, \l_k^{-1}(\rQ)\big\}$. Hence, identity (\ref{9.22}) can be continued as
\begin{equation}\label{9.24}
\sum\limits_{j=k_0+1}^{k} \frac{\big(f,\vp_j^{(0)}\big)_{L_2(\g_{ex})}}{ \L_j^{(1)}}\vp_j^{(0)}=\sum\limits_{j=k_0+1}^{k}
 \frac{ (f,\tilde{\psi}^{(j)})_{L_2(\g_{ex})}}{\L_j^{(1)}} \tilde{\psi}^{(j)}.
\end{equation}

A similar sum comes from (\ref{9.5}):
\begin{equation}\label{9.25}
\sum\limits_{j=1}^{k_0} \frac{\big(f,\vp_j^{(0)}\big)_{L_2(\g_{ex})}} { \L_j^{(2)}-\l}\vp_j^{(0)}=\sum\limits_{j=1}^{k_0} \sum\limits_{i,l=1}^{k_0} \frac{\a_{ji}\overline{\a_{jl}}}{\L_j^{(2)}-\l} (f,\tilde{\psi}^{(l)})_{L_2(\g_{ex})}\tilde{\psi}^{(i)},
\end{equation}
where we have employed the first identities from (\ref{9.21}).
Here the numbers $\sum\limits_{j=1}^{k_0} \frac{\a_{ji}\overline{\a_{jl}}}{\L_j^{(2)}-\l}$ can be regarded as the entries of the following matrix product
\begin{equation*}
\b
\diag\big\{(\L_1^{(2)}-\l)^{-1},\ldots, (\L_{k_0}^{(2)}-\l)^{-1}\big\} \b^*,\quad \text{where}\quad \b:=\begin{pmatrix}
\a_{1 1} & \ldots & \a_{k_0 1}
\\
\vdots && \vdots
\\
\a_{1 k_0} & \ldots & \a_{k_0 k_0}
\end{pmatrix}.
\end{equation*}
We note that the numbers $\L_j^{(2)}$ are real, while the imaginary part of $\l$ is non-zero and this is why the diagonal matrix in the above product is well-defined.
In view of the orthonormalization condition $\a^*\rG\a=\rI_k$ and the first identities in (\ref{9.21}) we have $\b^*\rG_0\b=\rI_{k_0}$, $\rG_0=(\b^{-1})^*\b^{-1}$ and hence,
\begin{align*}
\b
\diag&\big\{(\L_1^{(2)}-\l)^{-1},\ldots, (\L_{k_0}^{(2)}-\l)^{-1}\big\} \b^* =\Big((\b^{-1})^*
\diag\big\{\L_1^{(2)}-\l,\ldots, \L_{k_0}^{(2)}-\l\big\} \b^{-1}
\Big)^{-1}
\\
=& \Big((\b^{-1})^*
\diag\big\{\L_1^{(2)},\ldots, \L_{k_0}^{(2)}\big\} \b^{-1}-\l \rG_0
\Big)^{-1}
\end{align*}
and the matrix $\rX_0:=(\b^{-1})^*
\diag\big\{\L_1^{(2)},\ldots, \L_{k_0}^{(2)}\big\} \b^{-1}$ is obviously self-adjoint. Therefore, it follows from (\ref{9.25}) that
\begin{equation*}
\sum\limits_{j=1}^{k_0} \frac{\big(f,\vp_j^{(0)}\big)_{L_2(\g_{ex})}} { \L_j^{(2)}-\l}\vp_j^{(0)}= \sum\limits_{i=1}^{k_0} \cC^{(-2)}_i(f) \tilde{\psi}^{(i)},
\end{equation*}
where the functionals $\cC^{(-2)}_i$ are defined in (\ref{4.83}). Substituting the obtained identity, (\ref{9.24}) into (\ref{9.3}), (\ref{9.5}) and emplyoing then representation (\ref{9.1}), we arrive at representation (\ref{4.81}), (\ref{4.82}), (\ref{4.83}). This completes the proof of Lemma~\ref{lmHge}.

\section{Analyticity of resolvent}\label{ss:Th1}

In this section we prove Theorem~\ref{th1}. Our main idea is to reduce the equation for the resolvent, that is, $(\Op_\e-\l)u_\e=f$, to a linear system of equations and to analyze then the dependence of its solution on $\e$. Since our graph $\G_\e$ consists of two subgraphs $\G$ and $\g_\e$ containing respectively finite and small edges, it is natural to consider the function $u_\e$ as a solution to appropriate boundary value problems on these two graphs and then to match these solutions on the edges $\ed_i$, $i\in J_j$, $j=1,\ldots,n$, incident to the vertices $M_j$.

Let $f_\G\in L_2(\G)$, $f_\g\in L_2(\g)$ be two arbitrary functions. In terms of these functions we introduce one more function $f\in L_2(\G_\e)$ as $f:=f_\G$ on $\G$ and $f:=\cS_\e f_\g$ on $\g_\e$.
We let $u_\e:=(\Op_\e-\l)^{-1}$ and we see immediately that
the restriction of the function $u_\e$ to the graph $\G$, that is, the function $W_\e:=\cP_\G u_\e$, solves the boundary value problem for the equation
\begin{equation}\label{3.13}
(\hat{\Op}(\e)-\l)W_\e=f_\G\quad\text{on}\quad\G
\end{equation}
subject to homogeneous vertex conditions (\ref{2.9}) at all vertices $M\in\G$ except for $M_0$, while at the vertex $M_0$ an inhomogeneous vertex condition
\begin{equation}\label{3.14}
\cU_{M_0}(W_\e)=\ra(\e),\qquad \ra(\e):=
\begin{pmatrix}
a_1(\e)
\\
\vdots
\\
a_{d_0}(\e)
\end{pmatrix}
\end{equation}
holds with some constants $a_i=a_i(\e)$, $i=1,\ldots,d_0$. These constants are, of course, given by the left hand side in (\ref{3.14}), but we treat them as unknown. If we find them, then we can recover the function $u_\e$ on the subgraph $\G$ as a solution to the above boundary value problem.

Similarly, we consider the restriction $\cP_{\g_\e} u_\e$ of the function $u_\e$ to the graph $\g_\e$ and we rescale it by means of $\cS_\e$ letting $w_\e:=\cS_\e^{-1}\cP_{\g_\e} u_\e$. The function $w_\e$ solves the equation $(\hat{\Op}_{ex}(\e)-\e^2\l) w_\e=\e^2 f_\g$ on $\g$ and satisfies vertex conditions (\ref{5.2}). Our next aim is to continue the function $w_\e$ on the edges $\ed_i^{ex}$ so that the continuation solves certain boundary value problem again written in terms of $\hat{\Op}_{ex}(\e)$ and vertex condition (\ref{5.4}). In view of the latter vertex condition,
we denote
\begin{equation*}
a'_i(\e):=
\sp_i(\e)\frac{d w_\e\big|_{\ed_i}}{d\xi_i}(M_j)-\iu\e \sq_i(\e)w_\e\big|_{\ed_i}(M_j),\qquad \ra'(\e):=\begin{pmatrix}
a_1'(\e)
\\
\vdots
\\
a_{d_0}'(\e)
\end{pmatrix},
\end{equation*}
where $w_\e\big|_{\ed_i}$ is the restriction of $w_\e$ to an edge $\ed_i$, $i\in J_j$, incident to a vertex $M_j$, $j=1,\ldots,n$. We also observe that $w_\e\big|_{\ed_i}(M_j)=a_i(\e)$, where $a_i$ are exactly the constants introduced in (\ref{3.14}).

We continue the function $w_\e$ on the edges $\ed_i^{ex}$ as follows:
\begin{equation*}
w_\e(x_i):=a_i(\e)\phi_i (\xi_i,\e)+ a_i'(\e)\phi_i'(\xi_i,\e),
\end{equation*}
where $\xi_i$ is a variable on the edge $\ed_i^{ex}$
 and
\begin{align*}
&\phi_i(t,\e):=
e^{\iu \frac{\e\sq_i(\e)}{\sp_i(\e)}t}
\cos \frac{\e\tau_i(\e)}{\sp_i(\e)}t,
\qquad
\phi_i'(t,\e):= \frac{e^{\iu\frac{\e\sq_i(\e)}{\sp_i(\e)}t}}
{\e\tau_i(\e)}\sin \frac{\e\tau_i(\e)}{\sp_i(\e)}t,
\\
&\tau_i(\e):= \sqrt{\l\sp_i(\e)+ \sq_i^2(\e)},
\end{align*}
where the branch of the square root is fixed arbitrarily. Then it is straightforward to confirm that after such continuation the function $w_\e$
 solves
a desired boundary value problem
\begin{equation}\label{3.20}
\begin{gathered}
(\hat{\Op}_{ex}(\e)-\e^2\l) w_\e
=\e^2 \chi_\g f_\g\quad\text{on}\quad \g_{ex},
\\
\Pi_{\G,M_0}\cU_{ex}'(w_\e)-\iu\e\Th_{\G,M_0} \cU_{ex}(w_\e)
=\rL_{\exp}
(\e) \big(\rL_{\cos}(\e)\ra'(\e)-\e\rL_{\tau}(\e) \rL_{\sin}(\e) \ra(\e)\big),
\end{gathered}
\end{equation}
with vertex conditions (\ref{5.2}), where $\chi_\g$ is the characteristic function of the graph $\g$, that is, $\chi_\g=1$ on $\g$ and $\chi_\g=0$ on $\ed_i^{ex}$, $i=1,\ldots,d_0$, and
\begin{align*}
&\rL_{\exp}(\e):=\diag \left\{\exp\left(\iu\frac{\e\sq_i(\e)} {\sp_i(\e)}\right)\right\}_{i=1,\ldots,d_0}, &&
\rL_{\cos}(\e):=\diag \left\{\cos \frac{\e\tau_i(\e)} {\sp_i(\e)}\right\}_{i=1,\ldots,d_0},
\\
&\rL_{\tau}(\e):=\diag \big\{\tau_i(\e)\big\}_{i=1,\ldots,d_0}, &&
\rL_{\sin}(\e):=\diag \left\{\sin \frac{\e\tau_i(\e)} {\sp_i(\e)}\right\}_{i=1,\ldots,d_0}.
\end{align*}

Since the functions $W_\e$ and $w_\e$ are obtained from the restrictions of the same function $u_\e$ to the graphs $\G$ and $\g_\e$, certain continuity conditions at the vertices are to be satisfied. Namely, for each vertex $M_j$, $j=1,\ldots,n$, and each incident edge $\ed_i$, $i\in J_j$, the restrictions of the functions $W_\e$ and $w_\e$ to the edge $\ed_i$
should have coincinding values at $M_j$ and a similar condition should hold for their derivatives. In view of formula (\ref{3.39}) this condition can be equivalently rewritten as
\begin{equation}\label{9.28}
\begin{gathered}
W_\e\big|_{\ed_i}(M_j)=w_\e\big|_{\ed_i}(M_j),
\\
\sp_i(\e)\frac{d W_\e\big|_{\ed_i}}{dx_i}(M_j)-\iu \sq_i(\e)W_\e\big|_{\ed_i}(M_j)
=\e^{-1} \left(\sp_i(\e)\frac{d w_\e\big|_{\ed_i^{ex}}}{d\xi_i}(M_j) -\iu\e\sq_i(\e) w_\e\big|_{\ed_i^{ex}}(M_j)
\right)
\end{gathered}
\end{equation}
for all $i\in J_j$, $j=1,\ldots,n$. Both functions $W_\e$ and $w_\e$ are to be treated as solutions to boundary values problems (\ref{3.13}), (\ref{2.9}), (\ref{3.14}) and (\ref{3.20}), (\ref{5.2}). Their solutions are determined by the constants $a_i$, $a_i'$ and by the functions $f_\G$, $f_\g$. It is clear that once we find explicitly the dependence of these solutions on $a_i$ and $a_i'$ and substitute the result into (\ref{9.28}), we end up with an inhomogeneous linear system of equations for the vectors $\ra(\e)$ and $\ra'(\e)$. If we succeed then to solve it and to prove that the solution is analytic in $\e$, then we shall recover the function $W_\e$ and $w_\e$ as the solutions to the above problems and this will prove Theorem~\ref{th1}.

As a first step in the above plan of the proof, we need to analyze the structure of the solutions to problems (\ref{3.13}), (\ref{2.9}), (\ref{3.14}) and (\ref{3.20}), (\ref{5.2}) and this will be done in the following subsection.

\subsection{Analysis of problems for $W_\e$ and $w_\e$}

We first introduce   an auxiliary operator  on the graph $\G$ corresponding to the boundary value problem for $W_\e$. This is the operator with the differential expression $\hat{\Op}(\e)$ considered on $\G$
subject to vertex conditions (\ref{2.9}) at vertices $M\in\G$, $M\ne M_0$ and to the Dirichlet condition
\begin{equation}\label{4.7a}
\cU_{M_0}(u)=0\quad\text{at vertex}\quad M_0.
\end{equation}
We denote such operator by $\Op_\G(\e)$. By
Lemma~\ref{lm7.1}  the operator $\Op_\G$ is self-adjoint in $L_2(\G)$ on the domain formed by the functions from $\dH^2(\G)$
satisfying the imposed vertex conditions.
The next lemma describes its dependence on $\e$.

\begin{lemma}\label{lmHGe}
For each $\l\in\mathds{C}$ with $\IM\l\ne0$ the resolvent $(\Op_\G(\e)
-\l)^{-1}$
is well-defined for all sufficient small $\e$ and is analytic in $\e$ as an operator from $L_2(\G)$ into $\dH^2(\G)$.
\end{lemma}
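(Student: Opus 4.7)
The plan has two parts: self-adjointness of $\Op_\G(\e)$ and analyticity of its resolvent. For self-adjointness I would simply invoke Lemma~\ref{lm7.1}. At each vertex $M\ne M_0$ in $\G$, the matrices $\rA_M(\e)$, $\rB_M(\e)$ satisfy the rank condition (\ref{2.9rank}) and the self-adjointness of the matrix (\ref{2.4a}) directly by the standing assumptions on $\Op_\e$. At $M_0$, the Dirichlet condition (\ref{4.7a}) corresponds to the choice $\rA_{M_0}=\rI_{d_0}$, $\rB_{M_0}=0$; the rank condition (\ref{7.3}) is trivial and the matrix in (\ref{7.4}) is the zero matrix, trivially self-adjoint. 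Lemma~\ref{lm7.1} therefore yields self-adjointness of $\Op_\G(\e)$ for all sufficiently small $\e$, and the resolvent $(\Op_\G(\e)-\l)^{-1}$ is well-defined for every $\l\in\mathds{C}\setminus\mathds{R}$.

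For analyticity the key observation is that $\G$ is fixed: no edge shrinks with $\e$, so this is a \emph{regular} perturbation and the strategy used for $\Op_{ex}(\e)$ in Subsection~5.1 (adapting \cite[Thm.~3.5]{BK}) applies. My plan is to reduce $\Op_\G(\e)$ to an analytic family of type~(A) on a fixed domain via a change of unknown. Concretely, I would construct an isomorphism $T(\e):\dH^2(\G)\to\dH^2(\G)$, analytic in $\e$ in the operator norm, with $T(0)=\rI$ and mapping $\mathrm{Dom}(\Op_\G(0))$ onto $\mathrm{Dom}(\Op_\G(\e))$. The construction is local near each vertex: at $M\ne M_0$ one rewrites the vertex conditions in the equivalent form (\ref{4.16}) using the transforming functions $\rS_M(\e)$ and projectors $\rP_M(\e)$, $\rP_{M,\bot}(\e)$ introduced in Section~6.3, each of which is analytic in $\e$ by Lemma~\ref{lm4.2a} and the Kato theory for eigenprojectors of an analytic unitary family. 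Combining these with smooth cut-off functions around each vertex yields $T(\e)$; at $M_0$ the Dirichlet condition is $\e$-independent so no action is required there.

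With this $T(\e)$, the conjugated family $\widetilde{\Op}_\G(\e):=T^{-1}(\e)\Op_\G(\e)T(\e)$ acts on the fixed domain $\mathrm{Dom}(\Op_\G(0))$ and its differential expression has coefficients analytic in $\e$ in $\Hinf^1(\G)\oplus L_2(\G)$, with the uniform ellipticity bound $p_\G(\,\cdot\,,\e)\geqslant c_\Op/2$. It is therefore an analytic family of type~(A) in the sense of \cite[Ch.~VII, Sect.~2]{Kato}, so its resolvent at any fixed $\l\in\mathds{C}\setminus\mathds{R}$ is analytic in $\e$ as an operator on $L_2(\G)$ for $\e$ small enough. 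Conjugating back by $T(\e)$ transfers this to $(\Op_\G(\e)-\l)^{-1}$ on $L_2(\G)$. The upgrade to analyticity as an operator from $L_2(\G)$ into $\dH^2(\G)$ then follows from $\e$-uniform elliptic regularity: the graph norm $\|\Op_\G(\e)u\|_{L_2(\G)}+\|u\|_{L_2(\G)}$ controls $\|u\|_{\dH^2(\G)}$ with a constant independent of small $\e$, again by the uniform ellipticity and the analyticity of the coefficients. The main technical obstacle is the construction of $T(\e)$ when the rank $r(M)$ of $\rB_M(\e)$ varies with the vertex; this is precisely what the machinery of Section~6.3 — the rewriting (\ref{4.16}) together with the analytic projectors $\rP_M(\e)$, $\rP_{M,\bot}(\e)$ and the transforming function (\ref{3.8a}) — is designed to handle, after which the remainder of the argument is standard analytic perturbation theory.
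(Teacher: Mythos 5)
Your argument is correct, but it is a genuinely different route from the paper's. The paper disposes of this lemma in one sentence: the self-adjointness is obtained from Lemma~\ref{lm7.1} exactly as you do, and the analyticity is simply \emph{cited} --- it ``is implied by Theorem~3.5 in \cite{BK} extended to our general operator; such extension is possible thanks to the remark after Equation (2) in \cite{BK}.'' No transforming isomorphism is constructed and no type~(A) reduction appears anywhere in the text for this operator. What you propose is a self-contained proof of the fact the paper outsources: build an analytic family $T(\e)=\rI+O(\e)$ of isomorphisms of $\dH^2(\G)$ intertwining the $\e$-dependent Lagrangian planes of boundary data at each vertex (which exist by the rank condition (\ref{2.9rank}) and the analyticity of $\rA_M(\e)$, $\rB_M(\e)$, realized on functions via traces and cut-offs), conjugate to a fixed domain, and invoke Kato. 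This buys transparency --- it makes explicit where the rank condition and the analytic projectors of Section~6.3 enter --- at the cost of length; the paper's citation buys brevity at the cost of leaving the extension to variable $p_\G$, $q_\G$ and first-order terms to the reader. One small imprecision in your write-up: the conjugated family $T^{-1}(\e)\Op_\G(\e)T(\e)$ is not literally a differential operator with analytic coefficients, since $T(\e)$ is not a multiplication operator but identity plus a trace-correction supported near the vertices; what you actually need (and have) is that it is a closed operator on the fixed domain $\mathrm{Dom}(\Op_\G(0))$ with $\e\mapsto T^{-1}(\e)\Op_\G(\e)T(\e)u$ analytic for each $u$ in that domain, which is all that type~(A) requires. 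The remaining steps --- uniform resolvent bound $|\IM\l|^{-1}$ from self-adjointness, and the upgrade to the $\dH^2(\G)$ operator norm via the $\e$-uniform equivalence of the graph norm and the $\dH^2$-norm --- are sound.
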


This lemma is implied by Theorem~3.5 in \cite{BK} extended to our general operator; such extension is possible thanks to the remark after Equation (2) in \cite{BK}.

In order to treat inhomogeneous boundary condition (\ref{3.14}), we consider
 boundary value problems for the equation
\begin{equation}\label{3.15}
(\hat{\Op}(\e)-\l)v_{\G,i}^{(\e)}=0 \quad\text{on}\quad\G,\qquad i=1,\ldots,d_0,
\end{equation}
subject to homogeneous vertex conditions (\ref{2.9}) at all vertices $M\in\G$ except for $M_0$ and to an inhomogeneous condition
\begin{equation}\label{3.16}
\cU_{M_0}(v_{\G,i}^{(\e)})=
\begin{pmatrix}
0 & \ldots & 1 &\ldots & 0
\end{pmatrix}^t,\qquad i=1,\ldots,d_0, \quad \text{at vertex}\quad M_0,
\end{equation}
where in the vector in the right hand side one stands only at $i$th position. Similar to the proof of Lemma~\ref{lmHg1}, problem (\ref{3.15}), (\ref{2.9}), (\ref{3.16}) can be transformed to one with an inhomogeneous equation and the homogeneous boundary condition. Then we apply Lemma~\ref{lmHGe} and we see that problems (\ref{3.15}), (\ref{2.9}), (\ref{3.16}) are uniquely solvable and their solutions $v_{\G,i}^{(\e)}$ are analytic in $\e$ in the sense of the norm in the space $\dH^2(\g_{ex})$.

In view of the
boundary value problem (\ref{3.13}), (\ref{2.9}), (\ref{3.14}), we conclude that
\begin{equation}\label{3.17}
\cR_\G(\e,\l)(f_\G,f_\g)=W_\e =(\Op_\G(\e)-\l)^{-1}f_\G+\sum\limits_{i=1}^{d_0}
a_i(\e)
v_{\G,i}^{(\e)}.
\end{equation}

In order to solve problem (\ref{3.20}), (\ref{5.2}), we introduce auxiliary boundary value problems on the graph $\g_{ex}$:
\begin{equation}\label{3.21}
(\hat{\Op}_{ex}(\e)-\e^2\l)v_{\g,\e}^{(i)}=0\quad\text{on}\quad \g_{ex},\qquad
\Pi_{\G,M_0}(\e)\cU'_{ex}(v_{\g,\e}^{(i)})-\iu\e \Th_{\G,M_0}(\e)\cU_{ex} (v_{\g,\e}^{(i)})
=\tilde{\varPsi}^{(i)},
\end{equation}
where $i=1,\ldots,d_0$,
with vertex conditions (\ref{5.2}), where
\begin{equation*}
\tilde{\varPsi}^{(i)}:=\sum\limits_{i=1}^{k} y_{ij}\varPsi^{(j)}=
\cU_\g(\tilde{\psi}^{(j)}),\quad i=1,\ldots,k,\qquad \tilde{\varPsi}^{(i)}:=\varPsi^{(i)},\quad i=k+1,\ldots,d_0.
\end{equation*}

 The solvability of these problems and the dependence of the solutions on the parameter $\e$ are described in the following lemma.

\begin{lemma}\label{lm4.9}
For each $\l\in\mathds{C}\setminus\mathds{R}$
problems (\ref{3.21}), (\ref{5.2}) are uniquely solvable provided $\e$ is small enough. The solutions are meromorphic in $\e$ and satisfy the following representations:
\begin{align}\label{4.87a}
&v_{\g,\e}^{(i)}= \e^{-2}\sum\limits_{j=1}^{k_0}
\big((\rX_0-\l\rG_0)^{-1}\big)_{ji}
\tilde{\psi}^{(j)} +\e^{-1}
\tilde{v}_{\g,\e}^{(i)},\qquad i=1,\ldots,k_0,
\\
&v_{\g,\e}^{(i)}=\e^{-1} \bigg(\l_i^{-1}(\rQ) \tilde{\psi}^{(i)}  +\sum\limits_{j=1}^{k_0} \Ups_{ji}
\tilde{\psi}^{(j)} \bigg)
+\tilde{v}_{\g,\e}^{(i)}, \qquad i=k_0+1,\ldots,k,\label{4.87b}
\\
&v_{\g,\e}^{(i)}=\e^{-1}\sum\limits_{j=1}^{k_0} \Ups_{ji} \tilde{\psi}^{(j)}
+\tilde{v}_{\g,\e}^{(i)}, \qquad i=k+1,\ldots,d_0.\label{4.87c}
\end{align}
Here the functions $\tilde{v}_{\g,\e}^{(i)}$ are analytic in $\e$ in $\dH^2(\g)$-norm, $\big((\rX_0-\l\rG_0)^{-1}\big)_{ji}$ is the entry of the matrix $(\rX_0-\l\rG_0)^{-1}$ at $j$th column and $i$th row, while $\Ups_{ji}$ are some constants.
\end{lemma}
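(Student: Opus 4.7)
The plan is to reduce problem~(\ref{3.21}), (\ref{5.2}) to an operator equation governed by the resolvent $(\Op_{ex}(\e)-\e^2\l)^{-1}$ and to read off the required expansions directly from Lemma~\ref{lmHge}. First I would construct, for each $i=1,\ldots,d_0$, an analytic-in-$\e$ lifting $u_{0,\e}^{(i)}\in\dH^2(\g_{ex})$ satisfying the inhomogeneous boundary datum in~(\ref{3.21}) at the vertices $M_l^{ex}$ and the homogeneous vertex conditions~(\ref{5.2}) at every other vertex; such a lifting is produced explicitly on each edge $\ed_l^{ex}$ as an affine function matching the prescribed datum multiplied by a cut-off supported near $M_l^{ex}$. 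Substituting $v_{\g,\e}^{(i)}=u_{0,\e}^{(i)}+\tilde w_\e^{(i)}$ turns (\ref{3.21}), (\ref{5.2}) into
\begin{equation*}
(\Op_{ex}(\e)-\e^2\l)\tilde w_\e^{(i)}=f_\e^{(i)},\qquad f_\e^{(i)}:=-(\hat{\Op}_{ex}(\e)-\e^2\l)u_{0,\e}^{(i)},
\end{equation*}
with $\tilde w_\e^{(i)}$ in the domain of $\Op_{ex}(\e)$. Uniqueness is automatic since $\e^2\l\notin\spec(\Op_{ex}(\e))$, and Lemma~\ref{lmHge} gives both existence and the meromorphic representation $\tilde w_\e^{(i)}=\e^{-2}\cA_{-2}(\l)f_\e^{(i)}+\e^{-1}\cA_{-1}f_\e^{(i)}+\cA_0(\e,\l)f_\e^{(i)}$.

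The core computation is the evaluation of the functionals $\cC^{(-2)}_j$ and $\cC^{(-1)}_j$ on $f_\e^{(i)}$. Expanding $f_\e^{(i)}=f_0^{(i)}+\e f_1^{(i)}+\ldots$ and integrating by parts twice, precisely as in the proof of Lemma~\ref{lmHg1}, using that each $\tilde\psi^{(l)}$ solves~(\ref{2.20}) and that $u_{0,\e}^{(i)}$ obeys the homogeneous vertex conditions at every $M\in\g_\infty$, only the boundary term at $M_l^{ex}$ survives, and at $\e=0$ it yields
\begin{equation*}
\big(f_0^{(i)},\tilde\psi^{(l)}\big)_{L_2(\g_{ex})}=\big(\tilde\varPsi^{(i)},\cU_\g(\tilde\psi^{(l)})\big)_{\mathds{C}^{d_0}}.
\end{equation*}
By orthonormality~(\ref{4.32}) and the definition of $\tilde\varPsi^{(i)}$ for $i>k$, this pairing equals $\d_{il}$ whenever $1\leqslant i,l\leqslant k$ and vanishes whenever $i>k$ and $l\leqslant k$.

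Plugging these pairings into the explicit formulae~(\ref{4.82})--(\ref{4.83}) and collecting coefficients of like powers of $\e$ then produces each of the three cases. For $i\leqslant k_0$ the identity $\cC^{(-2)}_j(f_0^{(i)})=((\rX_0-\l\rG_0)^{-1})_{ji}$ reproduces exactly the $\e^{-2}$-coefficient in~(\ref{4.87a}); everything else is absorbed into the analytic function $\tilde v_{\g,\e}^{(i)}$ multiplied by $\e^{-1}$. For $k_0<i\leqslant k$ the $\e^{-2}$-part vanishes, while the diagonal block $\rQ_1^{-1}$ of $\cC^{(-1)}_j$ gives the coefficient $\l_i^{-1}(\rQ)$ at $\tilde\psi^{(i)}$, the remaining $\e^{-1}$-contributions all lying in $\mathrm{span}\{\tilde\psi^{(j)}\}_{j\leqslant k_0}$ and being gathered into $\Ups_{ji}$, yielding~(\ref{4.87b}). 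For $i>k$ all pairings vanish, so both the $\e^{-2}$-piece and the $j>k_0$ part of the $\e^{-1}$-piece are zero, leaving only the $\Ups_{ji}$-combination of~(\ref{4.87c}).

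The only delicate point will be the bookkeeping at the $\e^{-1}$ level: the $\hat\psi^{(j)}$-term in $\cA_{-1}$ acting on $f_0^{(i)}$, the off-diagonal block $\rX_1(\l)$ of $\cC^{(-1)}_j$, and the contribution $\cA_{-2}(\l)f_1^{(i)}$ coming from the next term in the expansion of $f_\e^{(i)}$ all produce vectors in $\mathrm{span}\{\tilde\psi^{(j)}\}_{j\leqslant k_0}$, as do the unspecified functionals $\cC^{(-1)}_j$ for $j\leqslant k_0$ noted in Remark~\ref{rm6.1}. One has to verify that these sources really consolidate into a single set of constants $\Ups_{ji}$ and that the arbitrariness in the choice of the lifting $u_{0,\e}^{(i)}$ does not affect the explicit coefficients in~(\ref{4.87a})--(\ref{4.87c}); no cancellation is required, and the argument is ultimately a careful collection of coefficients at like powers of $\e$.
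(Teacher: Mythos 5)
Your proposal is correct and follows essentially the same route as the paper: the paper likewise constructs a lifting $v_{bnd}^{(i)}$ satisfying the vertex conditions (with the extra normalization $\cU_{ex}(v_{bnd}^{(i)})=0$, which makes the integration by parts give the exact identity $(g,\tilde{\psi}^{(j)})_{L_2(\g_{ex})}=\d_{ij}+\e^2\l(v_{bnd}^{(i)},\tilde{\psi}^{(j)})_{L_2(\g_{ex})}$ rather than only its value at $\e=0$), and then reads off (\ref{4.87a})--(\ref{4.87c}) from Lemma~\ref{lmHge}. The bookkeeping you flag as delicate is resolved exactly as you suggest, since for $i>k_0$ the coefficients $\cC^{(-2)}_j(f_0^{(i)})$ multiplying $\hat{\psi}^{(j)}$ vanish and the remaining order-$\e^{-1}$ contributions are all multiples of $\tilde{\psi}^{(j)}$, $j\leqslant k_0$.
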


\begin{proof} As in the proof of Lemma~\ref{lmHg1}, we first construct a function $v_{bnd}^{(i)}\in \dC^2(\g)$ satisfying vertex conditions in (\ref{5.2}) and (\ref{3.21}); in addition we assume that $\cU_{ex}(v_i^{bnd})=0$.
Then we 
let $v_{\g,i}^{(\e)}=v_{bnd}^{(i)}+\tilde{v}_i$ and for a new unknown function $\tilde{v}_i$ we get the equation
\begin{equation*}
\big(\Op_{ex}(\e)-\e^2\l\big) \tilde{v}_i=g,\qquad
g:=
-\big(\hat{\Op}_{ex}(\e)-\e^2\l\big) v_i^{bnd}.
\end{equation*}
We apply Lemma~\ref{lmHge} to the obtained equation for $\tilde{v}_i$ and conclude that it is uniquely solvable for each $\l\in\mathds{C}\setminus\mathds{R}$ provided $\e$ is small enough. Hence, the same is true for problem (\ref{3.21}), (\ref{5.2}). We also apply representations
(\ref{4.81}), (\ref{4.82}), (\ref{4.83}) to the problem for $\tilde{v}_i$ and this implies formulae (\ref{4.87a}), (\ref{4.87b}), (\ref{4.87c}) 
owing to the above discussed vertex conditions for $v_i^{bnd}$ and
the following identities based on a simple integration by parts:
\begin{align*}
& (g,\tilde{\psi}^{(j)})_{L_2(\g_{ex})}= -\big((\hat{\Op}_{ex}(\e)-\e^2\l)v_i^{bnd}, \tilde{\psi}^{(j)}\big)_{L_2(\g_{ex})}
\\
&\hphantom{(g,\tilde{\psi}^{(j)})_{L_2(\g_{ex})}} =\big(\Pi_{\G,M_0} \cU_{ex}'(v_i^{bnd})-2\iu\e\Th_{\G,M_0}\cU_{ex} (v_i^{bnd}),\tilde{\varPsi}^{(j)}
 \big)_{\mathds{C}^{d_0}}+\e^2\l(v_i^{bnd}, \tilde{\psi}^{(j)})_{L_2(\g_{ex})}
 \\
 &\hphantom{(g,\tilde{\psi}^{(j)})_{L_2(\g_{ex})}}=\d_{ij} + \e^2\l(v_i^{bnd}, \tilde{\psi}^{(j)})_{L_2(\g_{ex})},
\qquad i=1,\ldots,d_0,\quad j=1,\ldots,k,
\end{align*}
The proof is complete.
\end{proof}

In view of the definition of the operator $\Op_{ex}(\e)$ and the functions $v_{\g,\e}^{(i)}$ and in view of problem (\ref{3.20}), (\ref{5.2})
 we conclude that
\begin{equation}\label{3.39}
\cR_\g(\e,\l)(f_\G,f_\g)=w_\e =\e^2(\Op_{ex}(\e)-\e^2\l)^{-1}\chi_\g f_\g+ \sum\limits_{i=1}^{d_0}
b_i(\e)
v_{\g,\e}^{(i)}
\end{equation}
where the numbers $b_i$
are defined as
\begin{equation*}
\rb(\e)=
\begin{pmatrix}
b_1(\e)
\\
\vdots
\\
b_{d_0}(\e)
\end{pmatrix}:= \tilde{\Psi}^*\rL_{\exp}(\e) \big(
 \rL_{\cos}(\e)\ra'(\e)-\e\rL_{\tau}(\e) \rL_{\sin}(\e) \ra(\e)\big).
\end{equation*}

In order to rewrite identities (\ref{9.28}) to a system of linear equations, we introduce three auxiliary $d_0\times d_0$ matrices:
\begin{equation}
\begin{gathered}
\rT_\G(\e):=
\begin{pmatrix}
T_{\G,11}(\e) & \ldots & T_{\G,d_01}(\e)
\\
\vdots & & \vdots
\\
T_{\G,1d_0}(\e) & \ldots & T_{\G,d_0d_0}(\e)
\end{pmatrix},
\qquad
\rT_\g(\e):=
\begin{pmatrix}
T_{\g,11}(\e) & \ldots & T_{\g,d_01}(\e)
\\
\vdots & & \vdots
\\
T_{\g,1d_0}(\e) & \ldots & T_{\g,d_0d_0}(\e)
\end{pmatrix},
\\
\rT_\g'(\e):=
\begin{pmatrix}
T_{\g,11}'(\e) & \ldots & T_{\g,d_01}'(\e)
\\
\vdots & & \vdots
\\
T_{\g,1d_0}'(\e) & \ldots & T_{\g,d_0d_0}'(\e)
\end{pmatrix},
\end{gathered}\label{3.26}
\end{equation}
with the entries
\begin{equation}\label{3.25a}
\begin{aligned}
&T_{\G,li}(\e):=\sp_i(\e)\frac{d v_{\G,\e}^{(l)}\big|_{\ed_i}}{dx_i}(M_0) -\iu\sq_i(\e)v_{\G,\e}^{(l)}\big|_{\ed_i}(M_0), \qquad
i,l=1,\ldots,d_0,
\\
&T_{\g,li}(\e):=v_{\g,\e}^{(l)}\big|_{\ed_i^{ex}}(M_j), \qquad T_{\g,li}'(\e):=\sp_i(\e)\frac{d v_{\g,\e}^{(l)}\big|_{\ed_i^{ex}}}{d\xi_i}(M_j) -\iu\e\sq_i(\e)
v_{\g,\e}^{(l)}\big|_{\ed_i^{ex}}(M_j),
\end{aligned}
\end{equation}
where $i\in J_j$, $j=1,\ldots,n$, $l=1,\ldots,d_0$.

 In view of the definition of the matrices $\rT_\G$, $\rT_\g$, $\rT_\g'$ in (\ref{3.26}), (\ref{3.25a}), the continuity conditions (\ref{9.28})
can be rewritten as two systems of linear equations:
\begin{equation}\label{3.27-1}
\begin{aligned}
 \ra-\rT_\g
 \rb
 =& \e^2\cU_\g\big((\Op_{ex}-\e^2\l)^{-1}\chi_\g f_\g\big),
\\
\e\rT_\G\ra -\rT_\g'
\rb
=&
\e^2\Pi_{\G,M_0}\cU_\g' \big((\Op_{ex}-\e^2\l)^{-1} \chi_\g f_\g\big)
\\
&-\iu\e^3\Th_{\G,M_0} \cU_\g\big((\Op_{ex}-\e^2\l)^{-1}\chi_\g f_\g\big)
-\e\Pi_{\G,M_0}
\cU_{M_0}' \big((\Op_\G-\l)^{-1}f_\G\big)
\end{aligned}
\end{equation}
where we do not indicate explicitly the dependence of the matrices and operators on $\e$ to simplify the writing and
\begin{equation*}
\cU_\g'(u):=\left(
\frac{du\big|_{\ed_i^{ex}}}{d\xi_i}
(M_j^{ex})
\right)_{i\in J_j,\ j=1,\ldots,n}
\end{equation*}
We know apriori that the resolvent $(\Op_\e-\l)^{-1}$ is well-defined. Hence, the above system of linear equations is uniquely solvable. Our next step is to study the dependence of its solution on $\e$ and this will be done in the next subsection.

\subsection{Solution of linear system of equations}

In order to study the solution of system (\ref{3.27-1}), we shall employ certain properties of the matrices involved in this system. These properties are established in the following auxiliary lemma.

\begin{lemma}\label{lmT}
The matrices $\rT_\G(\e)$ and $\rT_\g'(\e)$ are analytic in sufficiently small $\e$,
while the matrix $\rT_\g(\e)$ is meromorphic in sufficiently small $\e$.
The leading terms of the Laurent series of the matrix $\rT_\g$
is as follows:
\begin{align}\label{3.28a}
&\rT_\g(\e)=\e^{-2}\rT_{-2} +\e^{-1}\rT_{-1} +\rT_0(\e),
\\
\label{3.28b}
&\rT_{-2}:=
\begin{pmatrix}
\tilde{\Psi}_0(\rX_0-\l\rG_0)^{-1} & 0 & 0
\end{pmatrix},
\\
&\rT_{-1}:=
\begin{pmatrix}
\Phi & \tilde{\Psi}_1 \rQ_1^{-1} & 0
\end{pmatrix} + \tilde{\Psi}_0\Ups
\label{3.28c}
\end{align}
where
$\tilde{\Psi}_0:=
\begin{pmatrix}
\tilde{\varPsi}_1 & \ldots & \tilde{\varPsi}_{k_0}
\end{pmatrix}$,
$\tilde{\Psi}_1=
\begin{pmatrix}
\tilde{\varPsi}_{k_0+1} & \ldots & \tilde{\varPsi_k}
\end{pmatrix}$, the symbols
$\Phi$ and $\Ups$ denote some matrices of sizes $d_0\times k_0$ and $k_0\times d_0$, respectively, while $\rT_0(\e)$
is an analytic in $\e$ matrix.
The identities hold:
\begin{align}
&\rT_\g'= \rL_{\exp}^{-1}
\rL_{\cos}^{-1}
\tilde{\Psi} +\e\rL_{\tau}\rL_{\sin}\rL_{\cos}^{-1} \rT_\g,\label{6.11a}
\\
&
\begin{aligned}
\Pi_{\G,M_0}\cU_\g'\big((\Op_{ex}-\e^2\l)^{-1}\chi_\g f_\g\big)
&-\iu\e\Th_{\G,M_0}\cU_\g\big((\Op_{ex}-\e^2\l)^{-1}\chi_\g f_\g\big)
\\
&=\e
\rL_{\tau}\rL_{\sin} \rL_{\cos}^{-1} \cU_\g\big((\Op_{ex}-\e^2\l)^{-1}\chi_\g f_\g\big).
\end{aligned}
\label{7.23}
\end{align}
\end{lemma}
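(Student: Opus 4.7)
The plan is to derive each assertion from (i) the already-known Sobolev regularity of the auxiliary families $v_{\G,\e}^{(l)}$ and $v_{\g,\e}^{(l)}$, together with (ii) an explicit integration of the ODE on each extension edge $\ed_i^{ex}$. I would first treat $\rT_\G$. Each problem (\ref{3.15}), (\ref{2.9}), (\ref{3.16}) reduces, by subtracting an auxiliary function carrying the prescribed boundary data (as in the proof of Lemma~\ref{lmHg1}), to an equation with homogeneous vertex conditions to which Lemma~\ref{lmHGe} applies; hence $v_{\G,\e}^{(l)}$ is analytic in $\e$ in $\dH^2(\G)$-norm. By the continuous embedding $\dH^2 \hookrightarrow \dC^1$, the traces and first derivatives appearing in (\ref{3.25a}) depend analytically on $\e$, and multiplication by the analytic coefficients $\sp_i(\e), \sq_i(\e)$ preserves this, giving analyticity of $\rT_\G$. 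For $\rT_\g$, Lemma~\ref{lm4.9} supplies the Laurent expansions (\ref{4.87a})--(\ref{4.87c}) of $v_{\g,\e}^{(l)}$ in $\dH^2(\g_{ex})$-norm; evaluating at $M_j$ on $\ed_i^{ex}$ and using that $\tilde\psi^{(j')}$ is constant on $\ed_i^{ex}$ with value $[\tilde\varPsi^{(j')}]_i$, one reads off the blocks of $\rT_{-2}$ and $\rT_{-1}$ column by column: the columns for $l \leqslant k_0$ contribute $\tilde\Psi_0(\rX_0-\l\rG_0)^{-1}$ at $\e^{-2}$ and a $d_0 \times k_0$ matrix $\Phi$ (formed by the values of $\tilde v_{\g,0}^{(l)}$ at the $M_j$) at $\e^{-1}$; the columns for $k_0 < l \leqslant k$ contribute $\tilde\Psi_1\rQ_1^{-1}$ plus a part of $\tilde\Psi_0\Ups$ at $\e^{-1}$; and the columns for $l > k$ contribute only the corresponding part of $\tilde\Psi_0\Ups$ at $\e^{-1}$. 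The analytic remainder $\rT_0(\e)$ collects the regular parts $\tilde v_{\g,\e}^{(l)}$.

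The core of the proof is the ODE analysis on each edge $\ed_i^{ex}$, where the differential expression reads $-\sp_i(\e)\partial_{\xi_i}^2 + 2\iu\e\sq_i(\e)\partial_{\xi_i}$. Direct differentiation shows that the functions $\phi_i(\,\cdot\,,\e)$ and $\phi_i'(\,\cdot\,,\e)$ from Section~\ref{ss:Th1} are two solutions of the homogeneous equation $(\hat\Op_{ex}(\e) - \e^2\l)v = 0$ on $\ed_i^{ex}$ with the normalizations $\phi_i(0,\e) = 1$, $(\sp_i\partial_{\xi_i}\phi_i - \iu\e\sq_i\phi_i)|_{\xi_i = 0} = 0$, $\phi_i'(0,\e) = 0$, $(\sp_i\partial_{\xi_i}\phi_i' - \iu\e\sq_i\phi_i')|_{\xi_i = 0} = 1$. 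Hence every solution $v$ of the homogeneous equation on $\ed_i^{ex}$ admits the unique decomposition $v = A\phi_i + B\phi_i'$ with $A = v|_{\ed_i^{ex}}(M_j)$ and $B = \sp_i v'|_{\ed_i^{ex}}(M_j) - \iu\e\sq_i v|_{\ed_i^{ex}}(M_j)$; applied to $v = v_{\g,\e}^{(l)}$ this identifies $A, B$ with the entries of $\rT_\g$ and $\rT_\g'$, respectively. Substituting $A\phi_i + B\phi_i'$ into the boundary condition at $\xi_i = 1$ from (\ref{3.21}) and using $\sp_i(\iu\e\sq_i/\sp_i) = \iu\e\sq_i$ and $\sp_i(\e\tau_i/\sp_i) = \e\tau_i$, all cross terms cancel and the condition collapses to
\[
e^{\iu\e\sq_i(\e)/\sp_i(\e)}\Bigl(B\cos\tfrac{\e\tau_i(\e)}{\sp_i(\e)} - \e\tau_i(\e)\, A\sin\tfrac{\e\tau_i(\e)}{\sp_i(\e)}\Bigr) = \tilde\varPsi_i^{(l)}.
\]
Solving for $B$ gives the componentwise form of (\ref{6.11a}). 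The first summand of (\ref{6.11a}) is manifestly analytic in $\e$, and the diagonal factor $\e\rL_\tau\rL_{\sin}\rL_{\cos}^{-1}$ is of order $\e^2$ since $\sin(\e\tau_i/\sp_i) = O(\e)$; this absorbs the at most second-order pole of $\rT_\g$ and yields analyticity of $\rT_\g'$.

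Finally, (\ref{7.23}) is proved by the same device applied to $u := (\Op_{ex}(\e) - \e^2\l)^{-1}\chi_\g f_\g$: on each $\ed_i^{ex}$ the source $\chi_\g f_\g$ vanishes and the boundary condition at $M_i^{ex}$ is homogeneous, so the decomposition $u = A_i\phi_i + B_i\phi_i'$ now satisfies $B_i = \e\tau_i\tan(\e\tau_i/\sp_i)\, A_i$ with $A_i = u(M_j)$ and $B_i = \sp_i u'(M_j) - \iu\e\sq_i u(M_j)$; in matrix form this is exactly (\ref{7.23}). The main obstacle in the argument is concentrated in this single explicit computation at $\xi_i = 1$, where one must verify the cancellation of cross terms between $A$- and $B$-contributions with correct bookkeeping of the factors $\sp_i, \sq_i, \tau_i$ and of the exponential factor $e^{\iu\e\sq_i/\sp_i}$; once this is cleanly executed, every statement of the lemma follows from the analytic/meromorphic structure furnished by Lemmas~\ref{lmHGe} and~\ref{lm4.9}.
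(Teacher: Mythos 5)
Your proposal is correct and follows essentially the same route as the paper: analyticity of $\rT_\G$ from Lemma~\ref{lmHGe}, the Laurent structure of $\rT_\g$ read off from Lemma~\ref{lm4.9} using that the $\tilde\psi^{(j)}$ are constant on the $\ed_i^{ex}$, and identities (\ref{6.11a}), (\ref{7.23}) from explicitly solving the homogeneous ODE on each extension edge (the paper parametrizes the solution via $\phi_i(\,\cdot\,-1,\e)$ normalized at the outer endpoint $M_i^{ex}$, formula (\ref{7.15}), while you use the basis normalized at $M_j$ — the same basis used to define the continuation $w_\e$ — but the resulting componentwise relation and the cancellation bookkeeping are identical). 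Your observation that the $O(\e^2)$ factor $\e\rL_\tau\rL_{\sin}\rL_{\cos}^{-1}$ absorbs the second-order pole of $\rT_\g$ is exactly how the paper concludes analyticity of $\rT_\g'$.
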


\begin{proof}
The analyticity in $\e$ of the matrix $\rT_\G(\e)$ is implied by its definition and the analyticity in $\e$ of the functions $v_{\G,i}^{(\e)}$. The meromorphic dependence of the matrix $\rT_\g$ on $\e$ and formulae (\ref{3.28b}), (\ref{3.28c}) for the first term of its Laurent series is a direct implication of Lemma~\ref{lm4.9}. This lemma also yields that the matrix $\rT_\g'$ is meromorphic in $\e$.

On the edges $\ed_i^{ex}$, the equation for $v_{\g,\e}^{(i)}$ in (\ref{3.21}) can be solved explicitly. Taking into consideration the vertex condition in (\ref{3.21}), we find that on each edge $\ed_i^{ex}$ the function $v_{\g,\e}^{(l)}$, $l=1,\ldots,d_0$, is given by the following formula:
\begin{equation}\label{7.15}
v_{\g,\e}^{(l)}(\xi)=\Psi_{li}\phi_i'(\xi_i-1,\e) +
\frac{\phi_i(\xi-1,\e)}{\phi_i(-1,\e)} \Big(
v_{\g,\e}^{(l)}\big|_{\ed_i^{ex}}(M_j) -\Psi_{li}\phi_i'(-1,\e)
\Big),\qquad i\in J_j,\quad j=1,\ldots,n,
\end{equation}
where $\Psi_{li}$ are the components of the vectors $\tilde{\varPsi}_l$.
By straightforward calculations we then find that
\begin{align*}
\Pi_{\G,M_0} \cU_\g'(v_{\g,\e}^{(l)})-\iu\e
\Th_{\G,M_0} \cU_\g(v_{\g,\e}^{(l)})=&\rL_{\exp}^{-1}
\big(\rL_{\cos} +\rL_{\sin}^2\rL_{\cos}^{-1}
\big) \tilde{\varPsi}_l
+\e\rL_{\tau}\rL_{\sin}\rL_{\cos}^{-1}
\cU_\g(v_{\g,\e}^{(l)})
\\
=&\rL_{\exp}^{-1}\rL_{\cos}^{-1}
 \tilde{\varPsi}_l
+\e\rL_{\tau}\rL_{\sin}\rL_{\cos}^{-1}
\cU_\g(v_{\g,\e}^{(l)}).
\end{align*}
This identity implies formula (\ref{6.11a}). Then it follows from this formula and expansion (\ref{3.28a}) that the matrix $\rT_\g'(\e)$ is in fact analytic in $\e$.

Identity (\ref{7.23}) can be proved in the same way as (\ref{6.11a}) by employing the following formula similar to (\ref{7.15}):
\begin{equation*}
v_\e(\xi)=\frac{\phi_i(\xi-1,\e)}{\phi_i(-1,\e)} v_\e\big|_{\ed_i^{ex}}(M_j)\quad\text{on}\quad \ed_i^{ex},\quad i\in J_j,\quad j=1,\ldots,n,\qquad v_\e:=\big(\Op_{ex}(\e)-\e^2\l\big)^{-1}\chi_\g f_\g.
\end{equation*}
The proof is complete.
\end{proof}

Identities (\ref{6.11a}), (\ref{7.23}) allow us to simplify system (\ref{3.27-1}). Namely, we multiply the first equation by $\e\rL_{\tau}\rL_{\sin} \rL_{\cos}^{-1}$ and deduct the result from the second equation. This equation is then transformed as follows:
\begin{equation}\label{9.29}
\e\big(\rT_\G-\rL_{\tau}\rL_{\sin} \rL_{\cos}^{-1}\big)\ra -
\rL_{\exp}^{-1}
\rL_{\cos}^{-1}
\tilde{\Psi}
\rb=-\e\Pi_{\G,M_0}
\cU_{M_0}' \big((\Op_\G-\l)^{-1}f_\G\big).
\end{equation}
Since for small $\e$ we have
\begin{equation*}
\rL_{\exp}(\e)=\rI_{d_0}+O(\e),\qquad \rL_{\cos}(\e)=\rI_{d_0}+O(\e^2),\qquad \rL_{\sin}(\e)=O(\e),
\end{equation*}
we can express the vector $
\rb$ from (\ref{9.29}):
\begin{equation}\label{9.31}
\rb= \e \tilde{\Psi}^*\big(\rT_\G(0)+\e\rZ_1\big)\ra +\e \tilde{\Psi}^* \rZ_0
\cU_{M_0}' \big((\Op_\G-\l)^{-1}f_\G\big).
\end{equation}
where
 $\tilde{\Psi}:=\begin{pmatrix} \tilde{\varPsi}_1 & \ldots & \tilde{\varPsi}_{d_0}
\end{pmatrix}$, while
$\rZ_0=\rZ_0(\e)$, $\rZ_1=\rZ_1(\e)$ are analytic in $\e$ matrices and
\begin{equation}\label{9.33}
\rZ_0(\e)
=
\Pi_{\G,M_0}(0)+
O(\e).
\end{equation}
We substitute formula (\ref{9.31}) into the first equation in (\ref{3.27-1}) and employ identities (\ref{3.28a}), (\ref{3.28b}), (\ref{3.28c}). Then we multiply the resulting equation by $\e \tilde{\Psi}^*$
and this leads us to the following equation for the vector $\ra$:
\begin{equation}\label{9.34}
\e \tilde{\Psi}^*\ra- \tilde{\Psi}^*\rT_{-2} \tilde{\Psi}^* \big(\rT_\G(0)+\e \rZ_1\big)\ra-
\e \tilde{\Psi}^*\rT_{-1}\tilde{\Psi}^*\rT_\G(0) \ra + \e^2\rZ_2\ra= \e\tilde{\Psi}^*\rh,
\end{equation}
where $\rZ_2=\rZ_2(\e)$ is an analytic in $\e$ matrix and $\rh=\rh(\e)$ is a meromorphic in $\e$ vector defined as
\begin{equation}\label{9.35}
\rh(\e):=\e^2\cU_\g\big((\Op_{ex}(\e)-\e^2\l)^{-1}\chi_\g f_\g\big) + \e\rT_\g(\e)\tilde{\Psi}^*\rZ_0(\e)
\cU_{M_0}' \big((\Op_\G(\e)-\l)^{-1}f_\G\big).
\end{equation}
In order to study the obtained equation, we shall need the following simple statement.
\begin{lemma}
The matrix $\rT_\G(0)$ is invertible.
\end{lemma}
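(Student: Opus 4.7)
The plan is to argue by contradiction via a uniqueness-type argument for a self-adjoint operator with modified vertex conditions at $M_0$. Suppose that $\rT_\G(0)\rc=0$ for some non-zero vector $\rc=(c_1,\ldots,c_{d_0})^t\in\mathds{C}^{d_0}$. I form the linear combination
\begin{equation*}
w:=\sum\limits_{l=1}^{d_0} c_l\, v_{\G,0}^{(l)} \in \dH^2(\G).
\end{equation*}
By linearity of problems (\ref{3.15}), (\ref{3.16}) and the homogeneous vertex conditions at $M\ne M_0$, the function $w$ satisfies $(\hat\Op(0)-\l)w=0$ on $\G$, fulfills vertex conditions (\ref{2.9}) at all $M\ne M_0$, and has $\cU_{M_0}(w)=\rc$. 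By the very definition of the entries $T_{\G,li}(0)$ in (\ref{3.25a}), the identity $\rT_\G(0)\rc=0$ reads componentwise as
\begin{equation*}
\Pi_{\G,M_0}(0)\cU_{M_0}'(w)-\iu\Th_{\G,M_0}(0)\cU_{M_0}(w)=0,
\end{equation*}
which is a Neumann/Robin-type vertex condition at $M_0$.

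Next I introduce an auxiliary operator $\tilde{\Op}_\G(0)$ acting as $\hat\Op(0)$ on $\G$, subject to the original vertex conditions at $M\ne M_0$ and subject at $M_0$ to the above condition, i.e.\ with matrices $\rA_{M_0}:=-\iu\Th_{\G,M_0}(0)$ and $\rB_{M_0}:=\Pi_{\G,M_0}(0)$. The goal is to apply Lemma~\ref{lm7.1} at $M_0$ to conclude self-adjointness. The rank condition (\ref{7.3}) holds trivially because $\Pi_{\G,M_0}(0)$ is diagonal with strictly positive entries by the ellipticity assumption, so $\rank\rB_{M_0}=d_0$. A direct computation shows that the matrix in (\ref{7.4}) at $M_0$ is $-\iu\Th_{\G,M_0}(0)+\iu\Th_{\G,M_0}(0)=0$, which is trivially self-adjoint. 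At the other vertices the hypotheses of Lemma~\ref{lm7.1} already hold by assumption on the original operator $\Op_\e$ at $\e=0$. Hence $\tilde{\Op}_\G(0)$ is self-adjoint in $L_2(\G)$.

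The function $w$ then lies in the domain of $\tilde{\Op}_\G(0)$ and satisfies $(\tilde{\Op}_\G(0)-\l)w=0$, so it is an eigenfunction associated with the non-real eigenvalue $\l$; self-adjointness forces $w\equiv 0$. Consequently $\rc=\cU_{M_0}(w)=0$, a contradiction. Therefore $\rT_\G(0)$ is invertible.

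I expect the only genuinely non-trivial step to be the verification that the modified vertex condition at $M_0$ defines a self-adjoint operator via Lemma~\ref{lm7.1}; this amounts to checking the rank and self-adjointness conditions (\ref{7.3}), (\ref{7.4}), which reduces to straightforward matrix algebra once the correct identifications $\rA_{M_0}=-\iu\Th_{\G,M_0}(0)$, $\rB_{M_0}=\Pi_{\G,M_0}(0)$ are made.
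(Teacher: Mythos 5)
Your proof is correct and follows essentially the same route as the paper: both form the linear combination $w=\sum_l c_l v_{\G,0}^{(l)}$, observe that $\rT_\G(0)\rc=0$ turns the data at $M_0$ into a homogeneous Robin/Neumann-type condition, and conclude $w\equiv0$ because the non-real $\l$ cannot be an eigenvalue of the resulting self-adjoint operator. The only difference is that you explicitly verify the self-adjointness of the auxiliary operator via Lemma~\ref{lm7.1} (rank condition and vanishing of the matrix in (\ref{7.4})), a step the paper asserts without detail; this is a welcome addition rather than a divergence.
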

\begin{proof}
If the matrix $\rT_\G(0)$ degenerates, then its columns are linearly dependent with some coefficients $C_i$, $i=1,\ldots,d_0$.
In view of formula (\ref{3.25a}) for $T_\G^{(ip)}$ and problems (\ref{3.15}), (\ref{2.9}), we see immediately that then the corresponding linear combination $v:=\sum\limits_{i=1}^{d_0} C_i v_{\G,i}^{(0)}$ also solves problem (\ref{3.15}), (\ref{2.9}) but satisfies the homogeneous Neumann condition at $M_0$. Since the parameter $\l$ is non-real, it can not be an eigenvalue of a self-adjoint operator on $\G$ with differential expression $\hat{\Op}(0)$
subject to vertex conditions (\ref{2.34}) and to the Neumann condition at $M_0$. Hence, the function $v$ vanishes identically and in view of boundary conditions (\ref{3.16}) we see that $C_i=0$, $i=1,\ldots,d_0$.
The proof is complete.
\end{proof}

The proven lemma implies that the matrix $\rT_\G(0)+\e\rZ_1(\e)$ is invertible and the inverse is analytic in $\e$. Then denoting
\begin{equation}\label{9.36}
\tilde{\ra}(\e):=\tilde{\Psi}^*
\big(\rT_\G(0)+\e\rZ_1(\e)\big)\ra(\e),
\end{equation}
we rewrite equation (\ref{9.34}) to
\begin{equation}\label{9.37}
\left(- \tilde{\Psi}^*\rT_{-2}-
\e \tilde{\Psi}^*\rT_{-1} +\e \tilde{\Psi}^*\rT_\G^{-1}(0)\tilde{\Psi} + \e^2\rZ_3\right)\tilde{\ra}= \e\tilde{\Psi}^*\rh,
\end{equation}
where $\rZ_3=\rZ_3(\e)$ is an analytic in $\e$ matrix. According formulae (\ref{3.28b}), (\ref{3.28c}), the matrices $\tilde{\Psi}^*\rT_{-2}$ and $\tilde{\Psi}^*\rT_{-1}$ are of the form
\begin{equation}\label{9.38}
\tilde{\Psi}^*\rT_{-2}=
\begin{pmatrix}
(\rX_0-\l_0\rG_0)^{-1} & 0
\\
0 & 0
\end{pmatrix},\qquad \tilde{\Psi}^*\rT_{-1}=
\begin{pmatrix}
\Phi_1 & 0
\\
\Phi_2 & \rQ_2
\end{pmatrix},\qquad \rQ_2:=
\begin{pmatrix}
\rQ_1^{-1} & 0
\\
0 & 0
\end{pmatrix}
\end{equation}
where the widths and heights of the blocks in $\tilde{\Psi}^*\rT_{-2}$ and $\tilde{\Psi}^*\rT_{-1}$ are $k_0$, $d_0-k_0$, and $\Phi_1$, $\Phi_2$ are some fixed matrices of sizes respectively $k_0\times k_0$, $(d_0-k_0)\times k_0$.
The widths and heights of the blocks in $\rQ_2$ are $k$, $d_0-k$.

In view of the above shown structure of the leading terms in the matrix of equation (\ref{9.37}), it is natural to seek its solution  as
\begin{equation}\label{9.39}
\tilde{\ra}(\e)=
\begin{pmatrix}
\tilde{\ra}_0(\e)
\\
\tilde{\ra}_1(\e)
\end{pmatrix},
\end{equation}
where $\tilde{\ra}_0$ is a vector of height $k_0$, while $\tilde{\ra}_1$ is a vector of height $k-k_0$. We also introduce two simple projectors on $\mathds{C}^{d_0}$
acting as

\begin{equation}\label{3.54}
\rR_0
\begin{pmatrix}
a_1
\\
\vdots
\\
a_{d_0}
\end{pmatrix}:=
\begin{pmatrix}
a_1
\\
\vdots
\\
a_{k_0}
\end{pmatrix},
\qquad
\rR_1
\begin{pmatrix}
a_1
\\
\vdots
\\
a_{d_0}
\end{pmatrix}
:=
\begin{pmatrix}
a_{k_0+1}
\\
\vdots
\\
a_{d_0}
\end{pmatrix}.
\end{equation}
It is obvious that $\tilde{\ra}_0=\rR_0 \tilde{\ra}$,
$\tilde{\ra}_1=\rR_1 \tilde{\ra}$.

We substitute representations (\ref{9.39}) into equation (\ref{9.37}) and apply then the projectors $\rR_0$ and $\rR_1$ to this equation. Then, in view of identities (\ref{9.38}), we immediately get:
\begin{equation}\label{9.40}
\begin{aligned}
&\big((\rX_0-\l_0\rG_0)^{-1}+\e\rZ_4\big)\tilde{\ra}_0 + \e\rZ_5 \tilde{\ra}_1(\e)=\e\rR_0 \tilde{\Psi}^*\rh,
\\
&\rQ_3 \tilde{\ra}_1(\e)+\rZ_6 \tilde{\ra}_0=\rR_1 \tilde{\Psi}^*\rh,\qquad \rQ_3:=\rR_1 \tilde{\Psi}^*\rT_\G^{-1}(0)\tilde{\Psi}\rR_1-\rQ_2,
\end{aligned}
\end{equation}
where $\rZ_4=\rZ_4(\e)$, $\rZ_5=\rZ_5(\e)$, $\rZ_6=\rZ_6(\e)$ are some analytic in $\e$ matrices of sizes respectively $k_0\times k_0$, $k_0\times (d-k_0)$, $(d-k_0)\times k_0$.

Since the matrix $(\rX_0-\l\rG_0)^{-1}$ is invertible, we can immediately solve the first equation in (\ref{9.40}):
\begin{equation}\label{9.41}
\tilde{\ra}_0(\e)=\e\rZ_8(\e)
\rR_0 \tilde{\Psi}^*\rh(\e) -\e\rZ_7(\e) \tilde{\ra}_1(\e),\qquad \rZ_8(0)=\rX_0-\l\rG_0,
\end{equation}
where $\rZ_7=\rZ_7(\e)$, $\rZ_8=\rZ_8(\e)$ are some analytic in $\e$ matrices of sizes $k_0\times(d_0-k_0)$.
We substitute the obtained identity into the second equation in (\ref{9.40}) and we get:
\begin{equation}\label{9.42}
(\rQ_3 +\e\rZ_9)\tilde{\ra}_1=\rR_1 \tilde{\Psi}^*\rh+
\e\rZ_{10} \rR_0 \tilde{\Psi}^*\rh,
\end{equation}
where $\rZ_9=\rZ_9(\e)$, $\rZ_{10}=\rZ_{10}(\e)$ are some analytic in $\e$ matrices. In order to solve the obtained equation, we shall employ the following simple lemma.

\begin{lemma}\label{lmQ3}
The matrix $\rQ_3$ is invertible.
\end{lemma}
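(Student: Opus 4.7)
The plan is to argue by contradiction using the self-adjointness of $\Op_0$ established in Subsection~\ref{sec6.1}: if $\rQ_3$ were singular, I would manufacture from a non-trivial element of its kernel a non-zero eigenfunction of $\Op_0$ at the non-real point $\l$, which is impossible.

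Concretely, suppose $\rQ_3\rx=0$ for some $\rx=(\rx_1,\rx_2)^t\in\mathds{C}^{d_0-k_0}\setminus\{0\}$ with $\rx_1\in\mathds{C}^{k-k_0}$, $\rx_2\in\mathds{C}^{d_0-k}$. Invertibility of $\rT_\G(0)$, which has just been established, allows me to define
$$
\ra^*:=\rT_\G^{-1}(0)\,\tilde{\Psi}\begin{pmatrix}0\\ \rx\end{pmatrix},\qquad W:=\sum_{l=1}^{d_0}a_l^*\, v_{\G,l}^{(0)},
$$
with $a_l^*$ the components of $\ra^*$. By the choice of $v_{\G,l}^{(0)}$ in (\ref{3.15}), (\ref{2.9}), (\ref{3.16}), the function $W\in\dH^2(\G)$ solves $(\hat{\Op}(0)-\l)W=0$, satisfies the homogeneous vertex conditions (\ref{2.9}) at $\e=0$ at every $M\neq M_0$, and carries $\cU_{M_0}(W)=\ra^*$. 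Moreover, the definition (\ref{3.25a}) of $T_{\G,li}(0)$ gives at once
$$
\Pi_{\G,M_0}(0)\cU_{M_0}'(W) - \iu\Th_{\G,M_0}(0)\cU_{M_0}(W) \;=\; \rT_\G(0)\ra^* \;=\;\tilde{\Psi}\begin{pmatrix}0\\ \rx\end{pmatrix}.
$$

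The decisive step is to verify that $W$ lies in $\Dom(\Op_0)$, i.e., that vertex condition (\ref{2.34}) at $M_0$ holds with the matrices from (\ref{2.32}). Using the factorization $\tilde{\Psi}=\Psi\cdot\diag(\rY,\rI_{d_0-k})$ implied by (\ref{4.31}) and the diagonalization (\ref{4.30}), the hypothesis $\rQ_3\rx=0$ — which asserts that the last $d_0-k_0$ components of $\tilde{\Psi}^*\ra^*$ equal $(\rQ_1^{-1}\rx_1,0)^t$ — unpacks into two statements about $\Psi^*\ra^*$: its last $d_0-k$ entries vanish, and applying $\rQ$ to its first $k$ entries reproduces the first $k$ components of $\Psi^*\bigl(\Pi_{\G,M_0}(0)\cU_{M_0}'(W)-\iu\Th_{\G,M_0}(0)\cU_{M_0}(W)\bigr)$. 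These two statements are exactly vertex condition (\ref{2.34}) with the matrices (\ref{2.32}) at $M_0$. Consequently $(\Op_0-\l)W=0$; self-adjointness of $\Op_0$ together with $\IM\l\neq 0$ forces $W\equiv 0$, so $\ra^*=\cU_{M_0}(W)=0$, and the identity $\tilde{\Psi}(0,\rx)^t=\rT_\G(0)\ra^*=0$ combined with the unitarity of $\tilde{\Psi}$ yields $\rx=0$, the desired contradiction.

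The principal obstacle is the block-algebraic verification in the previous paragraph. The matrix $\rQ_3$ amalgamates the boundary response $\rT_\G^{-1}(0)$ of the interior problem on $\G$ with the small-edge contribution $\rQ_2=\diag(\rQ_1^{-1},0)$ extracted from the Laurent expansion (\ref{3.28a})-(\ref{3.28c}) of $\rT_\g$. That these two rather different objects align so as to reproduce the block form (\ref{2.32}) of $\rA_{M_0}^{(0)}$, $\rB_{M_0}^{(0)}$ is not evident a priori; the matching hinges on using the rotation $\rY$ diagonalizing $\rQ$ consistently both in the construction of $\tilde{\psi}^{(j)}$ (and hence of $\rT_\g$) and in the $(k_0,\,k-k_0,\,d_0-k)$-tripartition of the target space of $\rR_0$, $\rR_1$, so that (\ref{4.30}) and $\rY^*=\rY^{-1}$ can be applied. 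Once the bookkeeping is done, all remaining manipulations come down to routine cancellations of the $\Th_{\G,M_0}$-contributions.
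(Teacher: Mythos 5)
Your proof is correct, but it takes a genuinely different route from the paper's. The paper stays entirely at the matrix level: integrating by parts on $\G$, it shows that $\rT_\G(0)-\iu\,\IM\l\,\rG_\G$ is self-adjoint, where $\rG_\G$ is the positive definite Gram matrix of the functions $v_{\G,i}^{(0)}$; then, for $\rc\in\Ker\rQ_3$, it evaluates $\big(\rR_1\rQ_2\rR_1\rc,\rc\big)_{\mathds{C}^{d_0}}$ in two ways --- the left-hand side is real because $\rQ_2$ is self-adjoint, while the right-hand side has imaginary part $\IM\l\,\big(\rG_\G\tilde{\rc},\tilde{\rc}\big)$ with $\tilde{\rc}:=\rT_\G^{-1}(0)\tilde{\Psi}\rR_1\rc$ --- and concludes $\tilde{\rc}=0$, hence $\rR_1\rc=0$, from $\IM\l\ne0$ and the positive definiteness of $\rG_\G$. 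You instead identify $\Ker\rQ_3$ with the eigenspace of the limiting operator $\Op_0$ at the non-real point $\l$ and invoke its self-adjointness from Subsection~\ref{sec6.1}. Your block verification does go through: with $\Psi^*\tilde{\Psi}=\diag(\rY,\rI_{d_0-k})$ and $\rQ=\rY\diag\{\l_1(\rQ),\ldots,\l_k(\rQ)\}\rY^*$, the relation $\rQ_3\rx=0$ translates exactly into condition (\ref{2.34}) with matrices (\ref{2.32}) for $W$; the vanishing of $\l_1(\rQ),\ldots,\l_{k_0}(\rQ)$ is precisely what absorbs the unknown first $k_0$ components of $\rY^*\Psi^*\ra^*$, and the unitarity of $\tilde{\Psi}$ closes the argument. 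Both proofs ultimately rest on the same two inputs --- $\IM\l\ne0$ and the self-adjointness of $\rQ$ (you through the self-adjointness of $\Op_0$, the paper through the reality of the eigenvalues $\l_i(\rQ)$ entering $\rQ_2$) --- but yours is the more conceptual one: it exhibits $\rQ_3$ as the characteristic matrix whose invertibility expresses that $\l$ is not an eigenvalue of $\Op_0$, whereas the paper's computation is more elementary and never needs to check that $W$ belongs to the domain of $\Op_0$.
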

\begin{proof}
We let $\e=0$ in (\ref{3.15}), multiply then the equation
by $\overline{v_{\G,j}^{(0)}}$ and integrate twice by parts over the graph $\G$ taking into consideration the vertex conditions for $v_{\G,i}^{(0)}$ and $v_{\G,j}^{(0)}$:
\begin{align*}
&0=\big((\hat{\Op}(0)-\l)v_{\G,i}^{(0)},v_{\G,j}^{(0)}\big)_{L_2(\G)}
\\
&=\big(\Pi_{\G,M_0}(0)\cU_{M_0}'(v_{\G,i}^{(0)})
-\iu\Th_{\G,M_0}(0)\cU_{M_0}(v_{\G,i}^{(0)}), \cU_{M_0}(v_{\G,j}^{(0)})\big)_{\mathds{C}^{d_0}}
\\
&\hphantom{=}- \big(\cU_{M_0}(v_{\G,i}^{(0)}),\Pi_{\G,M_0}(0)\cU_{M_0}'(v_{\G,j}^{(0)}) -\iu\Th_{\G,M_0}(0)\cU_{M_0}(v_{\G,j}^{(0)})\big)_{\mathds{C}^{d_0}}+
\big(v_{\G,i}^{(0)},(\hat{\Op}(0) -\overline{\l})v_{\G,j}^{(0)}\big)_{L_2(\G)}.
\end{align*}
Since
\begin{equation*}
\big(v_{\G,i}^{(0)},(\hat{\Op}(0) -\overline{\l})v_{\G,j}^{(0)}\big)_{L_2(\G)}=
-2\iu \IM\l\,
\big(v_{\G,i}^{(0)},v_{\G,j}^{(0)}\big)_{L_2(\G)},
\end{equation*}
by (\ref{3.16}), (\ref{3.25a}) we obtain:
\begin{equation*}
T_\G^{(ji)}(0)-\iu\IM\l \big(v_{\G,i}^{(0)},v_{\G,j}^{(\e)}\big)_{L_2(\G)} =\overline{T_\G^{(ij)}(0)-\iu\IM\l \big(v_{\G,i}^{(0)},v_{\G,j}^{(0)}\big)_{L_2(\G)}}.
\end{equation*}
Hence, the matrix $\rT_\G(0)-\iu\IM\l\, \rG_\G$ is self-adjoint, where $\rG_\G$ is the Gram matrix of the functions $v_{\G,i}^{(0)}$.
We also observe that the matrix $\rG_\G$ is positive definite.

Let $\rc\in\rR_1\tilde{\Psi}^*\mathds{C}^{d_0}$ be a vector such that $\rQ_3\rc=0$.
Then, in view of the definition of the matrix $\rQ_3$ in (\ref{9.40}),
\begin{align*}
\big(\rR_1\rQ_2\rR_1\rc,\rc\big)_{\mathds{C}^{d_0}}= &\big(\rR_1\tilde{\Psi}^*\rT_\G^{-1}(0) \tilde{\Psi}\rR_1\rc,\rc\big)_{\mathds{C}^{d_0}} = \big( \rT_\G(0)\tilde{\Psi}\rR_1 \rc,\tilde{\Psi}\rR_1 \rc\big)_{\mathds{C}^{d_0}}
\\
=&\big(\tilde{c}, \rT_\G(0)\tilde{\rc}\big)_{\mathds{C}^{d_0}} = \big(\tilde{c}, (\rT_\G(0)- \iu\IM\l\rG_\G)\tilde{\rc}\big)_{\mathds{C}^{d_0}} + \iu\IM\l\big(\tilde{c}, \rG_\G \tilde{\rc}\big)_{\mathds{C}^{d_0}},
\end{align*}
where
$\tilde{\rc}:=\rT_\G^{-1}(0)\tilde{\Psi}\rR_1\rc$, $\rR_1\rc=\tilde{\Psi}\rT_\G(0)\tilde{\rc}$. Since both matrices $\rQ_2$ and $(\rT_\G(0)-\iu\IM\l\rG_\G)$ are self-adjoint, $\IM\l\ne0$ and $\rG_\G$ is positive definite, we above identity implies immediately that
$\tilde{\rc}=0$ and $\rR_1\rc=0$. This completes the proof.
\end{proof}

The proven lemma allows us to solve equation (\ref{9.42}):
\begin{equation}\label{9.43}
\tilde{\ra}_1(\e)=\rQ_3^{-1}\rR_1 \tilde{\Psi}^*\rh(\e)+
\e\rZ_{11}(\e) \rR_0 \tilde{\Psi}^*\rh(\e),
\end{equation}
where $\rZ_{11}=\rZ_{11}(\e)$ is some analytic in $\e$ matrix. By this formula and (\ref{9.41}) we find the vector $\tilde{\ra}(\e)$ by (\ref{9.39}) and we recover then the vector $\ra(\e)$ by (\ref{9.36}). This determines vector $\rb(\e)$ by (\ref{9.31}) and then we can find functions $W_\e$ and $w_\e$ by (\ref{3.17}), (\ref{3.39}). In order to establish the analyticity of these functions, we first need to clarify the behavior of the vector $\tilde{\Psi}^*\rh(\e)$ as $\e\to+0$. Definition (\ref{9.35}) of the vector $\rh(\e)$ involves the matrix $\rT_\g(\e)$, which has a second order pole at $\e=0$ and this is why apriori we can only say that the vector $\rh(\e)$ is meromoprhic at $\e=0$. However, when we recover the vectors $\ra(\e)$ and $\rb(\e)$, this pole is eliminated and these vectors turn out to be analytic in $\e$. Namely, the following lemma holds true.

\begin{lemma}\label{lm-ab}
The vectors $\ra(\e)$ and $\rb(\e)$ are analytic in $\e$ as linear bounded functionals on $L_2(\G)\oplus L_2(\g)$. The identities hold true:
\begin{align}\label{9.44}
&
\begin{aligned}
\begin{pmatrix}
\tilde{\Psi}_0 & \tilde{\Psi}_1
\end{pmatrix}^* \Big(\rT_\G(0) \ra(0)&+\Pi_{\G,M_0}(0) \cU_{M_0}' \big((\Op_\G(0)-\l)^{-1}f_\G\big)
\Big)
\\
&=\diag\big\{\l_1(\rQ),\ldots,\l_k(\rQ)\big\}
\begin{pmatrix}
\tilde{\Psi}_0 & \tilde{\Psi}_1
\end{pmatrix}^*\ra(0),
\end{aligned}
\\
&\rb(0)=0, \qquad \rR_0 \frac{d\rb}{d\e}(0)=0.
\label{9.46}
\end{align}
\end{lemma}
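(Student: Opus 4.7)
My strategy combines the reduced pair \eqref{9.41}, \eqref{9.43} with the recovery formulas \eqref{9.36} and \eqref{9.31}. Although the vector $\rh(\e)$ defined in \eqref{9.35} has only a simple pole at $\e=0$ (from $\e\rT_\g(\e)=\e^{-1}\rT_{-2}+\rT_{-1}+O(\e)$), the block structure \eqref{9.38} gives $\rR_1\tilde{\Psi}^*\rT_{-2}=0$, so $\rR_1\tilde{\Psi}^*\rh(\e)$ is analytic at $\e=0$, while $\e\rR_0\tilde{\Psi}^*\rh(\e)$ is analytic for trivial reasons. Feeding this into \eqref{9.43} yields the analyticity of $\tilde{\ra}_1(\e)$, and then \eqref{9.41} yields that of $\tilde{\ra}_0(\e)$. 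Since $\rT_\G(0)$ is invertible (established just before \eqref{9.36}) and $\tilde{\Psi}$ is unitary, the matrix $\tilde{\Psi}^*\bigl(\rT_\G(0)+\e\rZ_1(\e)\bigr)$ is invertible with analytic inverse for small $\e$, so $\ra(\e)=\bigl(\tilde{\Psi}^*(\rT_\G(0)+\e\rZ_1(\e))\bigr)^{-1}\tilde{\ra}(\e)$ is analytic; plugging into \eqref{9.31} then gives $\rb(\e)$ analytic with $\rb(0)=0$, which is the first identity in \eqref{9.46}.

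For the identity \eqref{9.44}, differentiating \eqref{9.31} at $\e=0$ and using $\rZ_0(0)=\Pi_{\G,M_0}(0)$ produces
\begin{equation*}
\tfrac{d\rb}{d\e}(0)=\tilde{\Psi}^*\bigl(\rT_\G(0)\ra(0)+\Pi_{\G,M_0}(0)\cU_{M_0}'((\Op_\G(0)-\l)^{-1}f_\G)\bigr),
\end{equation*}
whose first $k$ components are the left-hand side of \eqref{9.44}. I would then express this quantity differently by computing the limits $\tilde{\ra}_0(0)$ and $\tilde{\ra}_1(0)$ directly: the former from \eqref{9.41} using $\e\rh(\e)\big|_{\e=0}=\rT_{-2}\tilde{\Psi}^*\Pi_{\G,M_0}(0)\cU_{M_0}'(\cdot)$ together with $\rZ_8(0)=\rX_0-\l\rG_0$; the latter from $\tilde{\ra}_1(0)=\rQ_3^{-1}\rR_1\tilde{\Psi}^*\rh(0)$, where $\rR_1\tilde{\Psi}^*\rh(0)$ receives contributions both from $\rR_1\tilde{\Psi}^*\rT_{-1}$ (containing the block $\rQ_1^{-1}$ by \eqref{9.38}) and from $\rR_1\tilde{\Psi}^*\cU_\g(\cA_{-2}\chi_\g f_\g)$, evaluated via \eqref{4.81}--\eqref{4.83}. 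Combining these with $\rR_k\tilde{\ra}(0)=(\tilde{\Psi}_0,\tilde{\Psi}_1)^*\rT_\G(0)\ra(0)$ coming from \eqref{9.36} at $\e=0$, the diagonal matrix $\diag\{\l_1(\rQ),\ldots,\l_k(\rQ)\}$ emerges on the right-hand side, with its first $k_0$ zero entries accounting for the degeneracy singled out by $k_0$ and its remaining entries arising from inverting $\rQ_1^{-1}$ through $\rQ_3$.

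The second identity $\rR_0\tfrac{d\rb}{d\e}(0)=0$ then follows immediately from \eqref{9.44}: its first $k_0$ rows amount to applying $\diag\{\l_1(\rQ),\ldots,\l_{k_0}(\rQ)\}$ to $(\tilde{\Psi}_0,\tilde{\Psi}_1)^*\ra(0)$, and this diagonal is zero by the definition of $k_0$. The main technical obstacle will be the middle step: carefully tracking the block decompositions of sizes $k_0$, $k-k_0$, $d_0-k$ as several sources — the analytic part of $\e\rT_\g$, the $\e$-derivative of $\tilde{\Psi}^*\rZ_0\cU_{M_0}'((\Op_\G-\l)^{-1}f_\G)$, and the $\e^2\cU_\g\bigl((\Op_{ex}-\e^2\l)^{-1}\chi_\g f_\g\bigr)$ term — all feed into $\rR_1\tilde{\Psi}^*\rh(0)$, and ensuring that the combined bookkeeping through $\rQ_3^{-1}$ produces precisely the eigenvalues $\l_j(\rQ)$ (rather than their reciprocals coming from $\rQ_1^{-1}$) on the right-hand side of \eqref{9.44}.
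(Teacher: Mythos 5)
Your route is the paper's route: the same pole cancellation via the block structure \eqref{9.38} (the pole of $\tilde{\Psi}^*\rh$ sits in the first $k_0$ rows, so $\rR_1$ kills it in \eqref{9.43} and the explicit factor $\e$ kills it in \eqref{9.41}), the same recovery of $\ra$ and $\rb$ through \eqref{9.36} and \eqref{9.31}, the same trivial observation that $\rb(0)=0$, and the same deduction of $\rR_0\frac{d\rb}{d\e}(0)=0$ from \eqref{9.44} because $\l_1(\rQ)=\ldots=\l_{k_0}(\rQ)=0$. All of that is correct and complete, and your reformulation of the left-hand side of \eqref{9.44} as the first $k$ components of $\frac{d\rb}{d\e}(0)$ is a clean way to see what is actually being claimed.

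The gap is \eqref{9.44} itself, which you present as a plan and explicitly leave open (``the main technical obstacle''). This identity is the substantive content of the lemma --- it is what forces the limiting vertex condition \eqref{2.32} at $M_0$ --- so listing the sources that feed into $\rR_1\tilde{\Psi}^*\rh(0)$ without combining them does not yet prove anything, and your worry about reciprocals versus eigenvalues is exactly the point that must be settled. The paper settles it by applying the projector onto the first $k$ components to the reduced equation \eqref{9.37} and to the expansion \eqref{9.45} of $\tilde{\Psi}^*\rh$: matching the lowest order gives \eqref{9.47}, i.e.\ $\tilde{\ra}_0(0)=\rR_0\tilde{\Psi}^*\rT_\G(0)\ra(0)=\tilde{\Psi}_0^*\Pi_{\G,M_0}(0)\cU_{M_0}'\big((\Op_\G(0)-\l)^{-1}f_\G\big)$, after which the unknown blocks $\Phi_1,\Phi_2$ drop out and one is left with an identity in which $\rQ_1^{-1}$ multiplies both $\rR_2\tilde{\Psi}^*\rT_\G(0)\ra(0)$ and $\rR_2\tilde{\Psi}^*\Pi_{\G,M_0}(0)\cU_{M_0}'\big((\Op_\G(0)-\l)^{-1}f_\G\big)$, the remaining term being $\rR_2\tilde{\Psi}^*\ra(0)$ coming from $\e\tilde{\Psi}^*\rT_\G^{-1}(0)\tilde{\Psi}\tilde{\ra}$ via \eqref{9.36}. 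Left-multiplying that identity by $\rQ_1=\diag\{\l_{k_0+1}(\rQ),\ldots,\l_k(\rQ)\}$ is precisely what turns the reciprocals into the eigenvalues on the right-hand side of \eqref{9.44}; rows $1,\ldots,k_0$ are supplied by \eqref{9.47}. Until you carry out this (short, but essential) computation, the lemma is not proved.
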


\begin{proof}
It follows from the definition of the vector $\rh(\e)$ in (\ref{9.35}), Lemmata~\ref{lmHge},~\ref{lmT} and identities (\ref{9.33}), (\ref{9.38}) that the vector $\rh$ is meromorphic in $\e$ and has a first order pole at $\e=0$:
\begin{equation}\label{9.45}
\begin{aligned}
\tilde{\Psi}^*\rh(\e)=&
\begin{pmatrix}
\e^{-1}(\rX_0-\l\rG_0)^{-1}+\Phi_1 & 0
\\
\Phi_2 & \rQ_2
\end{pmatrix} \tilde{\Psi}^* \Big(\Pi_{\G,M_0}(0) \cU_{M_0}'\big((\Op_\G(0)-\l)^{-1}f_\G\big)+O(\e)\Big)
\\
&+
\begin{pmatrix}
(\rX_0-\l\rG_0)^{-1}
\\
0
\end{pmatrix}
\begin{pmatrix}
(f,\tilde{\phi}^{(1)})_{L_2(\g)}
\\
\vdots
\\
(f,\tilde{\phi}^{(k_0)})_{L_2(\g)}
\end{pmatrix}+O(\e).
\end{aligned}
\end{equation}
We substitute this identity into (\ref{9.43}) and in view of the definition of the projectors $\rR_0$ and $\rR_1$ we see that the projector $\rR_1$ eliminates the pole and the vector $\ra_1(\e)$ is analytic.
Then we can find the vector $\tilde{\ra}_0$ by formula (\ref{9.41}):
\begin{equation}\label{9.47}
\tilde{\ra}_0(\e)=\rR_0 \tilde{\Psi}^* \rT_\G(0)\ra(0)= \tilde{\Psi}_0^* \Pi_{\G,M_0}(0) \cU_{M_0}'\big((\Op_\G(0)-\l)^{-1}f_\G\big)+O(\e).
\end{equation}
Now formula (\ref{9.43}) implies that the vector $\tilde{\ra}_1$ is also analytic in $\e$. Hence, it follows from (\ref{9.36}), (\ref{9.31}) that the vectors $\ra$ and $\rb$ are analytic in $\e$ as well and the first identity (\ref{9.46}) holds true. It is clear that $\ra(\e)$ and $\rb(\e)$ are bounded linear functionals on $L_2(\G)\oplus L_2(\g)$. We have just proved that these functionals are analytic on each elements of the mentioned space and therefore, they are also analytic in the norm sense.

In order to prove identity (\ref{9.44}), we introduce one more projector in $\mathds{C}^{d_0}$ similar to (\ref{3.54}):
\begin{equation*}
\rR_2
\begin{pmatrix}
a_1
\\
\vdots
\\
a_{d_0}
\end{pmatrix}=
\begin{pmatrix}
a_1
\\
\vdots
\\
a_k
\end{pmatrix},
\end{equation*}
and we apply this projector to (\ref{9.45}) and to equation (\ref{9.37}). Employing then identities (\ref{9.47}) and (\ref{9.36}), we get:
\begin{equation*}
-\rQ_1^{-1}\rR_2 \tilde{\Psi}^*\rT_\G(0)
\ra(0) + \rR_2 \tilde{\Psi}^*
\ra(0)=\rQ_1^{-1}\rR_2 \tilde{\Psi}^* \Pi_{\G,M_0}(0) \cU_{M_0}'\big((\Op_\G(0)-\l)^{-1}f_\G\big).
\end{equation*}
The above identity, (\ref{9.36}) with $\e=0$ and (\ref{9.47}) imply (\ref{9.44}). This identity and (\ref{9.31}), (\ref{9.33}) yield immediately the second identity in (\ref{9.46}). The proof is complete.
 \end{proof}

The proven lemma, formula (\ref{3.17}) and the analyticity of the resolvent $(\Op_\G(\e)-\l)^{-1}$ and the functions $v_{\G,\e}^{(i)}$ imply that the operator $\cR_\G(\e,\l)$ is analytic in $\e$ as a bounded operator from $L_2(\G)\oplus L_2(\g)$ into $\dH^2(\G)$. Employing identities (\ref{9.44}), (\ref{9.16}), (\ref{4.30}), (\ref{4.31}) and vertex conditions (\ref{3.16}), it is straightforward to confirm that the function $\cR_\G(0,\l)f_\G$ satisfies vertex conditions (\ref{2.34}), (\ref{2.32}). This proves the first identity in (\ref{2.6}).

In view of Lemma~\ref{lmHge}, the first term in the right hand side in (\ref{3.39}), the operator $\e^2(\Op_{ex}(\e)-\e^2\l)^{-1}\chi_\g f_\g$, is analytic in $\e$ as acting from $L_2(\g)$ into $\dH^2(\g)$. Owing to identities (\ref{9.46}) and Lemma~\ref{lm4.9}, each function $b_i(\e) v_{\g,\e}^{(i)}$, $i=1,\ldots,d_0$, is also analytic in $\e$ since the negative powers of $\e$ coming from $v_{\g,\e}^{(i)}$ are compensated by the positive powers in $b_i(\e)$. Hence, the sum in (\ref{3.39}) is also analytic as an operator from $L_2(\g)\oplus L_2(\G)$ into $\dH^2(\g)$. Therefore, the same is true for operator $\cR_\g(\e,\l)$ given by (\ref{3.39}). It also follows from identities (\ref{9.46}) and Lemma~\ref{lm4.9} that
$\cR_\g(0,\l)=\sum\limits_{i=1}^{k} C_i \psi^{(i)}$,
 where $C_i$ are some constants and in view of definition (\ref{3.26}), (\ref{3.25a}) of the matrix $\rT_\g$ and the first equation in (\ref{3.27-1}) we conclude immediately that $C_i=a_i(0)$, $i=1,\ldots,d_0$. Since by (\ref{3.17}) and by the first identity in (\ref{2.6}) we also have $\ra(0)=\Psi^*\cU_{M_0}\big((\Op_0-\l)^{-1}f_\G\big)$, we get the second identity in (\ref{2.6}).

Owing to the embeddings $\dH^2(\G)\subset \dC^1(\G)$ and $\dH^2(\g)\subset \dC^1(\g)$, the above proven analyticity in $\e$ of the operators $\cR_\G(\e,\l)$ and $\cR_\g(\e,\l)$ imply that they also bounded and analytic in $\e$ as acting into $\dC^1(\G)$ and $\dC^1(\g)$. We can express the second derivative of the functions $\cR_\G(\e,\l)(f_\G,f_\g)$ and $\cR_\g(\e,\l)(f_\G,f_\g)$ from their differential equations, see (\ref{3.13}), (\ref{3.20}), via these functions and their first derivatives. In view of the established analyticity in $\e$ of these functions in $\dC^1(\G)$ and $\dC^1(\g)$ and the assumed smoothness and analyticity in $\e$ of the functions $p_\G$, $q_\G$, $V_\G$ and $p_\g$, $q_\g$, $V_\g$, we conclude that the functions $\cR_\G(\e,\l)(f_\G,f_\g)$ and $\cR_\g(\e,\l)(f_\G,f_\g)$ are also analytic in the norms of the spaces $\dC^2(\G)$ and $\dC^2(\g)$. Therefore, the operators $\cR_\G(\e,\l)$ and $\cR_\g(\e,\l)$ are bounded and analytic in $\e$ as acting from $L_2(\G)\oplus L_2(\g)$ into $\dC^2(\G)$ and $\dC^2(\g)$.

\section*{Acknowledgments}

The author thanks a referee for valuable and useful remarks, which allowed to improve significantly the initial version of the paper.

\smallskip

\noindent The research is supported by the Russian Science Foundation
(grant no. 20-11-19995).

\end{document}